\newcommand{\map}[1]{\xrightarrow{#1}}
\newcommand{\iso}{\cong}
\newcommand{\define}{\stackrel{\mathrm{def}}{=}}
\newcommand{\Inv}{\mathrm{Inv}}
\newcommand{\kk}{{\bm{k}}}
\newcommand{\Hom}{\mathrm{Hom}}
\newcommand{\Aut}{\mathrm{Aut}}
\newcommand{\End}{\mathrm{End}}
\newcommand{\Spec}{\mathrm{Spec}}
\newcommand{\Spf}{\mathrm{Spf}}
\newcommand{\Z}{\mathbb Z}
\newcommand{\R}{\mathbb R}
\newcommand{\C}{\mathbb C}
\newcommand{\co}{\mathcal O}
\newcommand{\alg}{\mathrm{alg}}
\newcommand{\ord}{\mathrm{ord}}
\newcommand{\GL}{\mathrm{GL}}
\begin{document}
\author{Benjamin Howard and Qirui Li}
\title[Intersections in Lubin-Tate space]{Intersections in Lubin-Tate space and   biquadratic fundamental lemmas}
\date{}

\thanks{B.H.~was supported in part by NSF grants  DMS-1801905 and DMS-2101636}

\address{Department of Mathematics\\Boston College\\ 140 Commonwealth Ave. \\Chestnut Hill, MA 02467, USA}
\email{howardbe@bc.edu}

\address{Department of Mathematics\\University of Toronto\\ 40 St.~George St.\\Toronto, Ontario M5S 2E4 Canada }
\curraddr{Mathematisches Institut der Universit\"at Bonn, Endenicher Allee 60, D-53115 Bonn, Germany}
\email{qiruili@math.uni-bonn.de}

\begin{abstract}
We compute the intersection multiplicities of special cycles in  Lubin-Tate spaces, and formulate a new  
 arithmetic fundamental lemma  relating these intersections to derivatives of orbital integrals.
\end{abstract}

\maketitle

\setcounter{tocdepth}{1}
\tableofcontents

\theoremstyle{plain}
\newtheorem{theorem}{Theorem}[subsection]
\newtheorem{bigtheorem}{Theorem}[section]
\newtheorem{proposition}[theorem]{Proposition}
\newtheorem{lemma}[theorem]{Lemma}
\newtheorem{corollary}[theorem]{Corollary}
\newtheorem{conjecture}[theorem]{Conjecture}

\theoremstyle{definition}
\newtheorem{definition}[theorem]{Definition}
\newtheorem{hypothesis}[theorem]{Hypothesis}

\theoremstyle{remark}
\newtheorem{remark}[theorem]{Remark}
\newtheorem{example}[theorem]{Example}
\newtheorem{question}[theorem]{Question}

\numberwithin{equation}{subsection}
\renewcommand{\thebigtheorem}{\Alph{bigtheorem}}


\section{Introduction}


We compute the intersection multiplicity of special cycles in Lubin-Tate deformation spaces.
Interpolating ideas from \cite{HS, Li, Zhang},  we formulate a new arithmetic fundamental lemma expressing the intersection as the  derivative of an orbital integral.
The results and conjectures of \cite{Li} are special cases of those presented here.


\subsection{An intersection problem}
\label{ss:intro LT}


Fix a  nonarchimedean local field $F$ (of any characteristic), let $\breve{F}$ be the completion of its maximal unramified extension, and let $\kk$ be the residue field of $\co_{\breve{F}}$. 

Suppose   $\mathscr{G}$ is a one-dimensional formal $\co_F$-module of height $2h$ over $\kk$, and denote by $X \to \Spf(\co_{\breve{F}} )$  the Lubin-Tate deformation space of $\mathscr{G}$.  Being (noncanonically) isomorphic to the formal spectrum of a power series ring $\co_{\breve{F}} [[T_1,\ldots, T_{2h-1}]]$,  it is a formal scheme of dimension $2h$.

Let $K_1$ and $K_2$ be   separable quadratic field extensions   of $F$, and  fix $\co_F$-algebra maps
\begin{equation}\label{introLTpair}
\Phi_1 : \co_{K_1} \to \End_{\co_F}( \mathscr{G}) ,\qquad \Phi_2 : \co_{K_2} \to  \End_{\co_F}( \mathscr{G}).
\end{equation}
 The deformation space of $\mathscr{G}$ with its  extra $\co_{K_i}$-action is a closed formal subscheme $f_i : Y_i \hookrightarrow X$.

The  formal subschemes  $Y_1$ and $Y_2$ each have dimension $h$, and so it is natural to consider the   intersection multiplicity
\begin{equation}\label{intro intersection}
I(\Phi_1,\Phi_2) = \mathrm{len}_{\co_{\breve{F}} }\, H^0( X , f_{1*}\co_{Y_1} \otimes_{\co_X} f_{2*}\co_{Y_2}).
\end{equation}
In  this generality \eqref{intro intersection} could be infinite; this would be the case, for example, if $K_1=K_2$ and $\Phi_1=\Phi_2$.

One of the main results of this paper is a  formula for the above intersection multiplicity.  The formula involves a polynomial invariant  associated to the pair $(\Phi_1,\Phi_2)$, which we now describe.


\subsection{Invariant polynomials}
\label{ss:intro polynomial}


Suppose for the moment  that $K_1$ and $K_2$ are any quadratic \'etale $F$-algebras.  Equivalently, $K_i$ is either a quadratic Galois extension, or $K_i \iso F\times F$.
Denote by   $\sigma_i \in \Aut(K_i/F)$  the nontrivial automorphism.   

The automorphism group of the $F$-algebra 
$K= K_1\otimes_F K_2$  contains the Klein four subgroup
$
\{ \mathrm{id}, \tau_1,\tau_2,\tau_3\} \subset \Aut(K/F)
$
with nontrivial elements 
\begin{align*}
(x \otimes y)^{\tau_1}  = x \otimes y^{\sigma_2} ,\quad
( x \otimes y) ^{\tau_2} =  x^{\sigma_1} \otimes y ,\quad 
(x \otimes y) ^{\tau_3}  =  x^{\sigma_1} \otimes y^{\sigma_2} .
\end{align*}
Denote by $K_3\subset K$ the subalgebra  of elements fixed by $\tau_3$.  
The picture, along with generators of the automorphism groups, is 
\begin{equation}\label{biquadratic}
\xymatrix{
&  { K}  \\
{   K_1 }  \ar@{-}[ur]^{  \tau_1}  &     {   K_2   }  \ar@{-}[u]_{\tau_2} &  {  K_3  } \ar@{-}[ul]_{\tau_3  }  \\
&  {   F  . }  \ar@{-}[ul]^{\sigma_1}  \ar@{-}[u]_{\sigma_2} \ar@{-}[ur]_{\sigma_3}
}
\end{equation}

Elementary algebra shows that  $K_1\iso K_2$ if and only if $K_3\iso F\times F$.
If $K_1$ and $K_2$ are nonisomorphic field extensions, then $K$ is a biquadratic field extension of $F$, and $K_3$ is its unique third quadratic subfield.

Now suppose $B$ is any central simple $F$-algebra of dimension $4h^2$, and that we are given $F$-algebra embeddings 
\begin{equation}\label{intro12pair}
\Phi_1 : K_1 \to B ,\qquad \Phi_2:K_2 \to B.
\end{equation}
From this data we construct in \S \ref{ss:polynomial} a distinguished element $\mathbf{s}$ in the central simple $K_3$-algebra $C=B\otimes_F K_3$, and show that its reduced characteristic polynomial is a square.  The \emph{invariant polynomial}
\[
\Inv_{(\Phi_1,\Phi_2)} (T) \in K_3[T]
\]
is  its unique monic square root.

The pair $(\Phi_1,\Phi_2)$ is \emph{regular semisimple} (Definition \ref{def:rss}) if $\mathbf{s} \in C^\times$ and the subalgebra $K_3[\mathbf{s}] \subset C$ is an \'etale $K_3$-algebra of dimension $h$.  This is our way of making precise the notion that the pair $(\Phi_1,\Phi_2)$ is in general position; see also Proposition \ref{prop:truerss}.
 As evidence that the invariant polynomial is a natural quantity to consider, 
we prove in Corollary \ref{cor:conjugacy} that the $B^\times$-conjugacy class of a regular semisimple pair is completely determined by its invariant polynomial.


\subsection{The intersection formula}


Now return to the setting of \S \ref{ss:intro LT}, and apply the  constructions of \S \ref{ss:intro polynomial} to the central  division algebra 
\[
B=\End_{\co_F}(\mathscr{G}) \otimes F
\]
to  obtain a degree $h$ polynomial $\Inv_{(\Phi_1,\Phi_2)}$ with coefficients in $K_3$.

To any $\co_F$-algebra embeddings
\[
\Psi_1 : \co_{K_1} \to M_{2h}(\co_F) ,\qquad \Psi_2 : \co_{K_2} \to M_{2h}(\co_F),
\]
we may associate in the same way a degree $h$ polynomial $\Inv_{(\Psi_1,\Psi_2)}$ with coefficients in $K_3$. More generally,  for any $g\in \GL_{2h}(\co_F)$ we may form  the conjugate embedding $g \Psi_2 g^{-1}$ and  the polynomial $\Inv_{(\Psi_1, g \Psi_2 g^{-1})}$.
Let
\[
R(g) = \mathrm{Res}\big( \Inv_{(\Phi_1,\Phi_2)} , \Inv_{(\Psi_1, g \Psi_2 g^{-1})} \big) \in K_3
\]
be the resultant. 
It follows from Proposition \ref{prop:functional} that  $R(g)^2\in F$, and we  define \[|R(g)| = \sqrt{ | R(g)^2 |}.\]
 Here the absolute value on $F$ is  normalized by 
$| \pi | = q^{-1}$,   where $\pi \in \co_F$ is a uniformizing parameter and $q$ is the cardinality of the residue field.

\begin{bigtheorem}\label{thmA}
If   $(\Phi_1,\Phi_2)$ is regular semisimple then $|R(g)| \neq 0$ for all $g\in \GL_{2h}(\co_F)$, and   the intersection multiplicity \eqref{intro intersection}  satisfies
\[
I(\Phi_1,\Phi_2) = 
c(0)  \cdot    | d_1d_2| ^{  - \frac{ h^2}{2}}   \cdot  \int_{ \GL_{2h}(\co_F) }  \frac{ dg} {  | R(g) |  } .
\]
In particular,  the left hand side is finite.
Here $d_i \in \co_F$ is any generator of the discriminant of $K_i/F$,  and 
\[
c(0) = 
\frac{ \# \GL_{2h}(\co_F/\pi\co_F) }{ \# \GL_h (\co_{K_1}/\pi \co_{K_1} )   \cdot  \# \GL_h (\co_{K_2}/\pi \co_{K_2} ) } .
\]
\end{bigtheorem}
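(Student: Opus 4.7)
The plan is to decompose both sides of the identity into sums indexed by the same parameter set---essentially the $(\GL_h(\co_{K_1}) \times \GL_h(\co_{K_2}))$-double cosets in $\GL_{2h}(\co_F)$---and compare term by term. By the regular-semisimple hypothesis on $(\Phi_1,\Phi_2)$, the formal scheme $Y_1 \cap Y_2$ should be zero-dimensional on the rigid generic fiber, so the intersection multiplicity \eqref{intro intersection} is a finite sum of local lengths at closed points. The nowhere-vanishing of $|R(g)|$ reflects a division-algebra-versus-matrix-algebra dichotomy: since $\Inv_{(\Phi_1,\Phi_2)}$ comes from the division algebra $B$ while $\Inv_{(\Psi_1, g\Psi_2 g^{-1})}$ comes from $M_{2h}(F)$, the two polynomials must be coprime, which I would verify using the structural theory of \S\ref{ss:intro polynomial}.

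For the geometric side, I would use that each $Y_i$ is itself a Lubin--Tate space parametrizing lifts of $(\mathscr{G},\Phi_i)$ as a formal $\co_{K_i}$-module of height $h$. A closed point of $Y_1 \cap Y_2$ then corresponds to a deformation of $\mathscr{G}$ carrying compatible lifts of both $\Phi_1$ and $\Phi_2$. After choosing a basis of the rational Dieudonn\'e module (or Tate module) of a universal deformation, each such point gives rise to a pair of $\co_F$-algebra embeddings $\co_{K_i} \to M_{2h}(\co_F)$ of the form $(\Psi_1, g \Psi_2 g^{-1})$, well-defined up to the natural $\GL_h(\co_{K_1}) \times \GL_h(\co_{K_2})$-action. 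By Corollary \ref{cor:conjugacy}, this pair must share the invariant polynomial of $(\Phi_1,\Phi_2)$, identifying the closed points of $Y_1 \cap Y_2$ with a finite set of double-coset classes. The local intersection length at each such point should then be computed, via a Gross--Keating / Keating-type formula generalized to the biquadratic setting, to be exactly the valuation of $R(g)$ for the corresponding representative $g$.

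The final step is to rewrite the resulting double-coset sum as a Haar integral over $\GL_{2h}(\co_F)$ and to identify the normalizing constant as $c(0) \cdot |d_1d_2|^{-h^2/2}$. The factor $c(0)$ records the ratio of orbit counts $\#\GL_{2h}(\kk)/(\#\GL_h(\kk_1)\cdot\#\GL_h(\kk_2))$ with $\kk_i$ the residue field of $\co_{K_i}$, while the discriminant factor arises from transferring self-dual Haar measures between $M_{2h}(\co_F)$ and the centralizers of the embedded $\co_{K_i}$. The main obstacle is the local length computation: establishing the identification with the valuation of $R(g)$ requires a careful analysis of the deformation ring modulo the ``$\co_{K_i}$-structure'' ideals, and it is here that one expects to leverage the biquadratic structure developed in \S\ref{ss:intro polynomial}. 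This length formula is the technical heart of the theorem; everything else is essentially combinatorial and measure-theoretic bookkeeping.
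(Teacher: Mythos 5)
Your outline has two genuine problems, one structural and one internal contradiction. Structurally, the step you yourself call the technical heart --- that the local length at each putative intersection point equals the valuation of $R(g)$, via a ``Gross--Keating type formula generalized to the biquadratic setting'' --- is exactly what has to be proved, and it is not how the intersection behaves here. The paper never computes lengths point by point on $X$: it transports the whole problem to the linear model $\mathbf{X}=\Hom_{\co_F}(\mathscr{M},\mathscr{G})$, where the two cycles become images of quasi-isogenies $\mathbf{Y}_i\to\mathbf{X}$, and the intersection is the kernel of an isogeny whose degree is computed from heights (Propositions \ref{prop:heights 1} and \ref{prop:heights 2}, the latter identifying $\mathrm{ht}(\alpha_{12})+\mathrm{ht}(\alpha_{21})$ with $\ord_F(R^2)$ via reduced norms and resultants). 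This gives Theorem \ref{thm:formal intersection}, where the multiplicity is $|R|^{-1}\cdot|d_1d_2|^{-h^2/2}$ --- note this is $q^{\ord_F(R^2)/2}$, i.e.\ exponential in the valuation, not the valuation itself, so even the expected shape of your local contribution is off. The passage to the Lubin--Tate space then goes through deep Drinfeld level: for $m\gg 0$ the level-$\pi^m$ intersection agrees with the linear model by \cite[Theorem 4.1]{Li} (Proposition \ref{prop:high level}), and the integral $\int_{\GL_{2h}(\co_F)}dg/|R(g)|$ together with the constant $c(0)$ appears only when one descends from level $\pi^m$ to level $0$ as in \cite[Proposition 6.6]{Li} (Theorem \ref{thm:final intersection}). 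In other words, the Haar integral records the averaging over the tower, not a sum over many intersection points.

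The internal contradiction is in your second paragraph. The locus $Y_1\cap Y_2$ is supported at the single closed point of $X$ and has empty rigid generic fiber: a point of the generic fiber would force the quaternion division algebra $F(\Phi_1,\Phi_2)$ over the degree-$h$ field $L$ (Proposition \ref{prop:central algebra}, using that $B(\mathscr{G})$ is a division algebra) to act on a $2h$-dimensional $F$-vector space, which is impossible since its smallest faithful module has dimension $4h$. So there is no finite set of closed points indexed by double cosets. Moreover, your claim that each such point yields a pair $(\Psi_1,g\Psi_2g^{-1})$ in $M_{2h}(\co_F)$ ``sharing the invariant polynomial of $(\Phi_1,\Phi_2)$'' contradicts your own first paragraph and the paper's Proposition \ref{prop:good resultant}: by Proposition \ref{prop:quaternion recovery}, equality of invariant polynomials would embed that quaternion division algebra into $M_{2h}(F)$, which cannot happen; this impossibility is precisely why $|R(g)|$ is nowhere vanishing. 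Corollary \ref{cor:conjugacy} is also not applicable as you use it --- it compares two regular semisimple pairs inside the \emph{same} central simple algebra, and says nothing about matching a pair in $B(\mathscr{G})$ with a pair in $M_{2h}(F)$. As it stands, your decomposition would index an empty set and produce $0$ on the geometric side, so the argument cannot be repaired without replacing its core by something like the paper's height computation and level-structure comparison.
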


Theorem \ref{thmA}  is  a special case of Theorem \ref{thm:final intersection}, which gives a more general  intersection formula for cycles on  covers of $X$ obtained by adding Drinfeld level structures.  When $K_1 = K_2$, it specializes to the main result of \cite{Li}.


\subsection{Matching  and fundamental lemmas}
\label{ss:intro matching}


Let us return to the general situation of \S \ref{ss:intro polynomial}, where $K_1$ an $K_2$ are  quadratic \'etale $F$-algebras.
 From the pair  $(K_1,K_2)$  we constructed  a diagram  \eqref{biquadratic}  of \'etale $F$-algebras.
 There is another diagram of \'etale $F$-algebras, in some sense dual to the first.

Having already constructed $K_3$ from the pair $(K_1,K_2)$, we may set
\begin{equation*} 
K_0=F\times F  
\end{equation*} 
and repeat the construction of \eqref{biquadratic} with  the pair $(K_1,K_2)$ replaced by $(K_0 , K_3)$. 
Using the canonical  isomorphism $K_0 \otimes_F K_3 \iso K_3\times K_3$, the resulting picture is 
\begin{equation}\label{biquadratic2}
\xymatrix{
&  { K_3\times K_3}  \\
{   K_0 }  \ar@{-}[ur]^{  \nu_0}  &     {   K_3   }  \ar@{-}[u]_{\nu_3} &  {  K_3  } \ar@{-}[ul]_{\nu_0 \circ \nu_3  }  \\
&  {   F  ,}  \ar@{-}[ul]^{\sigma_0}  \ar@{-}[u]_{\sigma_3} \ar@{-}[ur]_{\sigma_3}
}
\end{equation}
where $(x,y)^{\nu_0}= (x^{\sigma_3} , y^{\sigma_3} )$ and $(x,y)^{\nu_3} = (y,x)$. 
To be completely explicit: the middle inclusion of $K_3$ into $K_3\times K_3$ is given by $z \mapsto (z,z)$, while the inclusion on the right is $z\mapsto (z,z^{\sigma_3})$.

The key point  is that   the  pairs $(K_1,K_2)$ and $(K_0,K_3)$ give rise to the \emph{same} third quadratic algebra $K_3$.   The constructions described in  \S \ref{ss:intro polynomial} work equally well with the  pair $(K_1,K_2)$  replaced by $(K_0 , K_3)$, and associate to  any  pair of $F$-algebra embeddings
\begin{equation}\label{intro03pair}
\Phi_0 : K_0 \to M_{2h}(F),\qquad  \Phi_3: K_3 \to M_{2h}(F) 
\end{equation}
 a degree $h$  monic polynomial  $\Inv_{(\Phi_0,\Phi_3)}(T) \in K_3[T]$.

Suppose the pairs  $(\Phi_1,\Phi_2)$ and $(\Phi_0,\Phi_3)$ of  \eqref{intro12pair} and \eqref{intro03pair} are regular semisimple and   \emph{matching}, in the sense that they have the same invariant polynomial.
Suppose also that the extension $K/K_3$ is unramified.

In Definition \ref{def:orbital 03} we associate an \emph{orbital integral} $O_{(\Phi_0,\Phi_3)} (f; s, \eta )$ 
to   any  compactly supported function
\begin{equation}\label{test function}
f : \GL_{2h}(\co_F) \backslash \GL_{2h}(F) / \GL_{2h}(\co_F) \to \C.
\end{equation}
Here $\eta : K_3^\times \to \{\pm 1\}$ is the unramified quadratic character determined by  $K/K_3$,
and $s$ is a complex variable.  We propose two conjectures on the behavior of this orbital integral at $s=0$.

Suppose first that the central simple algebra in \eqref{intro12pair} is $B=M_{2h}(F)$.  
In Definition \ref{def:orbital 12} we associate to \eqref{test function} another orbital integral 
$O_{(\Phi_1,\Phi_2)} (f )$, and conjecture that
\begin{equation}\label{intro value}
 O_{(\Phi_0,\Phi_3)} (f; 0, \eta ) =  \pm  O_{(\Phi_1,\Phi_2)} (f ).
\end{equation}
This is the \emph{biquadratic fundamental lemma} of Conjecture \ref{conj:BFL}.
The ambiguity in sign arises because  $O_{(\Phi_0,\Phi_3)} (f; s, \eta )$  depends on some additional choices, which make its value at $s=0$ well-defined only up to $\pm 1$.
When $K_1=K_2$ the biquadratic fundamental lemma is equivalent to the Guo-Jacquet fundamental lemma proposed in  \cite{Guo}.

Now suppose that the central simple algebra in \eqref{intro12pair} is the division algebra
$B=\End_{\co_F}(\mathscr{G}) \otimes F$ of \eqref{introLTpair}.
Because $\End_{\co_F}(\mathscr{G})$ is a (noncommutative) discrete valuation ring, the pair $(\Phi_1,\Phi_2)$ of  \eqref{intro12pair} automatically satisfies the  integrality conditions of \eqref{introLTpair}. 
In this case, we show in Proposition \ref{prop:orbital vanish} that the matching of $(\Phi_0,\Phi_3)$ with $(\Phi_1,\Phi_2)$ implies the vanishing of 
$O_{(\Phi_0,\Phi_3)} (f; s, \eta )$ at $s=0$ for all $f$.
When $f=\mathbf{1}$ is the characteristic function of $\GL_{2h}(\co_F)$, the work of Zhang \cite{Zhang}, its descendants \cite{LZ,Mih,RTZ,Zhang2},
 and the relative trace formula approach to the Gross-Kohnen-Zagier theorem \cite{HS}, suggests one should have an \emph{arithmetic biquadratic fundamental lemma} of (roughly) the form
 \begin{equation}\label{intro derivative}
  \frac{d}{ds} O_{(\Phi_0,\Phi_3)} ( \mathbf{1} ; s ,\eta)  \big|_{s=0} 
 \stackrel{?}{=}   I(\Phi_1,\Phi_2) \log(q).
\end{equation}

The  equality \eqref{intro derivative} was conjectured by the authors in an earlier version of this paper, but it was explained to us by Andreas Mihatsch that this is a bit too naive.  The correct conjecture should involve identifying  the Lubin-Tate space $X=X^{(0)}$ as one connected component of a larger Rapoport-Zink space
\[
X^\bullet = \bigsqcup_{\ell \in \Z} X^{(\ell)},
\]
extending the definition of the cycles $Y_i \to X$ to cycles $Y_i^\bullet \to X^\bullet$, and replacing the right hand side of \eqref{intro derivative} with an intersection number taking all connected components into account 
(carefully, as simply adding the intersection multiplicities on all components will always result in $\infty$).  
The reader can find the precise statement of this corrected version of \eqref{intro derivative} stated as Conjecture \ref{conj:ABFL}.

\begin{remark}
When $h\in \{1,2\}$, the authors have verified  both the biquadratic fundamental lemma (at least when $f = \mathbf{1}$ is the characteristic function of $\GL_{2h}(\co_F)$) and the arithmetic biquadratic fundamental lemma \eqref{intro value}.
The calculations for  $h=1$ appear in \S \ref{one case}.
Calculations for $h=2$   will appear in  future work.
\end{remark}


\subsection{Global analogues}


The results and conjectures of this work are purely local in nature, but we wish to give the reader at least some  indication of their global analogues.  As this is purely for motivational purposes, the following discussion will be somewhat impressionistic.  

The global problem to which the biquadratic fundamental lemma \eqref{intro value} should be applied is purely representation-theoretic.  Suppose we start with a global field $F$ and a central simple $F$-algebra $B$ of dimension $4h^2$.
Let $K_1$ and $K_2$ be quadratic \'etale $F$-algebras,  let $K_0$ and $K_3$ be as in \eqref{biquadratic2}, and suppose we are given pairs of $F$-algebra embeddings $(\Phi_1,\Phi_2)$ and $(\Phi_0,\Phi_3)$ as in \eqref{intro12pair} and \eqref{intro03pair}.


Let $\mathbb{A}$ be the adele ring of $F$.
Suppose we are given a cuspidal automorphic representation $\pi^B$ of $B_\mathbb{A}^\times$, 
and let $\pi$ be its Jacquet-Langlands lift to $\GL_{2h}(\mathbb{A})$.   
  The idea is that the biquadratic fundamental lemma should imply period relations of the form
\[
\int_{[ H_1]}  f_1 (h_1)\, dh_1  \int_{[ H_2]}  \overline{f_2 (h_2)} \, dh_2
= 
\int_{[ H_0]}  f_0(h_0)  \, dh_0  \int_{[ H_3]}  \overline{f_3(h_3)} \cdot \eta(h_3)  \, dh_3
\]
for suitable  $f_1,f_2 \in \pi^B$ and $f_0,f_3\in \pi$.  Here
\[
H_1,H_2 \subset B^\times \quad \mbox{and}\quad H_0,H_3 \subset \GL_{2h}(F)
\]
are the centralizers of the various $\Phi_i$'s,  so that $H_i \iso \mathrm{GL}_{2h}(K_i)$, and 
\[
[H_i]= H_i(F) \backslash H_i(\mathbb{A})/ \mathbb{A}^\times.
\]
The character $\eta : H_3(\mathbb{A}) \to \{ \pm 1\}$  is determined by the extension  $K/K_3$, as in \eqref{H3 character}.
In the special case  where  $B=M_2(F)$, $F$ has positive characteristic, and $\pi$ is unramified, such a period relation appears as \cite[Theorem D]{HS}.  

It is expected (and sometimes known) that the four periods above are related to special values of $L$-functions \cite{FJ, FMW, Guo}.
For example, the period integral over $[H_0]$ is related to the central value of the standard $L$-function of $\pi$.

Now we turn to the global analogue of the arithmetic biquadratic fundamental lemma \eqref{intro derivative}. 
Here one expects a formula relating central derivatives of $L$-functions to intersections of special cycles on unitary Shimura varieties, in the spirit of the Gross-Zagier \cite{GZ} and (especially) Gross-Kohnen-Zagier  \cite{GKZ} theorems on  the  N\'eron-Tate pairings of Heegner points in a modular Jacobian.

Take $F$ to be a quadratic imaginary field,  and let $K_1$ and $K_2$ be  quartic CM fields, each containing $F$.
Suppose we are given a hermitian space $W_i$ over $K_i$ of dimension $h$, whose signature at one archimedean place of its maximal totally real subfield is $(h-1,1)$, 
and whose signature at the other place is $(h,0)$.  One can associate to the unitary group  $U(W_i)$ a Shimura variety $Y_i$ of dimension $h-1$.   

If we view each $W_i$ as an $F$-vector space, and apply $\mathrm{Tr}_{K_i/F}$ to the hermitian form, we obtain a hermitian space $V_i$ over $F$ of signature $(2h-1,1)$.  To the unitary group  $U(V_i)$ we can associate a Shimura variety $X_i$ of dimension $2h-1$, and the inclusion $U(W_i) \subset  U(V_i)$  defines a  morphism
$Y_i \to X_i$.

 Suppose further that $V_1\iso V_2$, so that $X_1\iso X_2$.
 Call this common hermitian space $V$, and this common Shimura variety $X$.
We now have  two cycles $Y_1$ and $Y_2$ of codimension $h$ on  $X$.

The idea, roughly speaking, is that the Beilinson-Bloch height pairing of $Y_1$ and $Y_2$ in the codimension $h$ Chow group of $X$  should be related to derivatives of $L$-functions.  For certain cuspidal automorphic representations $\pi$ of $U(V)$ one should able to project $Y_1$ and $Y_2$ onto the $\pi$-component of the Chow group,
and the height pairing of these two projections should essentially be the central derivative of the standard $L$-function of $\pi$, multiplied by a period of $\pi$ over a smaller unitary group $U(W_3) \subset U(V)$, where 
 $W_3$ is a hermitian space of dimension $h$ over $K_3$.  The main result of \cite{HS} provides some evidence that such a relation should hold.

If one chooses a prime $p$ that is split in $F$, but with each prime above it nonsplit in both $K_1$ and $K_2$,
the above global cycles $Y_1,Y_2 \to X$ become  (after applying the Rapoport-Zink uniformization theorem)  the cycles on Lubin-Tate space whose intersection is the subject of \eqref{intro derivative}.


\subsection{Acknowledgements}


We thank Andreas Mihatsch for helpful correspondence, and the anonymous referee for a careful reading and constructive comments.


\section{Invariants of algebra embeddings}


In this section only,  we allow $F$ to be any field whatsoever.  
We will  attach a polynomial to any pair of $F$-algebra embeddings $(\Phi_1,\Phi_2)$  as in \eqref{intro12pair}.
When the pair is regular semisimple, in a sense we will make precise, the pair is determined  up to conjugacy by this polynomial.


\subsection{Noether-Skolem plus epsilon}


Suppose  $A$ is a semisimple  $F$-algebra of finite dimension; in other words, 
\[
A \iso A_1 \times \cdots\times A_r
\]
where each $A_i$ is a finite dimensional simple $F$-algebra (whose center may be strictly larger than $F$).
Let $B$ be a central simple $F$-algebra.

\begin{theorem}[Noether-Skolem]
\label{thm:ns}
Suppose $\varphi,\varphi' : A\to B$ are $F$-algebra homomorphisms, let $B(\varphi)=B$ with its left $A$-module structure induced by $\varphi$, and let  $B(\varphi')=B$ with its left $A$-module structure induced by $\varphi'$.
If \[ B(\varphi) \iso B(\varphi')\]  as $A$-modules, then $\varphi$ and $\varphi'$ are conjugate by an element of $B^\times$.
\end{theorem}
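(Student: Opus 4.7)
The plan is to upgrade the given $A$-linear isomorphism $B(\varphi) \iso B(\varphi')$ to an isomorphism over the larger algebra $R = A \otimes_F B^{\mathrm{op}}$, whose action on $B$ is given by $(a\otimes b)\cdot x = \varphi(a)\, x\, b$ (with the analogous formula defining $B(\varphi')$). Once this step is in hand, any $R$-linear isomorphism $\theta \colon B(\varphi) \to B(\varphi')$ is in particular right $B$-linear, so $\theta(x) = ux$ for $u := \theta(1) \in B$, and $u$ is a unit because $\theta$ is an isomorphism of free rank one right $B$-modules. The left $A$-linearity of $\theta$ then reads $u\varphi(a) = \varphi'(a) u$ for all $a \in A$, i.e.\ $\varphi' = u\varphi u^{-1}$, which is exactly the desired conjugacy.

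To implement this I would first observe that $R$ is semisimple: writing $A = \prod_i A_i$ with each $A_i$ simple of center $Z_i$, we have $R = \prod_i (A_i \otimes_F B^{\mathrm{op}})$, and each factor is central simple over $Z_i$, being the tensor product over $Z_i$ of the central simple $Z_i$-algebras $A_i$ and $Z_i \otimes_F B^{\mathrm{op}}$. Let $S_i$ be the unique simple module of the $i$-th factor. Then the decomposition $B(\varphi) = \bigoplus_i \varphi(e_i) B$ according to the central idempotents $e_i$ of $A$ is a decomposition as $R$-modules, and $\varphi(e_i) B \iso S_i^{\oplus m_i(\varphi)}$ for a multiplicity $m_i(\varphi)$ determined by $\dim_F \varphi(e_i) B$; likewise for $\varphi'$.

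The substantive step, and what I regard as the main obstacle, is showing that these multiplicities $m_i(\varphi)$ are already determined by the $A$-module class of $B(\varphi)$ alone. Here I would use that as an $A_i$-module $\varphi(e_i) B$ is isotypic of type the unique simple $A_i$-module $L_i$, whence $\dim_F \varphi(e_i) B$ equals $\dim_F L_i$ times the multiplicity of $L_i$ in $B(\varphi)$ regarded only as an $A$-module. The assumed isomorphism $B(\varphi) \iso B(\varphi')$ of $A$-modules therefore forces $\dim_F \varphi(e_i) B = \dim_F \varphi'(e_i) B$, hence $m_i(\varphi) = m_i(\varphi')$, and therefore $B(\varphi) \iso B(\varphi')$ as $R$-modules. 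Combined with the first paragraph this completes the proof; note that if $A$ were merely finite dimensional rather than semisimple, the Wedderburn decomposition and the multiplicity comparison would both fail, which is why the hypothesis of semisimplicity is essential.
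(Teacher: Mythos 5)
Your proof is correct, and it is exactly the argument the paper has in mind: the paper's proof simply invokes the standard Noether--Skolem argument via the $A\otimes_F B^{\mathrm{op}}$-module structure on $B$ and leaves the semisimple generalization as an exercise, which is precisely what you carry out (the hypothesis $B(\varphi)\iso B(\varphi')$ replacing the dimension count that suffices when $A$ is simple). Your multiplicity comparison across the factors $A_i\otimes_F B^{\mathrm{op}}$ is a correct and complete way to fill in those details.
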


\begin{proof}
If $A$ is a simple algebra then the hypothesis $B(\varphi) \iso B(\varphi')$ is automatically satisfied, and this is the usual Noether-Skolem theorem \cite[Tag 074Q]{stacks-project}.  
The proof of this mild generalization follows the same argument, and we leave the details as an exercise to the reader.
\end{proof}

Suppose $L$ is a finite \'etale $F$-algebra; in other words,  a finite product of finite separable field extensions.

\begin{definition}\label{def:embedding}
By an  \emph{$F$-algebra embedding} $L \to B$ we mean  an injective map of $F$-algebras making $B$ into a free $L$-module.   
\end{definition}

\begin{corollary}\label{cor:ns}
Any two $F$-algebra embeddings $L \to B$ are $B^\times$-conjugate.
\end{corollary}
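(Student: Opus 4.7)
The plan is to deduce the corollary directly from Theorem \ref{thm:ns} (the generalized Noether-Skolem statement), using Definition \ref{def:embedding} to supply the required $L$-module isomorphism $B(\varphi)\iso B(\varphi')$.

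First I would observe that, by Definition \ref{def:embedding}, both $B(\varphi)$ and $B(\varphi')$ are free $L$-modules. Let $r=\dim_F B / \dim_F L$; this integer is forced on both modules by comparing $F$-dimensions, since an $L$-module that is free of rank $r$ has $F$-dimension $r \cdot \dim_F L$. Hence both $B(\varphi)$ and $B(\varphi')$ are isomorphic to $L^r$ as $L$-modules, and in particular they are isomorphic to each other.

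With the $L$-module isomorphism $B(\varphi)\iso B(\varphi')$ in hand, I would apply Theorem \ref{thm:ns} (noting that $L$, being étale, is semisimple as an $F$-algebra, so the hypotheses of that theorem are satisfied) to conclude that $\varphi$ and $\varphi'$ are conjugate by an element of $B^\times$.

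There is no real obstacle here; the work has been done in Theorem \ref{thm:ns}, and the role of Definition \ref{def:embedding} is precisely to guarantee that the module-theoretic hypothesis of that theorem is automatic (which it would not be, for instance, if one only required $\varphi$ to be injective without the freeness condition, since then $B(\varphi)$ and $B(\varphi')$ could be nonisomorphic projective $L$-modules supported on different combinations of the simple factors of $L$).
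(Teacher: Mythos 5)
Your proof is correct and is essentially the paper's own argument, which simply says ``take $A=L$ in Theorem \ref{thm:ns}''; you have merely spelled out the (intended, and correct) verification that freeness plus a rank count forces $B(\varphi)\iso B(\varphi')$ as $L$-modules and that $L$, being \'etale, is semisimple.
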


\begin{proof}
Take $A=L$ in Theorem \ref{thm:ns}.
\end{proof}

\begin{remark}
The corollary   is false if we drop the freeness condition in Definition \ref{def:embedding}. 
For example,  the  $F$-algebra maps $F\times F \to M_3(F)$ defined by 
\[
(x,y) \mapsto \begin{pmatrix} x \\ & x \\ & & y  \end{pmatrix}  \quad \mbox{and}\quad
(x,y) \mapsto  \begin{pmatrix} x \\ & y \\ & & y  \end{pmatrix} 
\]
are not $\GL_3(F)$-conjugate.
\end{remark}


\subsection{Invariant polynomials}
\label{ss:polynomial}


Fix a positive integer $h$, and let $B$ be a central simple $F$-algebra of dimension $4h^2.$
Let $K_1$ and $K_2$ be  quadratic \'etale extensions of $F$ as in \eqref{biquadratic}.
Our goal  is to attach to any pair of  $F$-algebra  embeddings
\begin{equation}\label{initial embeddings}
\Phi_1 : K_1 \to B,\qquad  \Phi_2: K_2 \to B .
\end{equation}
 a degree $h$  monic polynomial with coefficients in $K_3$.
 This generalizes constructions of  \cite{HS} in the special case $h=1$, and constructions of  \cite{Guo, Li} in the special case $K_1 \iso K_2$.  See \S \ref{ss:alt invariants}.

The construction of this polynomial uses the $K_3$-algebra
\[
C=B\otimes_F K_3.
\]
 The   canonical  maps 
 \begin{equation}\label{extension of scalars}
 K_1\otimes_F K_3 \map{ x_1 \otimes x_3\mapsto x_1x_3}  K ,\qquad K_2\otimes_F K_3 \map{ x_2 \otimes x_3\mapsto x_2x_3}  K
 \end{equation}
are both isomorphisms, and so the $F$-algebra embeddings \eqref{initial embeddings} extend uniquely to $K_3$-algebra embeddings
 \begin{equation}\label{induced embeddings}
 \Phi_1,\Phi_2 : K \to C.
 \end{equation}

\begin{lemma}
If $y\in K$ is a  $K_3$-algebra generator then
\begin{align}
\Phi_1(y) - \Phi_2(y^{\tau_3}) & = \Phi_2(y) - \Phi_1(y^{\tau_3}), \label{intertwine 1} 
\end{align}
and for all $x\in K$ we have
\begin{align}
(\Phi_1(y) - \Phi_2(y^{\tau_3}) ) \cdot \Phi_2(x)  & = \Phi_1(x)  \cdot (\Phi_1(y) - \Phi_2(y^{\tau_3}) ) \label{intertwine 2} \\
(\Phi_1(y) - \Phi_2(y^{\tau_3}) ) \cdot \Phi_1(x)  & =  \Phi_2(x)  \cdot (\Phi_1(y) - \Phi_2(y^{\tau_3}) ) \nonumber\\
(\Phi_1(y) - \Phi_2(y) ) \cdot \Phi_2(x)  & = \Phi_1(x^{\tau_3} )  \cdot (\Phi_1(y) - \Phi_2(y) )\nonumber \\
(\Phi_1(y) - \Phi_2(y) ) \cdot \Phi_1(x)  & = \Phi_2(x^{\tau_3} )  \cdot (\Phi_1(y) - \Phi_2(y) ) . \nonumber 
\end{align}
\end{lemma}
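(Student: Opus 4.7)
The plan rests on two simple observations about the $K_3$-algebra extensions $\Phi_1,\Phi_2 : K \to C$ of \eqref{induced embeddings}. First, both maps are $K_3$-linear, so they agree on $K_3 \subset K$, where each coincides with the canonical inclusion $\iota : K_3 \hookrightarrow C = B\otimes_F K_3$, $z \mapsto 1\otimes z$; crucially, $\iota(K_3)$ is central in $C$. Second, since $y$ generates the quadratic extension $K/K_3$, the elements $y$ and $y^{\tau_3}$ are the two roots of a monic quadratic over $K_3$ whose coefficients are $\mathrm{tr}(y) := y + y^{\tau_3}$ and $\mathrm{N}(y) := y y^{\tau_3}$. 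Applying $\Phi_i$ to the Vieta relations yields, for $i=1,2$,
\[
\Phi_i(y) + \Phi_i(y^{\tau_3}) = \iota(\mathrm{tr}(y)),\qquad \Phi_i(y)\Phi_i(y^{\tau_3}) = \iota(\mathrm{N}(y)).
\]
Identity \eqref{intertwine 1} is then immediate from equating the two cases $i = 1, 2$ of the trace relation and rearranging.

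For the four intertwining identities collected in \eqref{intertwine 2}, the strategy is to reduce to $K_3$-module generators of $K$. Since both $\Phi_1(x)$ and $\Phi_2(x)$ depend $K_3$-linearly on $x$ and $K = K_3 \oplus K_3 \cdot y$, it suffices to verify each identity when $x \in K_3$ and when $x = y$. The case $x \in K_3$ is automatic: then $\Phi_1(x) = \Phi_2(x) = \iota(x)$ is central in $C$ and $x^{\tau_3} = x$, so each identity collapses to the statement that $\iota(x)$ commutes past $\Phi_1(y) - \Phi_2(y^{\tau_3})$ or $\Phi_1(y) - \Phi_2(y)$. The case $x = y$ is a one-line manipulation. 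For the first intertwining identity, substitute $\Phi_1(y)^2 = \iota(\mathrm{tr}(y))\Phi_1(y) - \iota(\mathrm{N}(y))$ and $\Phi_2(y^{\tau_3})\Phi_2(y) = \iota(\mathrm{N}(y))$ into the two sides; the difference then reads $\Phi_1(y)\cdot[\iota(\mathrm{tr}(y)) - \Phi_2(y) - \Phi_2(y^{\tau_3})]$ after using centrality, which vanishes by the Vieta relation for $i = 2$. The other three intertwining identities are verified by the same kind of substitution, or deduced from the first two using the sign-flip $\Phi_1(y^{\tau_3}) - \Phi_2(y) = -(\Phi_1(y) - \Phi_2(y^{\tau_3}))$ provided by \eqref{intertwine 1}.

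There is no genuine obstacle here: every identity reduces to a short calculation using only the quadratic relation and the centrality of $\iota(K_3)$. The one point worth flagging is that the argument is characteristic-free — nothing requires inverting $2$ or decomposing $y$ into $\pm$-eigencomponents for $\tau_3$, which matters since $F$ is allowed to be any field.
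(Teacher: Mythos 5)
Your proposal is correct and follows essentially the same route as the paper: both arguments rest on the fact that $\Phi_1$ and $\Phi_2$ agree on $K_3$ (equivalently on $y+y^{\tau_3}$ and $yy^{\tau_3}$), reduce the intertwining identities to the cases $x\in K_3$ and $x=y$ via $K=K_3+K_3y$ (the paper writes $x=ay+b$), and then settle $x=y$ by a one-line computation using the quadratic relation for $y$ over $K_3$. The only difference is cosmetic --- you phrase the $x=y$ step through Vieta substitutions where the paper rewrites $\Phi_2(yy^{\tau_3})=\Phi_1(yy^{\tau_3})$ directly.
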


\begin{proof}
The two maps \eqref{induced embeddings} have the same restriction to $K_3$, and so  
\begin{align*}
\Phi_1(y+y^{\tau_3}) & = \Phi_2(y + y^{\tau_3}) \\
\Phi_1(y y^{\tau_3})  & = \Phi_2(y y^{\tau_3}).
\end{align*}
The relation  \eqref{intertwine 1}  is clear from the first of these.
If we write $x\in K$  as $x=a y+b$ with $a,b\in K_3$, then the first equality of \eqref{intertwine 2} follows from
\begin{align*}
(\Phi_1(y) - \Phi_2(y^{\tau_3}) ) \cdot \Phi_2(y) 
& =  \Phi_1(y)\Phi_2(y) - \Phi_2(y y^{\tau_3})\\
& =  \Phi_1(y)\Phi_2(y) -\Phi_1(yy^{\tau_3}) \\
& = \Phi_1(y)  \cdot (\Phi_1(y) - \Phi_2(y^{\tau_3}) ).
\end{align*}
The other equalities in  \eqref{intertwine 2}  are proved in the same way.
\end{proof}

Fix a $y\in K$ such that $K = K_3[y]$.  Noting that $(y-y^{\tau_3})^2 \in K_3^\times$,  define 
\begin{equation}\label{better s}
\mathbf{s} 
= \frac{ (  \Phi_1(y) - \Phi_2(y^{\tau_3}) )^2  }{(y-y^{\tau_3})^2} \in C
\end{equation}
and
\begin{equation}\label{better t}
\mathbf{t} = \frac{   \Phi_1(y)\Phi_2(y) - \Phi_2(y) \Phi_1(y)  }{(y-y^{\tau_3})^2 } \in C.
\end{equation}

\begin{proposition}\label{prop:st}
The elements \eqref{better s} and \eqref{better t} do not depend on the choice of $K_3$-algebra generator $y\in K$,  and satisfy
 \begin{equation}\label{st commutation}
  \mathbf{s} \cdot \Phi_i(x)=\Phi_i(x)   \cdot \mathbf{s},\qquad 
 \mathbf{t} \cdot \Phi_i(x)=\Phi_i(x^{\tau_3})   \cdot \mathbf{t}
 \end{equation}
 for all $x \in K$. In particular, the first equality implies that $\mathbf{s}\mathbf{t} = \mathbf{t} \mathbf{s}$.
 \end{proposition}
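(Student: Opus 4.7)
The plan is to leverage the intertwining identities of the preceding lemma together with the fact that $K_3$ lies in the center of $C = B \otimes_F K_3$. Write $u = \Phi_1(y) - \Phi_2(y^{\tau_3})$ and $w = \Phi_1(y) - \Phi_2(y)$, so that $\mathbf{s} = u^2/(y - y^{\tau_3})^2$. The key inputs are the ``swap'' relation $u\,\Phi_i(x) = \Phi_{3-i}(x)\,u$ and the ``twisted swap'' $w\,\Phi_i(x) = \Phi_{3-i}(x^{\tau_3})\,w$, both supplied by \eqref{intertwine 2}.

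For the first equality in \eqref{st commutation}, I would iterate the swap to obtain $u^2\,\Phi_i(x) = \Phi_i(x)\,u^2$; since $(y-y^{\tau_3})^2 \in K_3^\times$ is central in $C$, dividing yields $\mathbf{s}\,\Phi_i(x) = \Phi_i(x)\,\mathbf{s}$. For the twisted commutation of $\mathbf{t}$, the crucial step is the factorization
\[
\Phi_1(y)\Phi_2(y) - \Phi_2(y)\Phi_1(y) = -uw,
\]
which I would verify by expanding $uw$, applying \eqref{intertwine 2} at $x=y$ to rewrite $\Phi_1(y)^2 - \Phi_2(y^{\tau_3})\Phi_1(y)$ as $\Phi_2(y)\Phi_1(y) - \Phi_2(y y^{\tau_3})$, and noting that $y y^{\tau_3} \in K_3$ forces the two copies of $\Phi_i(y y^{\tau_3})$ to cancel. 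Once $\mathbf{t} = -uw/(y-y^{\tau_3})^2$ is in hand, composing the swap through $u$ with the twisted swap through $w$ gives $uw\,\Phi_i(x) = \Phi_i(x^{\tau_3})\,uw$, which is the second relation of \eqref{st commutation}.

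For independence of $y$, any other $K_3$-algebra generator has the form $y' = ay + b$ with $a \in K_3^\times$ and $b \in K_3$; since $a,b$ are central in $C$ and fixed by $\tau_3$, direct substitution yields $\Phi_1(y') - \Phi_2(y'^{\tau_3}) = au$ and $y' - y'^{\tau_3} = a(y - y^{\tau_3})$, so the factor $a^2$ cancels in $\mathbf{s}$. An analogous expansion shows the commutator in the numerator of $\mathbf{t}$ also scales by $a^2$ (the $b$-cross terms cancel symmetrically), giving the matching cancellation. The final assertion $\mathbf{s}\mathbf{t} = \mathbf{t}\mathbf{s}$ is immediate from the first equality, as the numerator of $\mathbf{t}$ is a polynomial in the $\Phi_i(y)$, each of which commutes with $\mathbf{s}$. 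The main obstacle is spotting the identity $v = -uw$; once this factorization is available, the rest of the argument reduces to bookkeeping that exploits the centrality of $K_3$.
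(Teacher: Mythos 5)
Your proof is correct and follows essentially the same route as the paper: the commutation relations come from iterating the intertwining identities of \eqref{intertwine 2} through $u=\Phi_1(y)-\Phi_2(y^{\tau_3})$ and $w=\Phi_1(y)-\Phi_2(y)$, and your factorization of the numerator of $\mathbf{t}$ as $-uw$ is exactly the paper's rewriting \eqref{another t} (note $uw=-wu$). The only cosmetic difference is that the paper establishes independence of $y$ via the two-variable expressions \eqref{general s}--\eqref{general t} (which it reuses later), whereas you check invariance under the affine substitution $y\mapsto ay+b$ directly in one variable; both rest on the same fact that any two $K_3$-algebra generators of $K$ are so related.
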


\begin{proof}
For the independence of $\mathbf{s}$ and $\mathbf{t}$ on $y$, we will actually prove something slightly stronger:
the elements 
 \begin{equation}\label{general s}
\mathbf{s} = \frac{  - ( y_1 y_2^{\tau_3} + y_2 y_1^{\tau_3}) + \Phi_1(y_1) \Phi_2(y_2) + \Phi_2(y_2^{\tau_3}) \Phi_1(y_1^{\tau_3} ) }{ ( y_1-y_1^{\tau_3} )( y_2-y_2^{\tau_3} ) }   
\end{equation}
and
\begin{equation}\label{general t}
\mathbf{t} =\frac{  \Phi_1(y_1) \Phi_2(y_2)  - \Phi_2(y_2) \Phi_1(y_1)  }{ ( y_1-y_1^{\tau_3} )( y_2-y_2^{\tau_3} )  } 
\end{equation}
are independent of the choices of $K_3$-algebra generators $y_1,y_2\in K$, and agree with \eqref{better s} and \eqref{better t}.
  Indeed,  by direct calculation one can see that the right hand sides of \eqref{general s} and \eqref{general t}  are unchanged by substitutions of the form
$y_1 \mapsto a_1 y_1+b_1$ and $y_2 \mapsto a_2 y_2+b_2$  with $a_1,a_2\in K_3^\times$ and $b_1,b_2\in K_3$, and hence both are independent of the choices of $y_1$ and $y_2$.  
If we  set $y_1=y_2=y$  then  \eqref{general s}  simplifies  to 
\[
\mathbf{s} =  \frac{ (  \Phi_1(y) - \Phi_2(y^{\tau_3}) ) \cdot (  \Phi_2(y) - \Phi_1(y^{\tau_3}) )  }{(y-y^{\tau_3})^2}
\]
which is equal to \eqref{better s} by \eqref{intertwine 1}.  Similarly,  the right hand side of   \eqref{general t} simplifies to \eqref{better t}.

 It follows from  \eqref{intertwine 2} that $\Phi_i(x)$ commutes with 
$(  \Phi_1(y) - \Phi_2(y^{\tau_3}) )^2$, and so also commutes with $\mathbf{s}$. This proves the first equality in \eqref{st commutation}.
 The second is proved in the same way, after noting that  \eqref{better t} can be rewritten as
 \begin{equation}\label{another t}
 \mathbf{t} 
	=\frac{   (\Phi_1(y)-\Phi_2(y))    (\Phi_1(y)-\Phi_2(y^{\tau_3}))  }{   (y-y^{\tau_3})^2 }. \qedhere
\end{equation}
\end{proof}

\begin{proposition}\label{prop:square}
 The reduced characteristic polynomial  $P_{\mathbf{s}} \in K_3[T]$ of $\mathbf{s}\in C$ is a square, and if we write
 $P_{\mathbf{s}} = Q_\mathbf{s}^2$ then  $Q_{\mathbf{s}}(\mathbf{s})=0$ in $C$.
\end{proposition}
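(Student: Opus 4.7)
The plan is to reduce both assertions to an explicit calculation over an algebraic closure of $K_3$, expressing $\mathbf{s}$ as the square of a simple combination of two idempotents and then invoking the classical structure theory for a pair of idempotents.

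First I would pass to an algebraic closure $\bar{K}_3$, which loses no information: the reduced characteristic polynomial commutes with base change, and a monic polynomial in $K_3[T]$ which is a square in $\bar{K}_3[T]$ is automatically a square in $K_3[T]$, since its $\bar{K}_3$-roots form a $\mathrm{Gal}(\bar{K}_3/K_3)$-stable multiset in which every multiplicity is even. Identifying $C \otimes_{K_3} \bar{K}_3$ with $\End_{\bar{K}_3}(V)$ for a $2h$-dimensional $\bar{K}_3$-vector space $V$, and using the splitting $K \otimes_{K_3} \bar{K}_3 \iso \bar{K}_3 \times \bar{K}_3$ (with the two factors swapped by $\tau_3$), each of the extended embeddings of \eqref{induced embeddings} produces a pair of complementary idempotents in $\End(V)$; the freeness condition in Definition \ref{def:embedding} forces each of these idempotents to have rank $h$. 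Writing $P$ and $Q$ for the rank-$h$ idempotents attached to $\Phi_1$ and $\Phi_2$, and using $\Phi_1(y) = \alpha P + \beta(I-P)$ and $\Phi_2(y) = \alpha Q + \beta(I-Q)$ (with $\alpha,\beta \in \bar{K}_3$ the two components of $y \in K \otimes \bar{K}_3$), a brief expansion of \eqref{better s} yields the purely linear-algebraic identity
\[
\mathbf{s} = (P + Q - I)^2,
\]
which is valid in every characteristic.

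The key step is then the classical structure theorem for a pair of idempotents over any field, which furnishes a $(P, Q)$-stable decomposition
\[
V = V_{00} \oplus V_{01} \oplus V_{10} \oplus V_{11} \oplus V_G,
\]
where $P|_{V_{ab}} = a$ and $Q|_{V_{ab}} = b$, and where $V_G$ further splits as a direct sum of $(P, Q)$-stable $2$-dimensional blocks on each of which $P$ and $Q$ are distinct rank-one idempotents in general position. Setting $n_{ab} = \dim V_{ab}$, the identities $\dim(PV) = n_{10} + n_{11} + \tfrac{1}{2}\dim V_G = h$ and $\dim(QV) = n_{01} + n_{11} + \tfrac{1}{2}\dim V_G = h$ (together with their analogues for the kernels) immediately yield the balance relations $n_{00} = n_{11}$ and $n_{01} = n_{10}$.

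It is then immediate from the decomposition that $\mathbf{s}$ acts as a scalar on each summand: as $I$ on $V_{00}$ and $V_{11}$, as $0$ on $V_{01}\oplus V_{10}$, and as $a_j \cdot I$ on each generic block for some $a_j \in \bar{K}_3 \setminus \{0,1\}$ (by a direct $2 \times 2$ matrix calculation). Hence $\mathbf{s}$ is diagonalizable with
\[
P_{\mathbf{s}}(T) = (T-1)^{2 n_{00}} \cdot T^{2 n_{01}} \cdot \prod_j (T - a_j)^2 = Q_{\mathbf{s}}(T)^2,
\]
where $Q_{\mathbf{s}}(T) = (T-1)^{n_{00}} \cdot T^{n_{01}} \cdot \prod_j (T - a_j)$; since every distinct eigenvalue of $\mathbf{s}$ appears as a root of $Q_{\mathbf{s}}$, diagonalizability forces $Q_{\mathbf{s}}(\mathbf{s}) = 0$. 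The only subtle point is having the pair-of-idempotents decomposition available uniformly in every characteristic; in characteristic not equal to $2$ one could alternatively work with the involutions $R = 2P - I$ and $S = 2Q - I$ and analyze $U = RS$ via $RUR = U^{-1}$ and $\det U = 1$, but the idempotent decomposition above is characteristic-independent and conceptually cleaner.
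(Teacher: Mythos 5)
There is a genuine gap, and it is the central step. The reduction to $\bar K_3$, the identity $\mathbf{s}=(P+Q-I)^2$, and the rank computation for $P$ and $Q$ are all fine, but the ``classical structure theorem'' you invoke is not true as stated: a pair of idempotents does \emph{not} in general decompose into the four flat pieces plus $2$-dimensional blocks on which $P$ and $Q$ are rank-one idempotents in general position, and in particular $\mathbf{s}$ need not be diagonalizable. The proposition carries no regular semisimplicity hypothesis (it is what is used to \emph{define} the invariant polynomial in general), and without that hypothesis $\mathbf{s}$ can have nontrivial Jordan blocks: write $V=U\oplus U'$ with $P$ the projection onto $U$ along $U'$, and let $Q$ be the projection onto the graph of $T:U\to U'$ along the graph of $S:U'\to U$; since $\mathbf{s}$ commutes with $P$ one computes $\mathbf{s}|_U=PQP|_U=(I-ST)^{-1}$, which can be any invertible operator, e.g.\ unipotent with a single Jordan block of size $h$ (these are the well-known indecomposable pairs of idempotents of arbitrary even dimension). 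Such pairs of rank-$h$ idempotents arise from genuine embeddings of $K_1=K_2=F\times F$, so the failure occurs within the scope of the proposition. Consequently your displayed factorization of $P_{\mathbf{s}}$ and the deduction of $Q_{\mathbf{s}}(\mathbf{s})=0$ from diagonalizability both collapse. What survives of your setup is the coarser decomposition $V=PV\oplus(I-P)V$ into $\mathbf{s}$-stable summands of dimension $h$; the real content is that the two restrictions of $\mathbf{s}$ have the \emph{same} characteristic polynomial, and this is exactly how the paper argues: when $\mathbf{t}$ is invertible its off-diagonal blocks conjugate $\mathbf{s}|_{PV}$ into $\mathbf{s}|_{(I-P)V}$, the general case follows by Zariski density applied to the pairs $(\Phi_1,g\Phi_2g^{-1})$, and Cayley--Hamilton applied to each $h\times h$ block then yields $Q_{\mathbf{s}}(\mathbf{s})=0$ with no semisimplicity needed.

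A second, smaller gap is the descent step. A monic polynomial in $K_3[T]$ that becomes a square over an algebraic closure need not be a square over $K_3$ when $K_3$ is imperfect of characteristic $2$: for example $T^2-t$ over $\mathbb{F}_2(t)$, whose unique monic square root is $T-\sqrt{t}$; your Galois-stable-multiset argument says nothing about inseparable roots, and the paper explicitly allows any characteristic (local function fields are imperfect). Note also that $K_3$ may be $F\times F$, so ``algebraic closure of $K_3$'' has to be read factorwise. The paper sidesteps all of this by extending scalars only separably and exhibiting the square root as the characteristic polynomial of the explicit block $\mathbf{s}_+$, which has coefficients in the base ring; if you want to keep the idempotent picture, you should likewise work over the separable closure (where $C$ and $K/K_3$ already split) and prove the similarity of the two $h\times h$ blocks there, rather than passing to eigenvalues over $\bar K_3$.
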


\begin{proof}
The claim can be checked after extending scalars to a separable extension, so we may assume that $F$ itself is separably closed.  Fix  isomorphisms $K_1 \iso F\times F$ and $B \iso M_{2h}(F)$ in such a way that $\Phi_1 : K_1 \to M_{2h}(F)$ is 
\[
\Phi_1(a,b) = \begin{pmatrix} a I \\ & b I   \end{pmatrix}
\]
where $I\in M_h(F)$ is the identity matrix.  
Using  \eqref{st commutation}, one can see that    $\mathbf{s} , \mathbf{t}  \in C\iso M_{2h}(K_3)$  have the form 
\[
\mathbf{s} =  \begin{pmatrix} \mathbf{s}_+ \\ & \mathbf{s}_- \end{pmatrix} 
\qquad \mathbf{t} = \begin{pmatrix} & \mathbf{u} \\  \mathbf{v}&  \end{pmatrix} 
\]
for some $\mathbf{u} ,\mathbf{v},\mathbf{s}_\pm   \in M_h(K_3)$.  The condition that $\mathbf{s}$ and $\mathbf{t}$ commute translates to  
$\mathbf{s}_+ \mathbf{u} = \mathbf{u} \mathbf{s}_-$ and  $\mathbf{v}   \mathbf{s}_+ =  \mathbf{s}_-\mathbf{v}$.

 Let $P_\mathbf{s}^\pm$ be the characteristic polynomial of $\mathbf{s}_\pm$.
  As $P_\mathbf{s} = P^+_\mathbf{s}P^-_\mathbf{s}$, to prove the proposition it suffices to prove 
$
 P^+_\mathbf{s}=P^-_\mathbf{s} . 
$
 If   $\mathbf{t}\in C$ is invertible then so are $\mathbf{u}$ and $\mathbf{v}$, hence  $\mathbf{s}_+$ and $\mathbf{s}_-$ are similar.  The general case is by a Zariski density argument.  For any $g \in  \GL_{2h}(F)$ we may form the elements  
 $
 \mathbf{s} , \mathbf{t}  \in M_{2h}(K_3)
 $
  associated to the pair $(\Phi_1 , g \Phi_2 g^{-1})$, and view them as functions of $g$. 
 We have seen that  $P_\mathbf{s}^+ =  P_\mathbf{s}^-$ for all $g$ in the open dense subset defined by 
 $\det( \mathbf{t} ) \neq 0$,
%
so the same equality also holds at $g=I$.  
 \end{proof}

We define the invariant polynomial of  $(\Phi_1,\Phi_2)$ to be the degree $h$ polynomial $Q_\mathbf{s}$ of 
Proposition  \ref{prop:square}.  In other words:

\begin{definition}\label{def:invariant polynomial}
The   \emph{invariant polynomial}   
\begin{equation}\label{invariant}
\Inv_{(\Phi_1,\Phi_2)}  \in K_3[T]
\end{equation}
of the pair \eqref{initial embeddings} is the unique monic square root of the reduced characteristic polynomial of $\mathbf{s}\in C$.
\end{definition}

\begin{remark}\label{rem:conj invariant}
If  $\Phi_1$ and $\Phi_2$ are simultaneously conjugated by an element of $ B^\times$, then $\mathbf{s},\mathbf{t}\in C$ are  simultaneously conjugated by that same element.
Thus  the invariant polynomial only depends on the $B^\times$-conjugacy class of  $(\Phi_1,\Phi_2)$.
  \end{remark}

\begin{remark}
The centralizer  $C(\Phi_i)$ of the image of $\Phi_i : K \to C$ is a central simple $K$-algebra of rank $h^2$,
and $\mathbf{s} \in C(\Phi_i)$ by \eqref{st commutation}.   The same argument used in the proof of Proposition \ref{prop:square} shows that the invariant polynomial  is equal to  reduced characteristic polynomial of $\mathbf{s}$ when viewed as an element of either one of the $K$-algebras $C(\Phi_1)$ or $C(\Phi_2)$.
\end{remark}

\begin{remark}\label{screwy 1}
We always allow the possibility that $K_1= K_2$, but be warned:
when  $K_1=K_2$ the two isomorphisms in \eqref{extension of scalars} are not equal, as they arise from embeddings $K_1\to K$ and $K_2\to K$ with different images.  
Because of this, even if the two $F$-algebra maps in  \eqref{initial embeddings} are  equal, the $K_3$-algebra maps in \eqref{induced embeddings} will not be.  
\end{remark}


\subsection{The functional equation}


We will show that the  invariant polynomial $\Inv_{(\Phi_1,\Phi_2)}(T) \in K_3[T]$ 
of Definition \ref{def:invariant polynomial}  satisfies a functional equation in $T\mapsto 1-T$.

By mild abuse of notation, we denote again by $\sigma_3$ the $F$-linear automorphism $\mathrm{id}\otimes \sigma_3$ of $C=B\otimes_F K_3$.

\begin{lemma}\label{lem:s conj}
The element $\mathbf{s}\in C$ of \eqref{better s} satisfies 
\[
\mathbf{s}^{\sigma_3}  = \frac{   (\Phi_1(y)  - \Phi_2(y))^2 }{ ( y-y^{\tau_3})^2 }.
\]
\end{lemma}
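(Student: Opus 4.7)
The plan is to apply $\sigma_3$ directly to the formula \eqref{better s} defining $\mathbf{s}$ and simplify using Galois-theoretic identities among the three involutions $\tau_1, \tau_2, \tau_3$ of $K/F$. The key compatibility I would establish first is that for all $y \in K$ and $i \in \{1,2\}$,
\[ \Phi_i(y)^{\sigma_3} = \Phi_i(y^{\tau_i}) \in C. \]
To see this, observe that under the isomorphism $K \iso K_i \otimes_F K_3$ of \eqref{extension of scalars}, the involution $\tau_i$ corresponds to $\mathrm{id}_{K_i} \otimes \sigma_3$: by construction $\tau_i$ is trivial on $K_i$, while on $K_3$ it induces the nontrivial element of $\Aut(K_3/F)$, since $\tau_1|_{K_3} = \tau_2|_{K_3}$ (they differ by $\tau_3|_{K_3} = \mathrm{id}$). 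The extended embedding $\Phi_i : K \to C$ is literally $\Phi_i \otimes \mathrm{id}_{K_3}$, while $\sigma_3$ acts on $C$ as $\mathrm{id}_B \otimes \sigma_3$, so the identity is immediate.

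Applying $\sigma_3$ to the numerator of \eqref{better s} then yields
\[ \bigl(\Phi_1(y^{\tau_1}) - \Phi_2(y^{\tau_2 \tau_3})\bigr)^2 = \bigl(\Phi_1(y^{\tau_1}) - \Phi_2(y^{\tau_1})\bigr)^2, \]
using $\tau_2 \tau_3 = \tau_1$ in the Klein four group. For the denominator, $(y - y^{\tau_3})^2 \in K_3$ (since $y - y^{\tau_3}$ lies in the $-1$-eigenspace of $\tau_3$), and on $K_3$ we have $\sigma_3 = \tau_1|_{K_3}$, so
\[ ((y - y^{\tau_3})^2)^{\sigma_3} = ((y - y^{\tau_3})^{\tau_1})^2 = (y^{\tau_1} - y^{\tau_2})^2, \]
using $\tau_1 \tau_3 = \tau_2$. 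Combining these gives
\[ \mathbf{s}^{\sigma_3} = \frac{(\Phi_1(y^{\tau_1}) - \Phi_2(y^{\tau_1}))^2}{(y^{\tau_1} - y^{\tau_2})^2}. \]

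Finally, I would set $\tilde y = y^{\tau_1}$. This is again a $K_3$-algebra generator of $K$, since $\tau_1$ is an $F$-automorphism of $K$ carrying $K_3$ to itself, and $\tilde y^{\tau_3} = y^{\tau_1 \tau_3} = y^{\tau_2}$, so the above rewrites as
\[ \mathbf{s}^{\sigma_3} = \frac{(\Phi_1(\tilde y) - \Phi_2(\tilde y))^2}{(\tilde y - \tilde y^{\tau_3})^2}. \]
A direct check under the substitution $y \mapsto ay + b$ with $a \in K_3^\times$ and $b \in K_3$ shows that this right-hand side is independent of the choice of $K_3$-algebra generator, so the claimed formula holds for the original $y$ as well. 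The main obstacle is just the Klein-four bookkeeping; once the core compatibility $\sigma_3 \circ \Phi_i = \Phi_i \circ \tau_i$ is pinned down, everything reduces to direct substitution.
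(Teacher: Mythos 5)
Your proof is correct and takes essentially the same approach as the paper: both hinge on the compatibility $\Phi_i(a)^{\sigma_3} = \Phi_i(a^{\tau_i})$, the identification $\sigma_3 = \tau_1|_{K_3}$, and independence of the relevant expression from the choice of $K_3$-algebra generator. The paper just performs the steps in the opposite order (it first rewrites $\mathbf{s}$ using the generator $y^{\tau_1}$, citing Proposition \ref{prop:st}, and then applies $\sigma_3$), while you apply $\sigma_3$ first and verify the generator-independence of the resulting expression by a direct substitution $y \mapsto ay+b$; the content is identical.
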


\begin{proof}
Recall from Proposition \ref{prop:st} that $\mathbf{s}$ is independent of the choice of $K_3$-algebra generator $y\in K$ in \eqref{better s}.
As $y^{\tau_1}$ is also such a generator, we therefore have 
\[
\mathbf{s} = \frac{ (  \Phi_1(y^{\tau_1}) - \Phi_2(y^{\tau_2}) )^2  }{(y^{\tau_1}-y^{\tau_2})^2}.
\]
Now note that every $a \in K$ satisfies $\Phi_i(a)^{\sigma_3} = \Phi_i(a^{\tau_i})$, so that 
\[
 \big[ \Phi_1(y^{\tau_1}) - \Phi_2(y^{\tau_2}) \big]^{\sigma_3} = \Phi_1(y) - \Phi_2(y),
\]
and that  $\sigma_3 = \tau_1|_{K_3}$, so that 
\[
\big[ (y^{\tau_1}-y^{\tau_2})^2 \big] ^{\sigma_3}=\big[ (y^{\tau_1}-y^{\tau_2})^2 \big] ^{\tau_1} =  (y-y^{\tau_3})^2.
\]
Applying $\sigma_3$ to the above expression for $\mathbf{s}$ therefore proves the claim.
\end{proof}

\begin{proposition}\label{prop:st properties}
The elements $\mathbf{s}$ and $\mathbf{t}$ satisfy
\[
 \mathbf{s}+\mathbf{s}^{\sigma_3}=1 , \qquad \mathbf{s}\mathbf{s}^{\sigma_3}+\mathbf{t}^2=0 .
 \]
\end{proposition}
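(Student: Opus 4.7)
The plan is to fix a $K_3$-algebra generator $y \in K$ and reduce both identities to elementary manipulations in $C$. Introduce the shorthand $a = \Phi_1(y)$, $b = \Phi_2(y)$, $b' = \Phi_2(y^{\tau_3})$, and set $u = a - b$, $v = a - b'$, $c = y - y^{\tau_3}$. Then the definition \eqref{better s}, Lemma \ref{lem:s conj}, and the formula \eqref{another t} give the three basic expressions
\[
\mathbf{s} = \frac{v^2}{c^2}, \qquad \mathbf{s}^{\sigma_3} = \frac{u^2}{c^2}, \qquad \mathbf{t} = \frac{uv}{c^2}.
\]
Writing $p = y + y^{\tau_3}$ and $q = y y^{\tau_3}$, both elements of $K_3$ and therefore central in $C$, the relation $y^2 = py - q$ in $K$ transports via $\Phi_1$ and $\Phi_2$ to $a^2 = pa - q$, $b^2 = pb - q$, $b'^2 = pb' - q$, $b + b' = p$, and $bb' = q$.

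For the first identity, the plan is to expand
\[
v^2 + u^2 = (a-b')^2 + (a-b)^2 = 2a^2 - a(b+b') - (b+b')a + b^2 + b'^2,
\]
and exploit the centrality of $p = b + b'$ to collapse the middle terms to $-2pa$. Substituting $2a^2 = 2pa - 2q$ and $b^2 + b'^2 = p(b+b') - 2q = p^2 - 2q$ yields $v^2 + u^2 = p^2 - 4q$. The same calculation performed in the commutative algebra $K$ shows $c^2 = p^2 - 4q$, so $v^2 + u^2 = c^2$ and the first identity follows by dividing by $c^2$.

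For the second identity, the key observation is the anticommutation $uv = -vu$. A direct expansion gives $uv = pa - ab' - ba$ and $vu = pa - ab - b'a$, so
\[
uv + vu = 2pa - a(b+b') - (b+b')a = 2pa - 2pa = 0.
\]
From $uv = -vu$ one gets $uv^2 = (uv)v = -v(uv) = v(vu) = v^2 u$, hence $u$ commutes with $v^2$, and consequently $u^2 v^2 = v^2 u^2$. Moreover $(uv)^2 = u(vu)v = -u(uv)v = -u^2 v^2$. Combining,
\[
c^4\bigl(\mathbf{s}\mathbf{s}^{\sigma_3} + \mathbf{t}^2\bigr) = v^2 u^2 + (uv)^2 = u^2 v^2 - u^2 v^2 = 0.
\]

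There is no real obstacle here beyond organizing the bookkeeping: once the formulas $\mathbf{s} = v^2/c^2$, $\mathbf{s}^{\sigma_3} = u^2/c^2$, $\mathbf{t} = uv/c^2$ are in hand, the proposition reduces to two identities in $C$, the first purely symmetric and the second hinging on the anticommutation $uv + vu = 0$. The minor subtlety is remembering that $p, q \in K_3$ are central in $C$ while $y, y^{\tau_3}$ themselves are not, so that simplifications like $a(b+b') = (b+b')a$ are legal whereas $ab = ba$ is not.
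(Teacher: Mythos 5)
Your proof is correct and follows essentially the same route as the paper: both arguments reduce the proposition to the two identities $u^2+v^2=(y-y^{\tau_3})^2$ and $uv=-vu$ for $u=\Phi_1(y)-\Phi_2(y)$, $v=\Phi_1(y)-\Phi_2(y^{\tau_3})$, starting from \eqref{better s}, Lemma \ref{lem:s conj}, and \eqref{another t}. The only (harmless) difference is that you verify these two identities by direct expansion using the minimal polynomial $y^2=py-q$ and the centrality of $p,q\in K_3$, whereas the paper deduces the anticommutation from the intertwining relations \eqref{intertwine 1}--\eqref{intertwine 2} and gets the sum of squares from the factorization $(u+v)^2=\Phi_1(y-y^{\tau_3})^2$.
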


\begin{proof}
Abbreviate $\alpha= \Phi_1(y)  - \Phi_2(y)$ and,  recalling \eqref{intertwine 1},
\[
\beta= \Phi_1(y) - \Phi_2( y^{\tau_3} ) = \Phi_2(y) -\Phi_1(y^{\tau_3}).
\]
The relation \eqref{intertwine 2} shows that 
\[
\alpha\beta = \Phi_1(y) \beta - \Phi_2(y)\beta = \beta \Phi_2(y) - \beta \Phi_1(y) = - \beta \alpha,
\]
from which it follows that
$
\alpha^2+\beta^2=(\alpha+\beta)^2 =  (y-y^{\tau_3})^2.
$
Combining this with   Lemma \ref{lem:s conj} yields
\[
 \mathbf{s} + \mathbf{s}^{\sigma_3} 
 = \frac{ \beta^2 }{ (y-y^{\tau_3})^2  }  + \frac{ \alpha^2  }{  (y-y^{\tau_3})^2   }   = 1.
\]
In particular $\mathbf{s}$ and $\mathbf{s}^{\sigma_3}$ commute.  
Using the formula for $\mathbf{t}$ found in  \eqref{another t},  we obtain
\[
 \mathbf{s}^{\sigma_3}\mathbf{s}+\mathbf{t}^2 
= \frac{    \alpha^2  \beta^2 } {  (y-y^{\tau_3})^4 } + \frac{  \alpha\beta \alpha\beta }{   (y-y^{\tau_3})^4 }
=  \frac{  \alpha( \alpha\beta+\beta\alpha)\beta }{   (y-y^{\tau_3})^4 }=0,
\]
completing the proof.
\end{proof}

\begin{proposition}\label{prop:functional}
The invariant polynomial $\Inv=\Inv_{(\Phi_1,\Phi_2)}$  satisfies the functional equation
\begin{equation*}
(-1)^h  \cdot \Inv (1-T) =  \Inv^{\sigma_3}  (T),
\end{equation*}
where $\Inv^{\sigma_3}  $ is obtained from $\Inv$ by applying the nontrivial automorphism $\sigma_3\in \Aut(K_3/F)$ coefficient-by-coefficient.
\end{proposition}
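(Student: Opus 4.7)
The plan is to leverage the two identities $\mathbf{s}+\mathbf{s}^{\sigma_3}=1$ and $\Inv^2=P_{\mathbf{s}}$ supplied by Proposition \ref{prop:st properties} and Definition \ref{def:invariant polynomial}. Everything else is essentially formal manipulation of reduced characteristic polynomials.

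First I would compute the reduced characteristic polynomial of $\mathbf{s}^{\sigma_3}$ in two ways. On one hand, since $\sigma_3$ is an $F$-algebra automorphism of $C$ (induced by $\mathrm{id}\otimes\sigma_3$) that restricts to $\sigma_3$ on the center $K_3$, applying it to $\mathbf{s}$ applies $\sigma_3$ coefficient-by-coefficient to the reduced characteristic polynomial; hence $P_{\mathbf{s}^{\sigma_3}}(T) = P_{\mathbf{s}}^{\sigma_3}(T)$. On the other hand, the first identity of Proposition \ref{prop:st properties} says $\mathbf{s}^{\sigma_3}=1-\mathbf{s}$, and a direct calculation (working in a splitting field, where reduced char poly becomes the ordinary one) shows that the reduced characteristic polynomial of $1-\mathbf{s}$ is
\[
\det\bigl(T\cdot I - (1-\mathbf{s})\bigr) = (-1)^{2h} P_{\mathbf{s}}(1-T) = P_{\mathbf{s}}(1-T),
\]
where the degree $2h$ comes from the fact that $C=B\otimes_F K_3$ has reduced dimension $2h$ over $K_3$ (its $K_3$-dimension is $4h^2$). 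Combining the two expressions yields the polynomial identity
\[
P_{\mathbf{s}}(1-T) = P_{\mathbf{s}}^{\sigma_3}(T) \in K_3[T].
\]

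Next I would substitute $P_{\mathbf{s}}=\Inv^2$ to obtain $\Inv(1-T)^2 = \bigl(\Inv^{\sigma_3}(T)\bigr)^2$ in $K_3[T]$. Since $K_3[T]$ is a UFD, two polynomials with equal squares agree up to a global sign. To pin down the sign, I would check leading coefficients: $\Inv$ is monic of degree $h$, so $\Inv(1-T)$ has leading coefficient $(-1)^h$, while $\Inv^{\sigma_3}(T)$ is monic (since $\sigma_3$ fixes $1$). Thus both $(-1)^h\Inv(1-T)$ and $\Inv^{\sigma_3}(T)$ are monic of degree $h$ with equal squares, forcing equality.

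There is no real obstacle here once Proposition \ref{prop:st properties} is in hand; the only subtle point is verifying that the reduced dimension of $C$ over $K_3$ is indeed $2h$, which I would handle by noting that $C$ is central simple over $K_3$ when $K_3$ is a quadratic field extension, and splits as a product of two central simple $F$-algebras otherwise, so in either case $\dim_{K_3} C = 4h^2$ gives reduced dimension $2h$. The degree and sign bookkeeping in the last step is what makes the factor of $(-1)^h$ in the functional equation appear naturally.
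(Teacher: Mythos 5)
Your proof is correct and follows essentially the same route as the paper: both use the relation $\mathbf{s}+\mathbf{s}^{\sigma_3}=1$ of Proposition \ref{prop:st properties} to get $P_{\mathbf{s}}(1-T)=P_{\mathbf{s}}^{\sigma_3}(T)$ (the paper by triangularizing a general pair $a+b=1$ after extending scalars, you by the equivalent determinant computation together with the Galois-equivariance $P_{\mathbf{s}^{\sigma_3}}=P_{\mathbf{s}}^{\sigma_3}$, which the paper uses implicitly) and then take the unique monic square root with the same $(-1)^h$ leading-coefficient bookkeeping. One small quibble: when $K_1\iso K_2$ one has $K_3\iso F\times F$, so $K_3[T]$ is not a UFD as you claim, but uniqueness of the monic square root of fixed degree still holds componentwise in $F[T]\times F[T]$, so your sign argument goes through unchanged.
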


\begin{proof}
In general,  if  $a,b\in C$ satisfy $a+b=1$, then their  reduced characteristic polynomials $P_a , P_b  \in K_3[T]$ satisfy  $P_a(1-T) =  P_b(T)$.
 Indeed, after extending scalars one may assume that $C \iso M_{2h}(K_3)$,  and that $a$ and $b$ 
 are upper triangular matrices.  In this case the claim is obvious.

Applying this  to the relation $\mathbf{s}+\mathbf{s}^{\sigma_3}=1$ of Proposition \ref{prop:st properties} shows that 
\[
 P_{\mathbf{s}}(1-T) = P_{\mathbf{s}}^{\sigma_3} (T) ,
\] 
and taking the unique monic square root of each side proves the claim.
\end{proof}


\subsection{The elements $\mathbf{w}$ and $\mathbf{z}$}
\label{ss:wz}


The elements $\mathbf{s}$ and $\mathbf{t}$ defined by \eqref{better s} and \eqref{better t}, being independent of the choice of $y\in K$ used in their construction, are canonically attached to the pair of embeddings \eqref{initial embeddings}.  However, they have the disadvantage that they live in the base change $C=B\otimes_F K_3$ rather than in $B$ itself.  

We now  construct  substitutes for $\mathbf{s}$ and $\mathbf{t}$ that live in $B$, but which depend on  noncanonical choices.   Namely, fix  $F$-algebra generators $x_1\in K_1$ and $x_2\in K_2$,  and define 
\begin{align}
	\mathbf{w} & =  \Phi_1(x_1)\Phi_2(x_2)+\Phi_2(x_2^{\sigma_2})\Phi_1(x_1^{\sigma_1})   \in B  \label{w def}\\
\mathbf{z} & =  \Phi_1(x_1)\Phi_2(x_2)-\Phi_2(x_2)\Phi_1(x_1)  \in B . \label{z def}
\end{align}
If  we  view both $x_1=x_1\otimes 1$ and $x_2=1\otimes x_2$ as elements of $K=K_1 \otimes K_2$,  then
\[
x_1x_2^{\sigma_2}+x_2x_1^{\sigma_1}    \in K_3 \qquad\mbox{and} \qquad
(x_1-x_1^{\sigma_1})(x_2-x_2^{\sigma_2})  \in K_3^\times,
\]
and  so both may be viewed as central elements in $C$.  Similarly, we  view $\mathbf{w},\mathbf{z} \in C$.

\begin{proposition}\label{prop:alt st}
The elements \eqref{better s} and \eqref{better t} satisfy
\begin{align*} 
	\mathbf{s}   = \frac{-(x_1x_2^{\sigma_2}+x_2x_1^{\sigma_1})+  \mathbf{w}  }{(x_1-x_1^{\sigma_1})(x_2-x_2^{\sigma_2})}  \quad \mbox{and} \quad
	\mathbf{t}   = \frac{ \mathbf{z} }{(x_1-x_1^{\sigma_1})(x_2-x_2^{\sigma_2})}   . 
\end{align*}
\end{proposition}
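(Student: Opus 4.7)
The plan is to deduce the identities directly from the more general formulas \eqref{general s} and \eqref{general t} established inside the proof of Proposition \ref{prop:st}, specialized to the choice $y_1 = x_1 \otimes 1$ and $y_2 = 1 \otimes x_2$ inside $K = K_1\otimes_F K_2$.

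First, I would confirm that this choice meets the hypothesis of Proposition \ref{prop:st}, namely that $y_1$ and $y_2$ are each $K_3$-algebra generators of $K$. Since $\tau_3$ acts as $\sigma_1\otimes\sigma_2$, we have $y_1 - y_1^{\tau_3} = (x_1 - x_1^{\sigma_1})\otimes 1$ and $y_2 - y_2^{\tau_3} = 1\otimes (x_2 - x_2^{\sigma_2})$. Because $x_i$ is an $F$-algebra generator of the \'etale quadratic algebra $K_i$, the element $x_i - x_i^{\sigma_i}$ is a unit in $K_i$, so $y_i - y_i^{\tau_3}$ is a unit in $K$ and hence $K = K_3[y_i]$. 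In particular, the product of the two denominators appearing in \eqref{general s} and \eqref{general t} is exactly $(x_1 - x_1^{\sigma_1})(x_2 - x_2^{\sigma_2})$, matching the denominators claimed in the proposition.

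Next, I would track how the extended embeddings of \eqref{induced embeddings} evaluate on $y_1$ and $y_2$. By construction, $\Phi_1 : K \to C$ is the unique $K_3$-algebra extension of $\Phi_1 : K_1\to B$; under the first isomorphism of \eqref{extension of scalars}, $y_1 = x_1\otimes 1 \in K_1\otimes_F K_2$ corresponds to $x_1\otimes 1 \in K_1\otimes_F K_3$, so $\Phi_1(y_1) = \Phi_1(x_1) \in B\subset C$ and likewise $\Phi_1(y_1^{\tau_3}) = \Phi_1(x_1^{\sigma_1})$. Using the second isomorphism of \eqref{extension of scalars} symmetrically gives $\Phi_2(y_2) = \Phi_2(x_2)$ and $\Phi_2(y_2^{\tau_3}) = \Phi_2(x_2^{\sigma_2})$. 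Substituting these values into the numerator of \eqref{general s} produces $-(x_1 x_2^{\sigma_2} + x_2 x_1^{\sigma_1}) + \mathbf{w}$, and substituting them into the numerator of \eqref{general t} produces $\mathbf{z}$. Dividing by $(x_1 - x_1^{\sigma_1})(x_2 - x_2^{\sigma_2})$ yields the two asserted formulas.

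The only real obstacle is the bookkeeping between the three tensor presentations $K \iso K_1\otimes_F K_2$, $K \iso K_1\otimes_F K_3$, and $K \iso K_2\otimes_F K_3$, together with the action of $\tau_3$ on each. As flagged in Remark \ref{screwy 1}, the two maps $\Phi_i : K \to C$ are genuinely distinct even when $K_1 = K_2$, so one must take care not to conflate the identifications; once one fixes notation so that $x_i \in K_i$ is simultaneously viewed as an element of $K$ via the first tensor factor, the substitution above is unambiguous.
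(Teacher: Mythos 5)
Your proposal is correct and is essentially the paper's own argument: the paper's proof simply takes $y_1=x_1$ and $y_2=x_2$ (viewed in $K$ via the first, resp.\ second, tensor factor) in \eqref{general s} and \eqref{general t}, exactly as you do. Your extra verifications---that $x_i\otimes 1$ and $1\otimes x_2$ are $K_3$-algebra generators of $K$ and that the extended embeddings \eqref{induced embeddings} send them to $\Phi_1(x_1)$ and $\Phi_2(x_2)$---are precisely the bookkeeping the paper leaves implicit in the discussion preceding the proposition.
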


\begin{proof}
Take $y_1=x_1$ and $y_2=x_2$ in \eqref{general s} and \eqref{general t}. 
\end{proof}

\begin{proposition}\label{prop:wz properties}
The elements $\mathbf{w}$ and  $\mathbf{z}$ commute,  and satisfy 
\[
  \mathbf{w} \cdot \Phi_i(x)   =\Phi_i(x)   \cdot \mathbf{w} , \qquad 
 \mathbf{z} \cdot \Phi_i(x)    =\Phi_i(x^{\sigma_i})   \cdot \mathbf{z}
\]
 for all $x \in K_i$.  Moreover, they are related by 
\[
 \mathbf{z}^2
= \mathbf{w}^2 - \mathrm{Tr}(x_1)\mathrm{Tr}(x_2)\mathbf{w} + \mathrm{Tr}(x_1^2)\mathrm{Nm}(x_2)+\mathrm{Tr}(x_2^2)\mathrm{Nm}(x_1) ,
\]
where $\mathrm{Tr}$ and $\mathrm{Nm}$ denote   the trace and norm from $K_1$ or $K_2$ to $F$, as appropriate.
\end{proposition}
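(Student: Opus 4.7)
The plan is to deduce every assertion from Proposition \ref{prop:alt st} together with the properties of $\mathbf{s}$ and $\mathbf{t}$ already established in Propositions \ref{prop:st} and \ref{prop:st properties}. First I would rewrite Proposition \ref{prop:alt st} in the cleaner form
\[
\mathbf{w} = M + N\mathbf{s}, \qquad \mathbf{z} = N\mathbf{t},
\]
where I abbreviate
\[
M = x_1 x_2^{\sigma_2} + x_2 x_1^{\sigma_1} \in K_3, \qquad N = (x_1 - x_1^{\sigma_1})(x_2 - x_2^{\sigma_2}) \in K_3^\times.
\]
Both $M$ and $N$ lie in the center of $C$, which is the key structural observation that makes the reduction to $(\mathbf{s},\mathbf{t})$ possible.

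Because $M$ and $N$ are central in $C$, the first equality in \eqref{st commutation} gives $\mathbf{w}\cdot\Phi_i(x) = \Phi_i(x)\cdot\mathbf{w}$ for every $x \in K$, and in particular for $x \in K_i$. For $\mathbf{z}$, the second equality in \eqref{st commutation} gives $\mathbf{z}\cdot\Phi_i(x) = \Phi_i(x^{\tau_3})\cdot\mathbf{z}$; since the restriction of $\tau_3 = \sigma_1\otimes\sigma_2$ to $K_i \subset K$ agrees with $\sigma_i$, this is exactly the required identity. The commutativity $\mathbf{w}\mathbf{z} = \mathbf{z}\mathbf{w}$ is inherited from $\mathbf{s}\mathbf{t} = \mathbf{t}\mathbf{s}$ of Proposition \ref{prop:st}. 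These identities are a priori statements in $C$, but they involve only elements of the subalgebra $B \subset C$ and so hold in $B$.

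For the final quadratic relation, I would first eliminate $\mathbf{s}^{\sigma_3}$ between the two identities of Proposition \ref{prop:st properties} to obtain $\mathbf{t}^2 = \mathbf{s}^2 - \mathbf{s}$. Multiplying by $N^2$ and substituting $N\mathbf{s} = \mathbf{w} - M$ then yields
\[
\mathbf{z}^2 = (\mathbf{w} - M)^2 - N(\mathbf{w} - M) = \mathbf{w}^2 - (2M + N)\mathbf{w} + (M^2 + NM).
\]
It then remains only to verify, inside $K = K_1\otimes_F K_2$, the two central identities
\[
2M + N = \mathrm{Tr}(x_1)\mathrm{Tr}(x_2), \qquad M^2 + NM = \mathrm{Tr}(x_1^2)\mathrm{Nm}(x_2) + \mathrm{Tr}(x_2^2)\mathrm{Nm}(x_1).
\]
The first is immediate upon expansion. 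For the second I would note the pleasant simplification $M + N = x_1 x_2 + x_1^{\sigma_1} x_2^{\sigma_2}$ and then expand $M(M+N)$ directly, grouping terms to produce $\mathrm{Nm}(x_2)\bigl(x_1^2 + (x_1^{\sigma_1})^2\bigr)$ and $\mathrm{Nm}(x_1)\bigl(x_2^2 + (x_2^{\sigma_2})^2\bigr)$.

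No real obstacle remains after Proposition \ref{prop:st properties}: the commutation statements are a formal consequence of the centrality of $M,N$ and the corresponding statements for $\mathbf{s},\mathbf{t}$, and the quadratic identity reduces to a short symmetric-function calculation in $K_3$.
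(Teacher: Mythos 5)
Your proposal is correct and takes essentially the same route as the paper: the commutation statements follow from Proposition \ref{prop:alt st} combined with Proposition \ref{prop:st}, and the quadratic relation from Proposition \ref{prop:st properties}, with the closing identities in $K_3$ checked exactly as you describe (and valid in characteristic $2$ as well). The only cosmetic difference is that you eliminate $\mathbf{s}^{\sigma_3}$ first to get $\mathbf{t}^2=\mathbf{s}^2-\mathbf{s}$, whereas the paper writes $\mathbf{z}^2=(v\mathbf{s})(v\mathbf{s})^{\sigma_3}=(\mathbf{w}-u)(\mathbf{w}-u^{\sigma_3})$ using $v^{\sigma_3}=-v$ and then expands; this is the same computation in a different order.
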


\begin{proof}
Everything except the final claim follows from  Proposition \ref{prop:alt st} and the analogous properties of $\mathbf{s}$ and $\mathbf{t}$ proved in Proposition \ref{prop:st}.  
For the final claim define elements $u,v \in K_3$ by 
\[
u = x_1x_2^{\sigma_2}+x_2x_1^{\sigma_1} \quad \mbox{and}\quad  v = (x_1-x_1^{\sigma_1})(x_2-x_2^{\sigma_2}),
\]
so that $v \mathbf{s} = \mathbf{w} -u$  and $  \mathbf{z}^2  =  v^2 \mathbf{t}^2$ by Proposition \ref{prop:alt st}.
 Using the relation $\mathbf{s}\mathbf{s}^{\sigma_3}=-\mathbf{t}^2$ from Proposition \ref{prop:st properties}  and  $v^{\sigma_3} =v^{\tau_1}= -v$, we compute
\[
  \mathbf{z}^2  =  v^2 \mathbf{t}^2 
    = ( v \mathbf{s} )( v  \mathbf{s})^{\sigma_3} =    ( \mathbf{w} - u )( \mathbf{w} - u ^{\sigma_3 } ) .
\]
To obtain the desired formula for $\mathbf{z}^2$,  expand  the right hand side  and use the relation
$u^{\sigma_3} = u^{\tau_1}= x^{\sigma_1}_1x_2^{\sigma_2}+ x_2 x_1$.
\end{proof}


\subsection{Regular semisimple pairs}


We have already noted in Remark \ref{rem:conj invariant} that the invariant polynomial of the pair $(\Phi_1,\Phi_2)$ 
fixed in  \S \ref{ss:polynomial}  depends only on its $B^\times$-conjugacy class. 
 In this subsection we will show that for a pair that is regular semisimple, in the following sense, the invariant polynomial of Definition \ref{def:invariant polynomial} determines the conjugacy class.

\begin{definition}\label{def:rss}
The pair  $(\Phi_1,\Phi_2)$ is \emph{regular semisimple} if $\mathbf{s} \in C^\times$  and  $K_3[\mathbf{s}] \subset C$ is  an \'etale $K_3$-subalgebra of dimension $h$.
\end{definition}

\begin{remark}
In Proposition \ref{prop:truerss} we will show that Definition \ref{def:rss} is compatible with the usual notion of regular semisimple from geometric invariant theory.
\end{remark}

\begin{remark}\label{rem:ssunits}
Using the relation $\mathbf{t}^2 = - \mathbf{s}\mathbf{s}^{\sigma_3}$ of Proposition \ref{prop:st properties}, we see that $\mathbf{s} \in C^\times$ if and only if  $\mathbf{t}\in C^\times$.
\end{remark}

The following proposition shows that one can characterize regular semisimple pairs using the invariant polynomial.

\begin{proposition}\label{prop:rss}
The pair $(\Phi_1,\Phi_2)$ is regular semisimple if and only if for every  $F$-algebra map $\rho : K_3 \to F^\alg$ the image  
\begin{equation*}
\Inv_{(\Phi_1,\Phi_2)}^\rho (T)  \in F^\alg[T]
\end{equation*}
of the invariant polynomial has $h$ distinct nonzero roots.
\end{proposition}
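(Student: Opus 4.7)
My plan is to base-change along each of the two $F$-algebra maps $\rho : K_3 \to F^{\alg}$, reducing all conditions to elementary linear algebra in $M_{2h}(F^{\alg})$. Write $\bar C = C\otimes_{K_3,\rho} F^{\alg} \cong M_{2h}(F^{\alg})$ and let $\bar{\mathbf{s}}$ denote the image of $\mathbf{s}$. By the definition of the reduced characteristic polynomial, the ordinary characteristic polynomial of $\bar{\mathbf{s}} \in M_{2h}(F^{\alg})$ equals $P_{\mathbf{s}}^{\rho} = (\Inv^{\rho})^2$, so the eigenvalues of $\bar{\mathbf{s}}$ (with multiplicity) are precisely the roots of $(\Inv^{\rho})^2$.

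Two compatibilities drive the argument. First, $\mathbf{s}\in C^\times$ if and only if $\bar{\mathbf{s}}$ is invertible for both $\rho$, if and only if $\Inv^{\rho}(0)\neq 0$ for both $\rho$; this uses only that invertibility in $C$ is detected by $P_{\mathbf{s}}(0)\in K_3^\times$. Second, the minimal polynomial $m_{\mathbf{s}}\in K_3[T]$ of $\mathbf{s}$ base-changes to the minimal polynomial of $\bar{\mathbf{s}}$. Indeed $K_3[\mathbf{s}]\cong K_3[T]/(m_{\mathbf{s}})$ is a free $K_3$-module since $m_{\mathbf{s}}$ is monic, so the inclusion $K_3[\mathbf{s}]\hookrightarrow C$ remains injective after $\otimes_{K_3} F^{\alg}$, identifying $F^{\alg}[T]/(m_{\mathbf{s}}^{\rho})$ with $F^{\alg}[\bar{\mathbf{s}}]\subset \bar C$ and forcing $m_{\mathbf{s}}^{\rho}= m_{\bar{\mathbf{s}}}$. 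Consequently $K_3[\mathbf{s}]$ is \'etale of dimension $h$ over $K_3$ precisely when, for each $\rho$, $m_{\mathbf{s}}^{\rho}$ is squarefree of degree $h$.

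I then combine these with the divisibility $m_{\bar{\mathbf{s}}}\mid \Inv^{\rho}$ in $F^{\alg}[T]$ supplied by Proposition \ref{prop:square}. If $\Inv^{\rho}$ has $h$ distinct nonzero roots, then it is squarefree of degree $h$, its divisor $m_{\bar{\mathbf{s}}}$ is squarefree, and the roots of $m_{\bar{\mathbf{s}}}$ coincide with the eigenvalues of $\bar{\mathbf{s}}$, namely all $h$ roots of $\Inv^{\rho}$; therefore $m_{\bar{\mathbf{s}}}=\Inv^{\rho}$ has degree $h$, and $\Inv^{\rho}(0)\neq 0$ gives invertibility. Conversely, if the pair is regular semisimple, then for each $\rho$ the polynomial $m_{\bar{\mathbf{s}}}=m_{\mathbf{s}}^{\rho}$ is squarefree of degree $h$ and divides the monic degree-$h$ polynomial $\Inv^{\rho}$, so they are equal; squarefreeness gives $h$ distinct roots, and invertibility of $\bar{\mathbf{s}}$ (from $\mathbf{s}\in C^\times$) rules out $0$ as a root.

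The main subtlety is the base-change compatibility of the minimal polynomial, which must be handled uniformly in the cases where $K_3$ is a field and where $K_3\cong F\times F$ (so $C$ is not simple); the key input is merely the flatness, indeed freeness, of $K_3[\mathbf{s}]$ over $K_3$, coming from the monicity of $m_{\mathbf{s}}$.
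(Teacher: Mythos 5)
Your proof is correct and follows essentially the same route as the paper's: both arguments compare the minimal polynomial of $\mathbf{s}$ with $\Inv_{(\Phi_1,\Phi_2)}$, using Proposition \ref{prop:square} for the divisibility, the identification of the roots of the minimal polynomial with the eigenvalues of $\bar{\mathbf{s}}$ (i.e.\ the roots of $P_{\mathbf{s}}^{\rho}=(\Inv^{\rho})^{2}$), and the dictionary between $K_3[\mathbf{s}]$ being \'etale of dimension $h$ with $\mathbf{s}\in C^\times$ and $\Inv^{\rho}$ having $h$ distinct nonzero roots. One small repair: when $K_3\iso F\times F$ the kernel of $K_3[T]\to C$ need not admit a monic generator and $K_3[\mathbf{s}]$ need not be free over $K_3$ (the two component minimal polynomials could a priori have different degrees), but the base-change compatibility you need still holds for the simpler reason that $K_3$ is a finite product of fields, so $F^{\alg}$ is flat over $K_3$ and one can argue componentwise.
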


\begin{proof}
Recall from Proposition \ref{prop:square}  that $\mathbf{s} \in C$ is a zero  of the invariant polynomial
$Q_\mathbf{s} = \Inv_{(\Phi_1,\Phi_2)}$, and so there is a surjection
\begin{equation}\label{s presentation}
K_3[T] / ( Q_\mathbf{s}  )  \map{T \mapsto \mathbf{s}}  K_3[\mathbf{s} ]  .
\end{equation}

 If $(\Phi_1,\Phi_2)$ is regular semisimple then the surjection  \eqref{s presentation}
is an isomorphism for dimension reasons, and so the domain is an \'etale $F$-algebra in which $T$ is a unit.
It follows that $Q_\mathbf{s}^\rho$ has $h$ distinct nonzero roots for any $\rho : K_3 \to F^\alg$.

For the converse,  suppose that $Q_\mathbf{s}^\rho$ has $h$ distinct nonzero roots for any $\rho$.
Let $M_\mathbf{s} \in K_3[T]$ be the minimal polynomial of $\mathbf{s}\in C$, so that  $M_\mathbf{s} \mid Q_\mathbf{s}$, and  \eqref{s presentation} factors as
\[
K_3[T] / ( Q_\mathbf{s}  ) \to K_3[T] / ( M_\mathbf{s} )   \map{T \mapsto \mathbf{s}}  K_3[\mathbf{s} ]  
\]
with the second arrow an isomorphism. 
By elementary linear algebra, the roots of $M_\mathbf{s}^\rho$ in $F^\alg$ are the same as the roots of the characteristic polynomial $P_\mathbf{s}^\rho = Q_\mathbf{s}^\rho \cdot Q_\mathbf{s}^\rho$ of $\mathbf{s} \in C^\rho \iso M_{2h}(F^\alg)$, which are same as the  roots of $Q_\mathbf{s}^\rho$.  
It now follows from our assumptions on $Q_\mathbf{s}$ that  $M_\mathbf{s} =Q_\mathbf{s}$, and so 
  \eqref{s presentation} is an isomorphism whose domain is an \'etale $F$-algebra of dimension $h$ in which $T$ is a unit.  Thus $(\Phi_1,\Phi_2)$ is regular semisimple.
\end{proof}

The  $F$-subalgebra generated by $\Phi_1(K_1) \cup \Phi_2(K_2) \subset B$ is denoted
\[
F(\Phi_1,\Phi_2) \subset B.
\]
Recall the elements $\mathbf{w},\mathbf{z} \in F(\Phi_1,\Phi_2)$ of \eqref{w def} and \eqref{z def}, and set
\[
L = F[\mathbf{w}] \subset B.
\]
Although the element $\mathbf{w}$ depends on the choices of $x_1\in K_1$ and $x_2\in K_2$, the subalgebra $L$ does not.
For example, using Proposition \ref{prop:alt st} we see that 
\begin{equation}\label{canonical subalgebra}
L\otimes_F K_3 = K_3[ \mathbf{w} ] = K_3 [ \mathbf{s} ] \subset C
\end{equation}
 does not depend on the choices of $x_1$ and $x_2$ (because $\mathbf{s}$ does not), and $L$ is characterized as the elements in \eqref{canonical subalgebra} fixed by the $F$-linear automorphism $\sigma_3=\mathrm{id} \otimes \sigma_3$ of $C=B\otimes_F K_3$.

\begin{proposition}\label{prop:central algebra}
If the pair  $(\Phi_1,\Phi_2)$ is regular semisimple, the following properties hold.
\begin{enumerate}
\item
The element $\mathbf{z} \in B$ is a unit.
\item
 The $F$-algebra $L$  is  \'etale  of dimension $h$.
  \item
 The centralizer of $F(\Phi_1,\Phi_2) \subset B$ is $L$.
 \item
 The centralizer of $L \subset B$ is $F(\Phi_1,\Phi_2)$.
 \item
The ring  $F(\Phi_1,\Phi_2)$ is a quaternion algebra over its center $L$.
Writing $L \iso \prod L_i$ as a product of fields,  this  means that each $F(\Phi_1,\Phi_2) \otimes_L L_i$ is a central simple $L_i$-algebra of dimension $4$.
\item
The ring $B$ is free as an $F(\Phi_1,\Phi_2)$-module.
 \end{enumerate}
\end{proposition}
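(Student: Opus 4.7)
The plan is to prove the six parts in the order (1), (2), (5), (3), (4), (6), with (5) the key structural input from which (3), (4), and (6) follow by base-change to a separable closure.

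Part (1) is immediate from Proposition \ref{prop:alt st} ($\mathbf{z} = v\mathbf{t}$ with $v \in K_3^\times$) and Remark \ref{rem:ssunits}, which together with the regular semisimple hypothesis give $\mathbf{z} \in C^\times$; since $B$ is Artinian and $B \hookrightarrow C$ is faithfully flat, $\mathbf{z}$ is already a unit in $B$. Part (2) follows from the identity $L \otimes_F K_3 = K_3[\mathbf{s}]$ of \eqref{canonical subalgebra}: the right-hand side is \'etale of dimension $h$ over $K_3$ by hypothesis, and \'etaleness descends along the finite \'etale extension $K_3/F$.

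For (5), set $\alpha = \Phi_1(x_1)$, $\beta = \Phi_2(x_2)$. Proposition \ref{prop:wz properties} gives $L \subseteq Z(F(\Phi_1,\Phi_2))$. The quadratic relations on $\alpha, \beta$ together with the definition of $\mathbf{w}$ (which places $\alpha\beta+\beta\alpha$ in $L + F\alpha + F\beta$) reduce every word in $\alpha,\beta$ to an $L$-combination of $\{1,\alpha,\beta,\alpha\beta\}$, so $\dim_L F(\Phi_1,\Phi_2) \leq 4$. For the matching lower bound in characteristic $\neq 2$, take $\alpha_0 = \alpha - \mathrm{Tr}(x_1)/2$: then $\alpha_0^2 \in F^\times$ (up to sign, the discriminant of $K_1/F$), $\mathbf{z}^2 \in L^\times$ by (1) and Proposition \ref{prop:wz properties}, and the intertwining $\mathbf{z}\alpha = \alpha^{\sigma_1}\mathbf{z}$ simplifies to $\alpha_0\mathbf{z} + \mathbf{z}\alpha_0 = 0$. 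Hence $L\langle \alpha_0, \mathbf{z} \rangle$ is a genuine quaternion algebra over $L$ of $L$-dimension $4$, forcing equality $\dim_L F(\Phi_1,\Phi_2) = 4$ and establishing (5).

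Parts (3), (4), and (6) then follow by base change to a separable closure $F^{\mathrm{sep}}$. After splitting, $B \otimes F^{\mathrm{sep}} \cong M_{2h}(F^{\mathrm{sep}})$, $L \otimes F^{\mathrm{sep}} \cong (F^{\mathrm{sep}})^h$, and $F(\Phi_1,\Phi_2) \otimes F^{\mathrm{sep}} \cong \prod_{k=1}^h M_2(F^{\mathrm{sep}})$, where the $M_2$-factors act on the $h$ two-dimensional eigenspaces of $\mathbf{s}$ (which has $h$ distinct eigenvalues by Proposition \ref{prop:rss}, each of multiplicity $2$ because $P_\mathbf{s} = Q_\mathbf{s}^2$). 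A direct computation in $M_{2h}$ gives $C_{M_{2h}}(\prod M_2) \cong (F^{\mathrm{sep}})^h$ and $C_{M_{2h}}((F^{\mathrm{sep}})^h) \cong \prod M_2$, yielding (3) and (4) after faithfully flat descent. The same computation displays $M_{2h}$ as a free $\prod M_2$-module of rank $h$ (each block row is free of rank $h$ over its corresponding $M_2$), and this descends to give (6) via the Morita equivalence between $F(\Phi_1,\Phi_2)$ and its \'etale center $L$, which reduces the freeness question to the commutative $L$-level where \'etale descent is routine.

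The main obstacle I anticipate is the lower bound in (5) in characteristic $2$, where the trace-shift $\alpha_0 = \alpha - \mathrm{Tr}(x_1)/2$ is unavailable. The remedy is to identify $F(\Phi_1,\Phi_2)$ directly with the Clifford algebra of a binary quadratic form over $L$ with discriminant $\mathbf{z}^2 \in L^\times$ (nondegeneracy being exactly the content of (1)), and invoke the characteristic-free fact that Clifford algebras of nondegenerate binary quadratic forms over $L$ are quaternion algebras over $L$.
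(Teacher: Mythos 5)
Your overall strategy is sound and lands close to the paper's, but the order of logic is genuinely different. The paper proves a single splitting statement (Lemma \ref{lem:good w}): using $P_{\mathbf{w}}=Q_{\mathbf{w}}^2$ it identifies $L'=L\otimes_F F'$ with the scalar blocks $x_kI_2$ inside $B'\iso M_{2h}(F')$, and then obtains (3)--(6) simultaneously from the inclusion of $F'(\Phi_1,\Phi_2)$ into the block-diagonal $\prod M_2(F')$ together with the footnote fact that $M_2(F')$ is generated by a quadratic \'etale subalgebra and an intertwining unit. You instead prove (5) intrinsically over $F$ (spanning bound $F(\Phi_1,\Phi_2)=L+L\alpha+L\beta+L\alpha\beta$, then a quaternion subalgebra on $\alpha_0,\mathbf{z}$ for the lower bound) and only afterwards base change for (3), (4), (6). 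This is legitimate, and in fact your spanning computation is the same one the paper runs later in Proposition \ref{prop:quaternion recovery}; what it buys is a characteristic-by-hand proof of (5) without a splitting field, at the cost of redoing over $F^{\mathrm{sep}}$ the block analysis the paper gets from Lemma \ref{lem:good w}. Two small points of hygiene: over $F^{\mathrm{sep}}$ you should speak of eigenspaces of $\mathbf{w}\in B$ rather than of $\mathbf{s}\in C=B\otimes_FK_3$ (harmless, since $\mathbf{w}=c+d\mathbf{s}$), and when you call $L\langle\alpha_0,\mathbf{z}\rangle$ a ``genuine'' quaternion algebra you should note why the surjection from the abstract quaternion algebra $(\alpha_0^2,\mathbf{z}^2)_L$ is injective: its kernel is a product of simple factors, and killing a factor would kill an idempotent of $L\subset B$.

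Two steps need repair. First, the characteristic $2$ remedy is not right as stated: $F(\Phi_1,\Phi_2)$ is not visibly the Clifford algebra of a binary quadratic form, because $\alpha=\Phi_1(x_1)$ does not square to a scalar (with $x_1^{\sigma_1}=x_1+1$ one has $\alpha^2+\alpha=\mathrm{Nm}(x_1)$), and the trace shift you use in characteristic $\neq 2$ is exactly what is unavailable. The characteristic-free fix is the crossed-product (cyclic algebra) presentation: $L[\alpha]\iso L\otimes_FK_1$ is quadratic \'etale over $L$, $\mathbf{z}^2\in L^\times$, and conjugation by $\mathbf{z}$ induces the nontrivial $L$-automorphism of $L[\alpha]$ (Proposition \ref{prop:wz properties}); hence $L[\alpha,\mathbf{z}]$ is a quotient of the quaternion algebra $(L[\alpha]/L,\sigma,\mathbf{z}^2)$, injectively by the idempotent argument above. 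This is precisely the fact recorded in the paper's footnote. Second, your justification of (6) is incorrect: $F(\Phi_1,\Phi_2)$ is Morita equivalent to its center $L$ only when it is split over $L$, and in the case that matters most ($B=B(\mathscr{G})$ a division algebra) it is a quaternion \emph{division} algebra over $L$, so no such Morita equivalence exists. The conclusion is still true and easy to salvage: either descend freeness directly by the Noether--Deuring theorem (if $B\otimes_FF^{\mathrm{sep}}\iso (F(\Phi_1,\Phi_2)\otimes_FF^{\mathrm{sep}})^h$ as modules, then $B\iso F(\Phi_1,\Phi_2)^h$), or argue per factor of $L=\prod L_i$: $B$ is semisimple as a module over $\prod A_i$, and the block description over $F^{\mathrm{sep}}$ gives $\dim_F e_iB=4h\,[L_i:F]=h\cdot\dim_FA_i$, which forces $e_iB\iso A_i^h$ whether $A_i$ is split or division.
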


\begin{proof}
For property (1), note that $\mathbf{t} \in C^\times$ by Remark \ref{rem:ssunits}, and so $\mathbf{z} \in B^\times$ by Proposition \ref{prop:alt st}.
For  property (2),   the $K_3$-algebra \eqref{canonical subalgebra} is \'etale of dimension $h$ by hypothesis, and so  $L$ is \'etale over $F$ of the same dimension.
The remaining properties  rely on the following lemma.  

\begin{lemma}\label{lem:good w}
There is a separable field extension $F'/F$ such that, abbreviating 
\[
B'=B\otimes_FF' , \qquad L'=L\otimes_F F' ,
\]
there exists an isomorphism $B' \iso M_{2h}(F')$ identifying 
 \[
L'
 = \left\{
 \begin{pmatrix} 
 x_1 I_2 & \\ 
 & \ddots \\
 & & x_h I_2
 \end{pmatrix}  : x_1,\ldots, x_h \in F'
 \right\}.
 \]
 Here $I_2  \in M_2(F')$ is the $2\times 2$ identity matrix.
\end{lemma}

\begin{proof}
Let $P_\mathbf{w} \in F[T]$ be reduced characteristic polynomial of $\mathbf{w} \in B$, and let $Q_\mathbf{w} \in F[T]$ be its minimal polynomial.   We know from Proposition \ref{prop:alt st} that 
\[
\mathbf{w} = c + d \mathbf{s} \in C
\]
for scalars $c\in K_3$ and $d\in K_3^\times$.   This implies that $ P_{\mathbf{w}}( c+dT)\in K_3[T]$ is the reduced characteristic polynomial of $\mathbf{s}$, while $Q_{\mathbf{w}}( c+dT)\in K_3[T]$ is its minimal polynomial. 
 These latter polynomials are precisely the $P_\mathbf{s}$ and $Q_\mathbf{s}$ of Proposition \ref{prop:square} (the first by definition of $P_\mathbf{s}$, and the second by the proof of Proposition \ref{prop:rss}).  As $P_\mathbf{s} = Q_\mathbf{s}^2$, we deduce that $P_\mathbf{w} = Q_\mathbf{w}^2$.

 Choose any $F'/F$ large enough that $B' \iso M_{2h}(F')$.  
 Let   $V$ be the unique simple left $B'$-module; in other words, the standard representation of $M_{2h}(F')$.
It follows from $P_\mathbf{w} = Q_\mathbf{w}^2$  that the list of invariant factors   of the matrix $\mathbf{w} \in B'$ can only be $Q_\mathbf{w} \mid Q_\mathbf{w}$.   Thus
\begin{equation}\label{jordan}
V\iso L'\oplus L'
\end{equation}
as a left modules over the subring $L'=F'[\mathbf{w}] \subset B'$.  

Using (2), we may enlarge $F'$ to assume that $L' \iso F'\times \cdots\times F'$ ($h$ factors).
If $e_1,\ldots, e_h \in L'$ are the orthogonal idempotents inducing this decomposition, it follows from 
\eqref{jordan} that 
\[
V = e_1 V \oplus \cdots \oplus e_h V
\]
with each summand a $2$-dimensional $F'$-subspace on which $L'$ acts through scalars. 
Choose a basis for each summand, and use this to identify $B' \iso \End_{F'}(V) \iso M_{2d}(F')$.
This isomorphism has the desired properties.
\end{proof}

Now we complete the proof of Proposition \ref{prop:central algebra}.
Keep the notation of the lemma, and  abbreviate
\[
F'(\Phi_1,\Phi_2) = F(\Phi_1,\Phi_2) \otimes_F F'.
\]
By Proposition \ref{prop:wz properties}, $\mathbf{w}$ commutes with the image of $\Phi_i : K_i \to B$, and hence $F(\Phi_1,\Phi_2)$ is contained in the centralizer of $L$.   Hence  $F'(\Phi_1,\Phi_2)$ is contained in the centralizer of $L'$, or, in other words, 
\begin{equation}\label{quat inclusion}
F'(\Phi_1,\Phi_2)  \subset \underbrace{ M_2(F') \times \cdots \times M_2(F') }_{h\ \mathrm{times}} ,
\end{equation}
embedded block diagonally into $M_{2h}(F')$.

The $F'$-subalgebra on the left hand side of \eqref{quat inclusion} contains
\[
L' \iso \underbrace{ F' \times \cdots \times F' }_{h\ \mathrm{times}} ,
\]
as well as $\Phi_1(K_1)$, and  a unit  $\mathbf{z} \in  F'(\Phi_1,\Phi_2)$ such that 
$
\mathbf{z}   \Phi_1(x) =  \Phi_1(x^{\sigma_1}) \mathbf{z}
$
 for all $x\in K_1$ (Proposition \ref{prop:wz properties}).  
 From this, it is not hard to see
  first  that equality holds in \eqref{quat inclusion}, and  then that   the $F'$-algebras
 \[
 L' \subset F'(\Phi_1,\Phi_2) \subset B'
 \]
satisfy the obvious analogues of properties (3)-(6).  The $F$-algebras  
 \[
 L \subset F(\Phi_1,\Phi_2) \subset B
 \]
therefore  satisfy those  properties, completing the proof.
\end{proof}

We next show that when $(\Phi_1,\Phi_2)$ is regular semisimple,  the isomorphism class of $F(\Phi_1,\Phi_2)$ as an $F$-algebra is completely determined by the invariant polynomial.

\begin{proposition}\label{prop:quaternion recovery}
Assume that $(\Phi_1,\Phi_2)$ is regular semisimple. 
If  $B'$ is a central simple $F$-algebra of the same dimension as $B$, and if  we are given $F$-algebra embeddings
\[
\Phi_1' : K_1 \to B' ,\qquad \Phi_2' : K_2 \to B'
\]
such that $\Inv_{(\Phi_1,\Phi_2)} = \Inv_{(\Phi'_1,\Phi'_2)}$, then there is an isomorphism of $F$-algebras
\[
F(\Phi_1,\Phi_2) \iso F(\Phi_1',\Phi_2')
\]
identifying $\Phi_1=\Phi_1'$ and $\Phi_2=\Phi_2'$.
\end{proposition}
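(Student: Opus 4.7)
The plan is to exhibit both $F(\Phi_1, \Phi_2)$ and $F(\Phi'_1, \Phi'_2)$ as the same cyclic quaternion algebra, whose defining data depend only on the invariant polynomial and a choice of generators. Fix $F$-algebra generators $x_1 \in K_1$ and $x_2 \in K_2$, and use them for both pairs to form $\mathbf{w}, \mathbf{z} \in B$ and $\mathbf{w}', \mathbf{z}' \in B'$ via \eqref{w def}, \eqref{z def}.

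First I would match the centers $L = F[\mathbf{w}]$ and $L' = F[\mathbf{w}']$. By Proposition \ref{prop:alt st} we have $\mathbf{w} = c + d\mathbf{s}$ with constants $c \in K_3$, $d \in K_3^\times$ depending only on $x_1, x_2$. The computation in the proof of Lemma \ref{lem:good w} then yields $Q_\mathbf{w}(c + dT) = Q_\mathbf{s}(T)$, which in the regular semisimple case equals $\Inv_{(\Phi_1,\Phi_2)}(T)$ (by the proof of Proposition \ref{prop:rss}). Since the invariant polynomials of the two pairs agree, $Q_\mathbf{w} = Q_{\mathbf{w}'}$, so the assignment $\mathbf{w} \mapsto \mathbf{w}'$ extends to an $F$-algebra isomorphism $L \to L'$.

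Next I would recognize both algebras as cyclic algebras with matching data. By Proposition \ref{prop:central algebra}(5), $F(\Phi_1, \Phi_2)$ is quaternion over $L$; since $L$ is its center and $\Phi_1(K_1)$ commutes with $L$, the multiplication map identifies $E := L \cdot \Phi_1(K_1)$ with $L \otimes_F K_1$, a maximal quadratic \'etale $L$-subalgebra. Propositions \ref{prop:wz properties} and \ref{prop:central algebra}(1) provide a unit $\mathbf{z}$ with $\mathbf{z}\,\Phi_1(x) = \Phi_1(x^{\sigma_1})\mathbf{z}$ and
\[
\mathbf{z}^2 = \mathbf{w}^2 - \mathrm{Tr}(x_1)\mathrm{Tr}(x_2)\mathbf{w} + \mathrm{Tr}(x_1^2)\mathrm{Nm}(x_2) + \mathrm{Tr}(x_2^2)\mathrm{Nm}(x_1) \in L.
\]
These relations produce an $L$-algebra map from the cyclic algebra $A := (L \otimes_F K_1 / L, \sigma_1, \mathbf{z}^2)$ onto the $L$-subalgebra of $F(\Phi_1,\Phi_2)$ generated by $E$ and $\mathbf{z}$; since $A$ is Azumaya of $L$-rank $4$, this map is injective, and a dimension count identifies it with $F(\Phi_1,\Phi_2)$. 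The same argument gives $F(\Phi'_1,\Phi'_2) \cong (L' \otimes_F K_1 / L', \sigma_1, (\mathbf{z}')^2)$. Under the isomorphism $L \to L'$ the cyclic algebra data correspond, since both $\mathbf{z}^2$ and $(\mathbf{z}')^2$ are given by the same polynomial in $\mathbf{w}$, $\mathbf{w}'$ with coefficients depending only on $x_1, x_2$. This yields an $F$-algebra isomorphism
\[
F(\Phi_1,\Phi_2) \to F(\Phi'_1,\Phi'_2)
\]
sending $\Phi_1(x_1) \mapsto \Phi'_1(x_1)$, $\mathbf{w} \mapsto \mathbf{w}'$, and $\mathbf{z} \mapsto \mathbf{z}'$.

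Finally, I would verify this isomorphism identifies $\Phi_2 = \Phi'_2$. Expanding \eqref{w def} and \eqref{z def} using $\Phi_i(x_i^{\sigma_i}) = \mathrm{Tr}(x_i) - \Phi_i(x_i)$ gives
\[
\bigl(2\Phi_1(x_1) - \mathrm{Tr}(x_1)\bigr)\,\Phi_2(x_2) = \mathbf{w} + \mathbf{z} - \mathrm{Tr}(x_1)\mathrm{Tr}(x_2) + \mathrm{Tr}(x_2)\Phi_1(x_1),
\]
and $2\Phi_1(x_1) - \mathrm{Tr}(x_1) = \Phi_1(x_1 - x_1^{\sigma_1})$ is a unit in $\Phi_1(K_1)$ (since $x_1$ generates $K_1/F$, so $x_1 - x_1^{\sigma_1}$ is a unit of $K_1$, also in characteristic $2$). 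The identical formula holds with primes, so the isomorphism sends $\Phi_2(x_2) \mapsto \Phi'_2(x_2)$, whence $\Phi_2 = \Phi'_2$. The main thing to verify carefully is that the abstract cyclic algebra $A$ is Azumaya of $L$-rank $4$ and that the natural map into $F(\Phi_1,\Phi_2)$ is an isomorphism; this follows from standard facts about cyclic algebras over \'etale base rings combined with Proposition \ref{prop:central algebra}.
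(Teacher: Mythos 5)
Your proposal is correct, and while it follows the same overall strategy as the paper --- reconstruct $F(\Phi_1,\Phi_2)$, together with its two distinguished embeddings, from data depending only on the invariant polynomial and the chosen generators $x_1,x_2$ --- the implementation is genuinely different. The paper presents an abstract algebra $\mathcal{L}[Z,X_1,X_2]$ by generators and relations (the relations of Proposition \ref{prop:wz properties} together with the identity expressing $\mathbf{w}-\mathbf{z}$ in terms of $\Phi_1(x_1)$ and $\Phi_2(x_2)$), maps it onto $F(\Phi_1,\Phi_2)$, and bounds its dimension by $4h$ by showing it is generated over $\mathcal{L}$ by $Z$ and $X_1$ alone; the special choices of $x_1,x_2$ there serve exactly to invert the coefficient of $X_2$. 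You instead identify $F(\Phi_1,\Phi_2)$ with the quaternion algebra $(L\otimes_F K_1/L,\,\sigma_1,\,\mathbf{z}^2)$ over $L=F[\mathbf{w}]$, getting injectivity of the comparison map from the Azumaya property and surjectivity from the dimension count of Proposition \ref{prop:central algebra}, and then recover $\Phi_2$ from the universal identity $\Phi_1(x_1-x_1^{\sigma_1})\,\Phi_2(x_2)=\mathbf{w}+\mathbf{z}-\mathrm{Tr}(x_1)\mathrm{Tr}(x_2)+\mathrm{Tr}(x_2)\Phi_1(x_1)$, which is the paper's fourth relation in a cleaner form; a pleasant byproduct is that no special choice of generators is needed, since $\Phi_1(x_1-x_1^{\sigma_1})$ is automatically a unit in every characteristic. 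Three small points should be made explicit to complete your argument: (i) $(\Phi_1',\Phi_2')$ is also regular semisimple (by Proposition \ref{prop:rss} and the matching of invariant polynomials), which you need before applying Proposition \ref{prop:central algebra} on the primed side; (ii) $\mathbf{z}^2$ lies in $L^\times$ and not merely in $L$ (it is a unit of $B$ contained in the \'etale algebra $L$), so the cyclic algebra is indeed Azumaya; (iii) the identification $L\otimes_F K_1\iso L\cdot\Phi_1(K_1)$ does not follow from commutativity alone --- it is cleanest to define the map out of the abstract cyclic algebra directly by $\ell\otimes x\mapsto \ell\,\Phi_1(x)$ and $z\mapsto\mathbf{z}$, prove injectivity via the ideal correspondence for Azumaya algebras (the kernel meets $L$ trivially), and deduce that identification afterwards, with surjectivity then coming from the equality of $F$-dimensions.
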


\begin{proof}
The validity of the proposition does not depend on the choices of  $x_1\in K_1 $ and $x_2 \in K_2$ made in \S \ref{ss:wz}.
If $\mathrm{char}(F) \neq 2$ we choose them so that 
\[
x_1^{\sigma_1} = - x_1  \quad \mbox{ and }\quad x_2^{\sigma_2} =-x_2.
\]
If $\mathrm{char}(F)=2$, we  use the surjectivity of   $\mathrm{Tr}:K_i \to F$ to choose them so that
\[
x_1^{\sigma_1} =  x_1 +1   \quad \mbox{ and } \quad x_2^{\sigma_2} = x_2+1 .
\]
In either case, for  $i\in \{1,2\}$ we define $a_i,b_i\in F$ by 
\[
x_i^{\sigma_i} = a_i x_i + b_i \in K_i .
\]

Using only the data $\Inv_{(\Phi_1,\Phi_2)}$ and the choices of $x_1$ and $x_2$, we will  construct an abstract $F$-algebra $\mathcal{F}(\varphi_1,\varphi_2)$ and  embeddings $\varphi_i :K_i \to \mathcal{F}(\varphi_1,\varphi_2)$ in such a way that 
\[
\mathcal{F}(\varphi_1,\varphi_2) \iso F(\Phi_1,\Phi_2)
\]
and $\varphi_i$ is identified with $\Phi_i$.   Once this is done, the claim follows immediately: by symmetry (note that $(\Phi_1',\Phi_2')$ is also regular semisimple, by Proposition \ref{prop:rss} and the assumption on the matching of invariant polynomials), there is an analogous isomorphism
$
\mathcal{F}(\varphi_1,\varphi_2) \iso F(\Phi'_1,\Phi'_2).
$

Let $\mathbf{w},\mathbf{z} \in B$  be the elements  \eqref{w def} and \eqref{z def}. 
We saw in the proof of Lemma \ref{lem:good w} that the minimal polynomial $Q_\mathbf{w} \in F[T]$ of $\mathbf{w}$ is related to the invariant polynomial $\Inv_{(\Phi_1,\Phi_2)} \in K_3[T]$ by a change of variables
\[
\Inv_{(\Phi_1,\Phi_2)}(T) = Q_\mathbf{w}(c+dT),
\]
where the scalars $c\in K_3$ and $d\in K_3^\times$  depend on the choices of $x_1\in K_1$ and $x_2 \in K_2$, but not on the pair  $(\Phi_1,\Phi_2)$.   
Thus the   $F$-algebra 
\[
\mathcal{L} = F[W] / (Q_\mathbf{w}(W))
\]
of dimension $h$,  where $F[W]$ is a polynomial ring in one variable, depends only on $\Inv_{(\Phi_1,\Phi_2)}$ and the choices of $x_1$ and $x_2$.
There is a  surjective morphism $ \mathcal{L} \to L $ characterized by $W \mapsto \mathbf{w}$.

Now let 
\[
\mathcal{F}(\varphi_1,\varphi_2) = \mathcal{L}[ Z,X_1,X_2 ]
\]
  be the free $\mathcal{L}$-algebra generated by three noncommuting variables, modulo  the two-sided ideal generated by the relations 
 \begin{itemize}
   \item
$X_i^2 - \mathrm{Tr}(x_i) X_i + \mathrm{Nm}(x_i) =0$,
 \item
  $Z^2 = W^2 - \mathrm{Tr}(x_1)\mathrm{Tr}(x_2) W + \mathrm{Tr}(x_1^2)\mathrm{Nm}(x_2)+\mathrm{Tr}(x_2^2)\mathrm{Nm}(x_1)$, 
  \item
$Z X_i = (a_i X_i + b_i)  Z$,
\item
$Z  =  W -   [ a_2 X_2+ b_2] [ a_1 X_1 + b_1] - X_2X_1 $.
\end{itemize}
The first of these relations allows us to define 
\[
\varphi_i : K_i \to \mathcal{F}(\varphi_1,\varphi_2)
\]
 by $\varphi_i(x_i) = X_i$.  Moreover, there is a unique surjection 
 \begin{equation}  \label{universal quaternion}
\mathcal{F}(\varphi_1,\varphi_2)  \to F(\Phi_1,\Phi_2)  ,
\end{equation}
satisfying $W \mapsto  \mathbf{w}$,   $Z \mapsto   \mathbf{z}$, and $X_i \mapsto \Phi_i(x_i)$.
 Indeed, one only needs to check that $\mathbf{w}$, $\mathbf{z}$, and $\Phi_i(x_i)$ satisfy the same four relations as  $W$, $Z$, and $X_i$.  The first relation is clear, the second and third are found in   Proposition \ref{prop:wz properties},   and the fourth follows directly from \eqref{w def} and \eqref{z def}, which imply 
 \begin{align*}
\mathbf{w} - \mathbf{z} 
&= \Phi_2(x_2^{\sigma_2}) \Phi_1(x_1^{\sigma_1}) + \Phi_2(x_2)\Phi_1(x_1)  \\
&=   [ a_2 \Phi_2(x_2)+ b_2] [ a_1 \Phi_1( x_1) + b_1] + \Phi_2(x_2)\Phi_1(x_1)  .
\end{align*}

We next claim that   $\mathcal{F}(\varphi_1,\varphi_2)$ is generated as an $\mathcal{L}$-algebra by $Z$ and $X_1$ alone.   To see this, rewrite
\[
Z  =  W -   [ a_2 X_2+ b_2] [ a_1 X_1 + b_1] - X_2X_1 
\]
as 
\begin{equation}\label{W-Z}
W-Z = X_2 \cdot    (a_1a_2   X_1 + X_1+ b_1a_2 )  +     (a_1b_2  X_1 + b_1b_2 ),
\end{equation}
and recall our particular choices of $x_1$ and $x_2$.  If $\mathrm{char}(F) \neq 2$ then 
\[
a_1a_2  x_1 + x_1   + b_1a_2 = 2 x_1  \in K_1^\times,
\]
while if $\mathrm{char}(F) =2$ then
\[
a_1a_2  x_1 + x_1   + b_1a_2 = 1 \in K_1^\times.
\]
In either case, applying $\varphi_1$ shows that  $a_1a_2   X_1 + X_1+ b_1a_2$ is a unit in  $F[X_1]$, 
allowing us to  solve \eqref{W-Z} for $X_2$ in terms of $W$, $Z$, and $X_1$.

Finally, from the relations  satisfied by $X_1$ and $Z$   it is clear that
\[
\mathcal{L}[Z,X_1] = \mathcal{L} + \mathcal{L} X_1 +\mathcal{L} Z + \mathcal{L} X_1 Z
\]
as $\mathcal{L}$-modules, and so the $F$-dimension of $\mathcal{L}[ Z,X_1] = \mathcal{F}(\varphi_1,\varphi_2)$ is at most $4h$.  
As the  $F$-algebra $F(\Phi_1,\Phi_2)$ has dimension  $4h$ by Proposition \ref{prop:central algebra},
 the  surjection \eqref{universal quaternion} is an isomorphism.
\end{proof}

\begin{corollary}\label{cor:conjugacy}
Suppose we are given $F$-algebra embeddings
\[
\Phi_1 ,\Phi_1' : K_1 \to B,\qquad \Phi_2 ,\Phi_2': K_2 \to B
\]
such that  $(\Phi_1,\Phi_2)$ and $(\Phi_1',\Phi_2')$ are both regular semisimple.
Then $(\Phi_1,\Phi_2)$ and $(\Phi_1',\Phi_2')$  have the same invariant polynomial if and only if they lie in the same $B^\times$-conjugacy class. 
 In other words, if and only if there is a $b \in B^\times$ such that 
\begin{equation*}
b\Phi_1b^{-1} = \Phi_1'  \quad  \mbox{ and }\quad  b\Phi_2 b^{-1} = \Phi_2'.
\end{equation*}
\end{corollary}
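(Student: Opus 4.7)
My plan is to prove the nontrivial implication by applying the generalized Noether--Skolem theorem (Theorem \ref{thm:ns}) to the semisimple $F$-algebra $A = F(\Phi_1,\Phi_2)$. The easy direction, that $B^\times$-conjugate pairs share an invariant polynomial, is recorded in Remark \ref{rem:conj invariant}, so I focus on the converse: assuming the two pairs have the same invariant polynomial, I must exhibit a $b \in B^\times$ conjugating one pair to the other.

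First I will invoke Proposition \ref{prop:quaternion recovery} to produce an isomorphism of $F$-algebras $\iota : F(\Phi_1,\Phi_2) \to F(\Phi_1',\Phi_2')$ that identifies $\Phi_i$ with $\Phi_i'$ for $i=1,2$. Setting $A = F(\Phi_1,\Phi_2)$, this yields two $F$-algebra homomorphisms $A \to B$: the inclusion $\varphi$, and the composition $\varphi'$ of $\iota$ with the inclusion $F(\Phi_1',\Phi_2') \hookrightarrow B$. By construction, $\varphi$ restricts to $\Phi_i$ and $\varphi'$ restricts to $\Phi_i'$ on the subalgebra $\Phi_i(K_i) \subset A$.

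Next I will verify the module-theoretic hypothesis of Theorem \ref{thm:ns}, namely that $B(\varphi) \iso B(\varphi')$ as left $A$-modules. Proposition \ref{prop:central algebra}(6) guarantees that $B$ is free over $F(\Phi_1,\Phi_2)$; the same proposition applied to the (also regular semisimple) pair $(\Phi_1',\Phi_2')$ gives that $B$ is free over $F(\Phi_1',\Phi_2')$. Dimension counting $\dim_F B = 4h^2 = h \cdot \dim_F A$ forces the rank to equal $h$ in both cases, so $B(\varphi)$ and $B(\varphi')$ are both free $A$-modules of rank $h$, hence isomorphic. Theorem \ref{thm:ns} then produces $b \in B^\times$ with $\varphi' = b\varphi b^{-1}$, and restricting to the two quadratic subalgebras gives $\Phi_i' = b\Phi_i b^{-1}$ for $i=1,2$.

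I do not anticipate a genuine obstacle: the substantive content is already packaged in Propositions \ref{prop:quaternion recovery} and \ref{prop:central algebra}, which respectively reconstruct the intermediate $F$-algebra from the invariant polynomial and supply the freeness needed to run Noether--Skolem. The only small point requiring care is that Theorem \ref{thm:ns} is stated for semisimple (not necessarily simple) source algebras, and this generalization is exactly what is needed here, since $A$ decomposes as a product of quaternion algebras over the factors of the \'etale algebra $L$.
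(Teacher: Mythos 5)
Your proposal is correct and follows the paper's own argument: apply Proposition \ref{prop:quaternion recovery} to identify the two algebras $F(\Phi_1,\Phi_2)\iso F(\Phi_1',\Phi_2')$ and then conjugate the two resulting maps into $B$ via Theorem \ref{thm:ns}, whose freeness hypothesis is supplied by Proposition \ref{prop:central algebra}(6). The only difference is that you spell out the rank count showing $B(\varphi)\iso B(\varphi')$, which the paper leaves implicit in its parenthetical citation.
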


\begin{proof}
One direction is Remark \ref{rem:conj invariant}, so assume that $(\Phi_1,\Phi_2)$ and $(\Phi_1',\Phi_2')$ have the same invariant polynomial.  By Proposition \ref{prop:quaternion recovery}, there is an isomorphism
\[
F(\Phi_1,\Phi_2) \iso F(\Phi_1',\Phi_2')
\]
identifying $\Phi_1=\Phi_1'$ and $\Phi_2=\Phi_2'$. 
By  Theorem \ref{thm:ns} (whose hypotheses are satisfied  by Proposition \ref{prop:central algebra}),  
the  inclusions $F(\Phi_1,\Phi_2) \to B$ and $F(\Phi'_1,\Phi'_2) \to B$ are $B^\times$-conjugate, and  the proposition follows. 
 \end{proof}

\begin{remark}
 If $h=1$, so that $B$ is a quaternion algebra over $F$,  the invariant polynomial has the form  
\[
\Inv_{(\Phi_1,\Phi_2)} (T) = T -\xi
\]
 for some $\xi \in K_3$ satisfying (by Proposition \ref{prop:functional})  $\mathrm{Tr}_{K_3/F}(\xi)=1$.
 The proof that  $\xi$ determines the $B^\times$-conjugacy class of $(\Phi_1,\Phi_2)$ already appears in  \cite{HS}, although the construction of $\xi$ described there is quite different.
\end{remark}


\section{The biquadratic fundamental lemma}


From this point on we assume that $F$ is a nonarchimedean local field, and fix 
 quadratic \'etale $F$-algebras  $K_1$ and $K_2$.
In this section we define two kinds of orbital integrals, and formulate a conjectural  fundamental lemma
relating them.


\subsection{Orbital integrals for $(\Phi_1,\Phi_2)$}


Given $F$-algebra embeddings
\begin{equation*}
\Phi_1 : K_1 \to M_{2h}(F),\qquad  \Phi_2: K_2 \to M_{2h}(F) ,
\end{equation*}
let $H_i \subset \GL_{2h}(F)$ be the centralizer of 
$\Phi_i(K_i)^\times$.  If we use $\Phi_i$  to view $F^{2h}$ as a $K_i$-module, the natural action of $H_i$ on $F^{2h}$ determines  isomorphisms
\begin{equation*}
H_1 \iso \GL_h(K_1)   ,\qquad H_2 \iso \GL_h(K_2),
\end{equation*}
well-defined up to conjugacy.

Assume that $(\Phi_1,\Phi_2)$ is regular semisimple, in the sense of Definition \ref{def:rss}.     
This implies, by Proposition \ref{prop:central algebra}, that  $F(\Phi_1,\Phi_2) \subset M_{2h}(F)$ has centralizer   an \'etale $F$-algebra $L \subset M_{2h}(F)$ of dimension $h$.
 In particular
\begin{equation}\label{toric intersection}
L^\times = H_1 \cap H_2
\end{equation}
is a torus.

\begin{definition}\label{def:orbital 12}
Given a compactly supported  $f$ as in \eqref{test function},  define the \emph{orbital integral} 
\begin{equation*}
O_{(\Phi_1,\Phi_2)}(f) 
= \int_{  H_1\cap H_2 \backslash H_1\times H_2  } f( g_1^{-1} h_1^{-1} h_2 g_2 ) \, dh_1\, dh_2, 
\end{equation*}
where $g_1,g_2\in \GL_{2h}(F)$ are chosen to satisfy the integrality condition
\begin{equation*}
 \Phi_i( \co_{K_i})  \subset   g_i  M_{2h}(\co_F)  g_i^{-1},
\end{equation*}
and $H_1\cap H_2 \subset  H_1\times H_2$ is embedded diagonally.
\end{definition}

\begin{remark}\label{rem:haar}
The $\Phi_i(\co_{K_i})$-stable lattices $\Lambda \subset F^{2h}$ form a single $H_i$-orbit, and 
the Haar measure on $H_i$ is normalized by
\begin{equation*}
 \mathrm{vol}( \mathrm{Stab}_{H_i} (\Lambda) ) =1 .
\end{equation*}
The Haar measure on $H_1\cap H_2$ is normalized, using  \eqref{toric intersection}, by
$\mathrm{vol}(\co_L^\times) =1$.
\end{remark}

\begin{remark}\label{rem:classic orbital}
The orbital integral is independent of the choice of $g_1$ and $g_2$.
Moreover,  if we set  $g =g_1^{-1} g_2$ and $H'_i = g_i^{-1} H_i g_i$,  the change of variables $h_i \mapsto g_i h_i g_i^{-1} $ puts the orbital integral into the more familiar form
\begin{equation*}
O_{(\Phi_1,\Phi_2)}(f) = \int_{   \{ ( h_1 , h_2) \, :\,  g h_2 = h_1 g \}    \backslash  H'_1\times H'_2 } 
f(  h_1^{-1} g h_2  ) \, dh_1\, dh_2 .
\end{equation*}
\end{remark}


\subsection{Orbital integrals  for $(\Phi_0,\Phi_3)$}


As in \S \ref{ss:intro matching}, we may set 
\begin{equation*} 
K_0=F\times F  
\end{equation*} 
and repeat the construction of \eqref{biquadratic} with  the pair $(K_1,K_2)$ replaced by $(K_0 , K_3)$. 
This gives another diagram of $F$-algebras \eqref{biquadratic2}.  Repeating the constructions of  \S \ref{ss:polynomial}, we associate to  any  pair of $F$-algebra embeddings
\begin{equation*}
\Phi_0 : K_0 \to M_{2h}(F),\qquad  \Phi_3: K_3 \to M_{2h}(F) 
\end{equation*}
  a degree $h$  monic polynomial 
$\Inv_{(\Phi_0,\Phi_3)} \in K_3[T]$ satisfying the functional equation 
\begin{equation*}
(-1)^h  \cdot \Inv_{(\Phi_0,\Phi_3)}(1-T) =  \Inv_{(\Phi_0,\Phi_3)}^{\sigma_3}  (T).
\end{equation*}

Denote by $H_i \subset \GL_{2h}(F)$  the centralizer of  $\Phi_i(K_i)^\times$.  
If we use $\Phi_i$  to view $F^{2h}$ as a $K_i$-module, then choices of $K_i$-bases  determine isomorphisms
\begin{equation}\label{03 centralizers}
H_0 \iso \GL_h(K_0)   ,\qquad H_3 \iso \GL_h(K_3).
\end{equation}
Composing the  absolute value $|\cdot| :F^\times \to \R^\times$ with the character
\begin{equation*}
H_0 \iso \GL_h(K_0) = \GL_h(F) \times \GL_h(F) \map{ (a,b) \mapsto  \frac{\det(a)}{\det(b)} } F^\times
\end{equation*}
determines a character,  again denoted 
\begin{equation}\label{H0 character}
| \cdot | : H_0 \to \R^\times.
\end{equation}
We denote by $\eta : K_3^\times \to \{ \pm 1\}$  the character associated to the \'etale quadratic extension $K/K_3$ by class field theory (if $K_3\iso F\times F$,  then class field theory associates to $K/K_3$ a quadratic character on each copy of $F^\times$, and $\eta$ is defined as their product), and note that $\eta|_{F^\times}$ is the trivial character.
  Composing $\eta$ with the determinant  $H_3 \iso \GL_h(K_3) \to K_3^\times$ yields a character, again denoted
   \begin{equation}\label{H3 character}
 \eta : H_3 \to \{\pm 1\} .
 \end{equation}

\begin{remark}\label{rem:3 character}
The character \eqref{H3 character} admits two natural extensions to $\GL_{2h}(F)$, given by the compositions
\[
 \GL_{2h}(F)  \map{\det}  F^\times \map{ \eta_{K_i/F} }    \{ \pm 1\} 
\]
for $i\in \{1,2\}$,  where $\eta_{K_i/F}$ is the quadratic character associated to $K_i/F$.
\end{remark}

\begin{remark}\label{rem:0 character}
In contrast, the character \eqref{H0 character} does not extend to $\GL_{2h}(F)$.  Indeed, if $z \in \GL_{2h}(F)$ is any element such that $z \cdot \Phi_0(x) = \Phi_0(x^{\sigma_0}) \cdot z$ for all $x\in K_0$, then conjugation by $z$ preserves $H_0$, but
\[
| z h_0 z^{-1} | = | h_0|^{-1}.
\]
\end{remark}

For the remainder of this subsection we assume that  $(\Phi_0,\Phi_3)$ is regular semisimple.
By  Proposition \ref{prop:central algebra}, the subalgebra $F(\Phi_0,\Phi_3) \subset M_{2h}(F)$ has centralizer   an \'etale $F$-algebra $L$ of dimension $h$, and hence
\begin{equation}\label{toric intersection 2}
L^\times = H_0 \cap H_3
\end{equation}
is a torus.

\begin{lemma}\label{lem:nocharacters}
The characters  \eqref{H0 character} and \eqref{H3 character} are trivial on  \eqref{toric intersection 2}.
\end{lemma}

\begin{proof}
We used choices of isomorphisms \eqref{03 centralizers} to define the characters in question, but it is easy to see that the resulting characters do not depend on these choices.  We are therefore free to make them in a particular way.

Using the embedding $\Phi_i: K_i \to M_{2h}(F)$, we may view the standard representation $F^{2h}$ as a free $K_i$-module of rank $h$.  We claim that there is an $F$-subspace $V \subset F^{2h}$ such that  the natural maps 
\[
K_0 \otimes_F V \to F^{2h} \quad  \mbox{and} \quad K_3 \otimes_F V \to F^{2h}
\]
are isomorphisms.   Indeed, using  Proposition \ref{prop:central algebra} it is not hard to see   that $F^{2h}$  is free  of rank one over both $K_0 \otimes_F L$ and $K_3 \otimes_F L$.   There is therefore a  Zariski  dense set of elements $e \in F^{2h}$ such that 
\[
( K_0 \otimes_F L) e = F^{2h} = ( K_3 \otimes_F L) e,
\]
and for any such $e$ the subspace $V= Le$ has the required properties.
 
 Let $f_1,\ldots, f_h$ be an $F$-basis for the subspace $V \subset F^{2h}$.
These same vectors form a basis for both the  $K_0$-module and $K_3$-module structures on $F^{2h}$. 
 If we use this  basis to define the isomorphisms \eqref{03 centralizers}, then both isomorphisms identify $L^\times = H_0 \cap H_3$ with a subgroup of    $\GL_h(F)$.
   It is  clear from their constructions that \eqref{H0 character} and \eqref{H3 character}  have trivial restriction to this subgroup.
\end{proof}

The preceding lemma allows us to make the following definition.

\begin{definition}\label{def:orbital 03}
For every compactly supported function \eqref{test function},
 define the \emph{orbital integral} 
\begin{equation*}
O_{(\Phi_0,\Phi_3)} (f; s, \eta ) = \int_{ H_0\cap H_3 \backslash H_0\times H_3 } f( g_0^{-1} h_0^{-1} h_3 g_3 )
\cdot | h_0 |^s \cdot \eta( h_3) \, dh_0\, dh_3, 
\end{equation*}
where $g_0,g_3\in \GL_{2h}(F)$ are chosen to satisfy the integrality conditions
\begin{equation}\label{integrality shift 03}
 \Phi_i( \co_{K_i})  \subset   g_i  M_{2h}(\co_F)  g_i^{-1},
\end{equation}
and   $H_0\cap H_3 \subset H_0\times H_3$ is embedded diagonally.
\end{definition}

\begin{remark}
The Haar measures on $H_0$, $H_3$,  and $H_0\cap H_3$ are normalized as in Remark \ref{rem:haar}.
\end{remark}

\begin{remark}\label{rem:orbital invariant}
The orbital integral depends on the choices of $g_0$ and $g_3$ satisfying \eqref{integrality shift 03}, but not in a significant way. Different choices  have the form $g_i' = z_i g_i k_i$ with
$z_i \in H_i$ and $k_i \in \GL_{2h}(\co_F)$,  and such a change multiplies the orbital integral by $ \eta(z_3) |z_0|^{-s}$.   
In particular, the value of the orbital integral at $s=0$ is well-defined up to $\pm 1$.
\end{remark}

\begin{proposition}\label{prop:no ramified orbitals}
If  $K/K_3$  is ramified,   then  $O_{(\Phi_0,\Phi_3)} (f; s, \eta ) =0$ for every $f$ as in \eqref{test function}.
\end{proposition}

\begin{proof}
For any $u \in H_3\cap  g_3 \GL_{2h}( \co_F)   g_3^{-1}$, making the change of variables $h_3\mapsto h_3 u$ in 
Definition \ref{def:orbital 03} shows that 
\begin{equation*}
O_{(\Phi_0,\Phi_3)} (f; s, \eta ) = \eta(u) \cdot O_{(\Phi_0,\Phi_3)} (f; s, \eta ).
\end{equation*}
Recalling that $g_3$ was chosen so that 
\begin{equation*}
 \Phi_3( \co_{K_3})  \subset   g_3 M_{2h}(\co_F)  g_3^{-1},
\end{equation*}
we may use $\Phi_3$ to view $g_3 \co_F^{2h}$ as a free $\co_{K_3}$-module of rank $h$.
Our assumption on $\eta$ guarantees the surjectivity of 
\begin{equation*}
   H_3\cap  g_3 \GL_{2h}( \co_F)   g_3^{-1}   =   \Aut_{\co_{K_3}} ( g_3 \co_F^{2h} ) \map{\det} \co_{K_3}^\times \map{\eta} \{ \pm 1\},
\end{equation*}
and so we may choose $u$ as above with $\eta(u)=-1$.
\end{proof}

Still assuming that $(\Phi_0,\Phi_3)$ is regular semisimple, we show that the orbital integral of Definition \ref{def:orbital 03} satisfies a functional equation in $s\mapsto -s$.  
This will be needed in the proof of Proposition \ref{prop:orbital vanish} below.

Let $g_0,g_3 \in \GL_{2h}(F)$ be as in \eqref{integrality shift 03}.  We can use both the embedding
$\Phi_i : K_i \to M_{2h}(F)$  and its Galois conjugate $\Phi_i \circ  \sigma_i$ to make the $\co_F$-lattice $g_i \co_F^{2h}$
into an $\co_{K_i}$-module.  These two $\co_{K_i}$-modules are isomorphic, and it follows that there is 
a  $u_i\in g_i \GL_{2h}(\mathcal O_F)g_i^{-1}$ such that 
\[
u_i\Phi_i(x) u_i^{-1} =\Phi_i(x^{\sigma_i})
\]
for all $x\in K_i$.  

 As in \eqref{z def}, fix an $F$-algebra generators $x_0 \in K_0$ and $x_3\in K_3$,  and define 
\begin{equation}\label{z03}
\mathbf{z}  =  \Phi_0(x_0)\Phi_3(x_3)-\Phi_3(x_3)\Phi_0(x_0)  \in M_{2h}(F).
\end{equation}
Recall from Proposition \ref{prop:st properties} that    $\mathbf{z} \Phi_i(x) = \Phi_i(x^{\sigma_i}) \mathbf{z}$ for all $x\in K_i$,  so that $\mathbf{z} u_i$ commutes with the image of $\Phi_i$.
From   Proposition \ref{prop:central algebra} we know that $\det(\mathbf{z}) \neq 0$, and so  $\mathbf{z} u_i\in H_i$.

\begin{proposition}\label{prop:functional equation}
For any  Hecke function  $f$ as in \eqref{test function}, we have the functional equation
\[
O_{(\Phi_0,\Phi_3)}(f;s,\eta)=| \mathbf{z} u_0 |^s  \cdot \eta( \mathbf{z} u_3 ) \cdot O_{(\Phi_0,\Phi_3)}( f ;-s,\eta).
\]
\end{proposition}

\begin{proof}
If we define  another  Hecke function $f^*$  by 
\[
f^*( g )=f(g_0^{-1}u_0^{-1} g_0  \cdot g \cdot  g_3^{-1}   u_3g_3),
\]
then
\begin{align*}
& O_{(\Phi_0,\Phi_3)}(f;s,\eta)  \\
& = \int_{  H_0\cap H_3 \backslash H_0\times H_3  } f( g_0^{-1} h_0^{-1} h_3 g_3 ) \cdot|h_0|^s\cdot\eta(h_3)\, dh_0\, dh_3    \\
&= 
\int_{  H_0\cap H_3 \backslash H_0\times H_3  } f^*( g_0^{-1} u_0 h_0^{-1} h_3 u_3^{-1} g_3 ) \cdot  
|h_0|^s  \cdot  \eta(h_3)  \, dh_0\, dh_3 \\
&=
| \mathbf{z} u_0 |^s\eta( \mathbf{z} u_3)
\int_{  H_0\cap H_3 \backslash H_0\times H_3  } 
f^*( g_0^{-1} \mathbf z^{-1}h_0^{-1} h_3\mathbf z g_3 ) \cdot  |h_0|^s\cdot\eta(h_3)\, dh_0\, dh_3,
\end{align*}
where the final equality is obtained by the substitution $h_i\mapsto h_i\mathbf zu_i$.
Making the further  substitution  $h_i \mapsto \mathbf{z} h_i\mathbf{z}^{-1}$, and using Remarks \ref{rem:3 character} and \ref{rem:0 character}, yields
\[
O_{(\Phi_0,\Phi_3)}(f;s,\eta) 
= 
 | \mathbf{z} u_0 |^s  \cdot \eta( \mathbf{z} u_3 )   \cdot O_{(\Phi_0,\Phi_3)}( f^*;-s,\eta).
\]
Now note that $g_0^{-1}u_0^{-1} g_0$ and  $g_3^{-1}u_3^{-1} g_3$ lie in  $\GL_{2h}(\co_F)$, and so  $f^*=f$.
\end{proof}


\subsection{Matching pairs}


Suppose  we are given $F$-algebra embeddings
\begin{align*}
\Phi_0 &: K_0 \to M_{2h}(F), &  \Phi_1&: K_1 \to B \\
\Phi_3 &: K_3 \to M_{2h}(F), &   \Phi_2&: K_2 \to B ,
\end{align*}
 where $B$ is a central simple $F$-algebra of dimension $4h^2$. 
 Assume that both $(\Phi_0,\Phi_3)$ and $(\Phi_1,\Phi_2)$ are  regular semisimple.

 \begin{definition}\label{def:matching}
The pairs $(\Phi_0,\Phi_3)$ and $(\Phi_1,\Phi_2)$   \emph{match} if
\begin{equation*}
\Inv_{(\Phi_0,\Phi_3)} = \Inv_{(\Phi_1,\Phi_2)}.
\end{equation*}
\end{definition}

Denote by  $\mathbf{w}_{12}, \mathbf{z}_{12} \in B$  the elements \eqref{w def} and \eqref{z def} corresponding to the pair $(\Phi_1,\Phi_2)$, to distinguish them from the similarly defined  elements $\mathbf{w}_{03},\mathbf{z}_{03} \in M_{2h}(F)$ corresponding to $(\Phi_0,\Phi_3)$. 
These depend on choices of $F$-algebra generators $x_i \in K_i$ for $i\in \{ 0,1,2,3\}$, which we now choose in a compatible way.
If $\mathrm{char}(F) \neq 2$, first choose 
$x_1$ and $x_2$ in such a way that 
$
x_i^{\sigma_i} = -x_i
$
 and then set
\[
x_0 = (1,-1) \in K_0 ,\qquad x_3 = x_1\otimes x_2 \in K_3 \subset K_1\otimes_F K_2.
\]
If $\mathrm{char}(F) = 2$, first choose  $x_1$ and $x_2$ so that
$
x_i^{\sigma_i} = x_i +1 
$
and then set
\[
x_0 = (0,1) \in K_0 ,\qquad x_3 = x_1\otimes  1 + 1 \otimes x_2 \in K_3 \subset K_1\otimes_F K_2 .
\]
We now have  $F$-subalgebras 
\[
F[\mathbf{w}_{12},\mathbf{z}_{12}] \subset B , \qquad F[\mathbf{w}_{03},\mathbf{z}_{03}] \subset M_{2h}(F).
\]
Our  particular choices of generators are  justified by the following result.

\begin{lemma}\label{lem:match iso}
If $(\Phi_0,\Phi_3)$ and $(\Phi_1,\Phi_2)$ match,  there is an isomorphism of $F$-algebras
\[
F[\mathbf{w}_{12},\mathbf{z}_{12}]  \iso F[\mathbf{w}_{03},\mathbf{z}_{03}]
\]
 sending $\mathbf{w}_{12} \mapsto \mathbf{w}_{03}$ and $\mathbf{z}_{12} \mapsto \mathbf{z}_{03}$.  
\end{lemma}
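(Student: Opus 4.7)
The plan is to present each of the commutative $F$-algebras $F[\mathbf{w}_{12},\mathbf{z}_{12}]$ and $F[\mathbf{w}_{03},\mathbf{z}_{03}]$ explicitly as a quotient $F[W,Z]/(Q(W),\, Z^2-P(W))$ with $Q,P\in F[W]$ depending only on the invariant polynomial and on the chosen generators $x_0,x_1,x_2,x_3$, and then to verify that the particular choices stipulated in the lemma force the two presentations to coincide.

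First I would establish the common presentation. By Proposition \ref{prop:wz properties} the elements $\mathbf{w}$ and $\mathbf{z}$ commute and $\mathbf{z}^2=P(\mathbf{w})$ for an explicit $P\in F[W]$; letting $Q_\mathbf{w}\in F[W]$ denote the minimal polynomial of $\mathbf{w}$, there is a natural surjection
\[
F[W,Z]/\bigl(Q_\mathbf{w}(W),\, Z^2-P(W)\bigr)\twoheadrightarrow F[\mathbf{w},\mathbf{z}],
\]
whose source has $F$-dimension $2h$. To prove this map is an isomorphism I would use the $\Z/2\Z$-grading $B = B_+\oplus B_-$ induced by $\Phi_1$: by Propositions \ref{prop:wz properties} and \ref{prop:central algebra} one has $L=F[\mathbf{w}]\subset B_+$ and $\mathbf{z}\in B_-\cap B^\times$, so $L$ and $L\mathbf{z}$ are disjoint subspaces of dimensions $h$ and $h$, forcing $\dim_F F[\mathbf{w},\mathbf{z}]\ge 2h$.

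The matching of presentations splits into two computations. For $P$, the formula of Proposition \ref{prop:wz properties} depends only on traces and norms of the generators; a direct computation with the stipulated choices, using the identity $x_3 = x_1\otimes x_2$ (resp.~$x_3 = x_1\otimes 1 + 1\otimes x_2$ in characteristic $2$) to express $x_3^2$ in terms of $x_1^2,x_2^2$ (resp.~$\mathrm{Nm}(x_1),\mathrm{Nm}(x_2)$), verifies $P_{12}=P_{03}$. For $Q_\mathbf{w}$, the proof of Lemma \ref{lem:good w} gives $\Inv(T)=Q_\mathbf{w}(c+dT)$ in $K_3[T]$ where $c = x_1 x_2^{\sigma_2}+x_2 x_1^{\sigma_1}\in K_3$ and $d = (x_1-x_1^{\sigma_1})(x_2-x_2^{\sigma_2})\in K_3^\times$ are read off from Proposition \ref{prop:alt st}. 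A short calculation under the canonical isomorphism $K_0\otimes_F K_3\iso K_3\times K_3$ of \eqref{biquadratic2} yields $c_{12}=c_{03}$ and $d_{12}=d_{03}$; combined with the hypothesis of matching invariant polynomials, this forces $Q_{\mathbf{w}_{12}}=Q_{\mathbf{w}_{03}}$ as elements of $F[W]$, and the required isomorphism follows.

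The main obstacle is purely bookkeeping: carefully tracking the canonical identification of the common third quadratic subalgebra $K_3$ across the two biquadratic diagrams \eqref{biquadratic} and \eqref{biquadratic2}, and executing the characteristic-$2$ case in parallel to the characteristic-$\ne 2$ case, where the stipulated generators satisfy $x_i^{\sigma_i}=x_i+1$ rather than $-x_i$ and the polynomial $P$ takes a different explicit form. Once these identifications are in place, the verifications $c_{12}=c_{03}$, $d_{12}=d_{03}$, and $P_{12}=P_{03}$ reduce to routine element-chasing.
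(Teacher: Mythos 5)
Your proposal is correct and follows essentially the same route as the paper: the paper likewise recovers the minimal polynomial of $\mathbf{w}$ from the invariant polynomial via the scalars $c,d$ of Proposition \ref{prop:alt st}, uses the stipulated generators to force $c_{12}=c_{03}$ and $d_{12}=d_{03}$, and matches the quadratic relation for $\mathbf{z}$ coming from Proposition \ref{prop:wz properties}. Your dimension count via the $\Z/2\Z$-grading (showing $F[\mathbf{w},\mathbf{z}]=L\oplus L\mathbf{z}$ has dimension $2h$, so that $Z^2-P(W)$ really is the minimal polynomial of $\mathbf{z}$ over $L$) simply makes explicit a verification the paper compresses into ``one sees.''
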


\begin{proof}
The proof of Lemma \ref{lem:good w} explains how to reconstruct the minimal polynomials of $\mathbf{w}_{12}\in B$ and $\mathbf{w}_{03} \in M_{2h}(F)$  from 
the  invariant polynomials $\Inv_{(\Phi_1,\Phi_2)}$ and $\Inv_{(\Phi_0,\Phi_3)}$, using the scalars $c_{12},d_{12},c_{03},d_{03} \in K_3$ characterized by the equalities
\[
\mathbf{w}_{12} = c_{12} + d_{12} \mathbf{s}_{12}, \qquad \mathbf{w}_{03} = c_{03}+d_{03} \mathbf{s}_{03}
\]
of Proposition \ref{prop:alt st}.
We have chosen $(x_1,x_2)$ and $(x_0,x_3)$ in such a way that $c_{12}=c_{03}$ and $d_{12}=d_{03}$, and so the matching of invariant polynomials implies the matching of minimal polynomials.   
Therefore there exists an isomorphism of (\'etale, by Proposition \ref{prop:central algebra})  $F$-algebras
\begin{equation}\label{w matching}
F[\mathbf{w}_{12}]  \iso F[\mathbf{w}_{03}]
\end{equation}
 sending $\mathbf{w} _{12}\mapsto \mathbf{w}_{03}$.   
 Using Proposition \ref{prop:wz properties}  one sees that $\mathbf{z}_{12}$ and $\mathbf{z}_{03}$ have the same  (quadratic) minimal polynomial over \eqref{w matching}, and the  lemma follows. 
 \end{proof}

\begin{proposition}\label{prop:orbital vanish}
Assume that $B$ is a division algebra.
If $(\Phi_0,\Phi_3)$ and $(\Phi_1,\Phi_2)$ match,  then  
\begin{equation*}
 O_{(\Phi_0,\Phi_3)} (f; 0,\eta) =   0 
\end{equation*}
for all $f$ as in \eqref{test function}.  
\end{proposition}

\begin{proof}
Identifying $K_1$ and $K_2$ with their images under $\Phi_i : K_i \to B$, and 
using Propositions \ref{prop:st properties} and \ref{prop:central algebra}, we find inside of $B$ the diagram of field extensions
\[
\xymatrix{
 {  K_1[\mathbf{w}_{12}]  }   \ar@{-}[dr]_2 &  {F[\mathbf{w}_{12},\mathbf{z}_{12}]}   \ar@{-}[d]^2& {K_2[\mathbf{w}_{12}] }  \ar@{-}[dl]^2 \\
  & { F[\mathbf{w}_{12}]} \\ 
  { K_1  }  \ar@{-}[uu]^h    \ar@{-}[dr]_2&  & {  K_2 }  \ar@{-}[uu]_h   \ar@{-}[dl]^2 \\
  & {  F } \ar@{-}[uu]_h
}
\]
of the indicated degrees.  
Let $\sigma_i$ be the nontrivial automorphism of $K_i[\mathbf{w}_{12}]$ fixing  $F[\mathbf{w}_{12}]$, and 
recall from Proposition \ref{prop:wz properties} that 
\[
\mathbf{z}_{12}^2 \in F[\mathbf{w}_{12}]
\]
 and $\mathbf{z}_{12} a = a^{\sigma_i} \mathbf{z}_{12}$ for all $a\in K_i[\mathbf{w}_{12}]$.

If $\mathbf{z}_{12}^2$ were a norm from $K_i[\mathbf{w}_{12}]$, say $\mathbf{z}_{12}^2 =a a^{\sigma_i}$,  then
\[(\mathbf{z}_{12}-a)(\mathbf{z}_{12}+a^{\sigma_i} ) =0\] would contradict $B$ being a division algebra.  This gives the final equality in
\begin{align*}
\eta_{ K_i/F}(  -  \det ( \mathbf{z}_{03} ) )  
& =
\eta_{ K_i/F}(  - \mathrm{Nm}_{ F[\mathbf{w}_{03},\mathbf{z}_{03} ]  /F} ( \mathbf{z}_{03}) )\\
& = \eta_{ K_i/F}(  - \mathrm{Nm}_{ F[ \mathbf{w}_{12}, \mathbf{z}_{12}] /F} ( \mathbf{z}_{12} ) )   \\
&=  \eta_{ K_i/F}( \mathrm{Nm}_{ F[\mathbf{w}_{12}] /F} ( \mathbf{z}_{12}^2 ) )    \\
& =   \eta_{K_i[\mathbf{w}_{12}] / F[\mathbf{w}_{12}]}(  \mathbf{z}_{12}^2)    \\
& =-1 ,  \nonumber
\end{align*}
where $\eta_{ \bullet }$ denotes the character associated to a quadratic extension by local class field theory, and we have used Lemma \ref{lem:match iso}.

To complete the proof, we may assume (after Proposition \ref{prop:no ramified orbitals}) that  $K/K_3$ is unramified.
This implies that at least one of   $K_1$ or $K_2$ is unramified over $F$.  Without loss of generality, let us suppose  $K_1$ is unramified.

If $u_3\in \GL_{2h}(F)$ is as in Proposition \ref{prop:functional equation}, so that $\det(u_3) \in \co_F^\times$, then Remark \ref{rem:3 character} provides the first equality in 
\[
\eta( \mathbf{z}_{03} u_3) =  \eta_{K_1/F}( \det ( \mathbf{z}_{03} u_3) ) =\eta_{K_1/F}( \det ( \mathbf{z}_{03} ) )   =  -1 .
\]
The claim now follows from the functional equation of Proposition \ref{prop:functional equation}.
\end{proof}


\subsection{The central value conjecture}


Assume that $K/K_3$ is unramified, and suppose we are given $F$-algebra embeddings
\begin{align*}
\Phi_0 &: K_0 \to M_{2h}(F), &  \Phi_1&: K_1 \to M_{2h}(F)  ,  \\
\Phi_3 &: K_3 \to M_{2h}(F), &   \Phi_2&: K_2 \to M_{2h}(F) .
\end{align*}

\begin{conjecture}[Biquadratic fundamental lemma]\label{conj:BFL}
If the pairs $(\Phi_0,\Phi_3)$ and $(\Phi_1,\Phi_2)$ are regular semisimple  and  matching (Definitions \ref{def:rss} and  \ref{def:matching}), then for any  $f$ as in \eqref{test function} we have
\begin{equation*}
 \pm O_{(\Phi_0,\Phi_3)} (f; 0,\eta) =   O_{(\Phi_1,\Phi_2)} (f) .
\end{equation*}
\end{conjecture}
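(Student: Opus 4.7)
The plan is to attack Conjecture \ref{conj:BFL} along the same lines that have been successful for related fundamental lemmas (Jacquet-Rallis, Guo-Jacquet), using the invariant polynomial of \S\ref{ss:polynomial} as the common parameter identifying orbits on the two sides. The critical structural input is Lemma \ref{lem:match iso}: matching pairs produce canonically isomorphic quaternionic centralizers $F[\mathbf{w}_{12}, \mathbf{z}_{12}] \iso F[\mathbf{w}_{03}, \mathbf{z}_{03}]$, and this isomorphism should be promoted to a geometric comparison of orbits.

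First, I would reformulate both orbital integrals as weighted sums over lattices. After the change of variable $g = g_1^{-1}g_2$ (respectively $g = g_0^{-1}g_3$), and using the Cartan decomposition for $\GL_{2h}(F)$, the integral $O_{(\Phi_1, \Phi_2)}(f)$ becomes a weighted count of $\co_F$-lattices $\Lambda \subset F^{2h}$ stable under one of $\Phi_1(\co_{K_1})$ or $\Phi_2(\co_{K_2})$, weighted by $f$ evaluated on the relative position to the reference lattice. The orbital integral $O_{(\Phi_0, \Phi_3)}(f; 0, \eta)$ becomes an analogous \emph{signed} count in which the weight is twisted by $\eta(h_3)$. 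Corollary \ref{cor:conjugacy} guarantees that once the invariant polynomial is fixed, the \'etale $F$-algebra $F(\Phi_1, \Phi_2)$ and its counterpart $F(\Phi_0, \Phi_3)$ have the same internal structure, so one expects a bijection between the two orbit sets that is compatible with both the weight from $f$ and the inherent local volumes.

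Second, I would reduce to a manageable class of test functions by exploiting the fact that both sides are spherical in $f$, and so factor through the Hecke algebra $\mathcal{H}(\GL_{2h}(F)/\!\!/\GL_{2h}(\co_F))$. The idea is to show that both functionals satisfy the same Hecke-algebra-equivariance, so that verifying the identity on the unit element $f = \mathbf{1}$ suffices. For the unit element, one tries to produce an explicit bijection between: (i) joint $(\Phi_1(\co_{K_1}), \Phi_2(\co_{K_2}))$-stable lattices modulo $H_1 \cap H_2$, and (ii) pairs consisting of a joint $(\Phi_0(\co_{K_0}), \Phi_3(\co_{K_3}))$-stable lattice together with a sign prescribed by $\eta$ on the $\co_{K_3}$-module structure. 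The functional equation of Proposition \ref{prop:functional equation} provides a consistency check: it should correspond to the symmetry swapping $\Phi_1$ and $\Phi_2$ on the other side.

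The main obstacle will be constructing the sign-bijection faithfully, that is, matching the character $\eta(h_3) = \eta_{K/K_3}(\det_{K_3}(h_3))$ with the natural parity of a combinatorial invariant (such as a length function attached to the relative position of a lattice) on the $(\Phi_1, \Phi_2)$-side. In the degenerate case $K_1 = K_2$, where Conjecture \ref{conj:BFL} specializes to the Guo-Jacquet fundamental lemma, this parity is understood via class-field-theoretic Hasse invariants. In the genuinely biquadratic case, the two independent characters $\eta_{K_1/F}$ and $\eta_{K_2/F}$ interact through $K_3$, and the fact that $\eta|_{F^\times}$ is trivial must be used delicately. A secondary, technical issue is that my hypotheses include only $K/K_3$ unramified, leaving a range of ramification behavior for $K_1/F$ and $K_2/F$ that will induce orbit degenerations requiring case-by-case analysis. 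Once the sign-bijection is established and the appropriate Haar measure normalizations (Remark \ref{rem:haar}) are verified to match, the identity should reduce to a clean combinatorial count.
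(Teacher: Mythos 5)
This statement is a conjecture, not a theorem: the paper offers no proof of Conjecture \ref{conj:BFL} in general. The only case it settles is the degenerate one where $K_1$ or $K_2$ is isomorphic to $F\times F$, and there the argument is much simpler than anything in your plan --- in that case $\eta$ is trivial and Corollary \ref{cor:conjugacy} shows the pairs $(\Phi_0,\Phi_3)$ and $(\Phi_1,\Phi_2)$ are actually $\GL_{2h}(F)$-conjugate, so the two integrals coincide term by term. Beyond that, the authors only report unpublished numerical verification for $h\in\{1,2\}$ with $f=\mathbf{1}$. So what you have written cannot be compared to a proof in the paper; it can only be judged as a program, and as a program it has two genuine gaps.

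First, your reduction to the unit element of the Hecke algebra is not justified. Both sides are indeed linear functionals on the spherical Hecke algebra, but that alone gives no ``equivariance'' from which the identity for all $f$ follows from the case $f=\mathbf{1}$. In the analogous Jacquet--Rallis and Guo--Jacquet settings, the full Hecke-algebra statement is strictly stronger than the unit-element statement, and passing from one to the other requires either a global trace-formula argument or an explicit comparison of Satake/base-change transforms; nothing of that sort is sketched here, and the conjecture as stated demands the identity for every $f$ as in \eqref{test function}. Second, the step you yourself flag as ``the main obstacle'' --- constructing a bijection between the lattice-counting sets on the two sides that carries the character $\eta(h_3)=\eta_{K/K_3}(\det h_3)$ to a combinatorial parity on the $(\Phi_1,\Phi_2)$-side --- is precisely the entire content of the fundamental lemma. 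Lemma \ref{lem:match iso} and Corollary \ref{cor:conjugacy} identify the abstract algebras $F(\Phi_1,\Phi_2)$ and $F(\Phi_0,\Phi_3)$ attached to matching pairs, but they embed into different ambient algebras (here both are $M_{2h}(F)$, but the orbit structures of stable lattices under the two tori $H_1\cap H_2$ and $H_0\cap H_3$ are not identified by any map you construct), so no orbit bijection follows formally. Until that bijection, its compatibility with the measure normalizations of Remark \ref{rem:haar}, and the treatment of the ramified cases of $K_1/F$, $K_2/F$ are actually carried out, this remains a plausible strategy for an open conjecture rather than a proof.
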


\begin{remark}\label{rem:FLequivalence}
When  $K_1\iso K_2$, Proposition \ref{prop:split orbital comparison} implies that  this conjecture is equivalent to the Guo-Jacquet fundamental lemma, which was proved by Guo \cite{Guo} when $f = \mathbf{1}$ is the characteristic function of $\GL_2(\co_F)$.  
\end{remark}

There is one case in which the biquadratic fundamental lemma is trivial.

\begin{proposition}\label{prop:split FL}
If either one of $K_1$ or $K_2$ is isomorphic to $F\times F$, then 
 Conjecture \ref{conj:BFL} is true.
\end{proposition}

\begin{proof}
Assume for simplicity that  $K_1\iso F\times F$, the other case being entirely similar.
In this case there are isomorphisms
\[
K_1 \iso K_0,  \quad K_2 \iso K_3, \quad K\iso K_2 \times K_2
\]
(which we now fix), and the character $\eta : K_3^\times \to \{ \pm 1\}$ is trivial. 
Thus we need only prove the equality 
\begin{align*}\lefteqn{
 \int_{ H_0\cap H_3 \backslash H_0\times H_3 } f( g_0^{-1} h_0^{-1} h_3 g_3 )
 \, dh_0\, dh_3 } \\
& =
 \int_{  H_1\cap H_2 \backslash H_1\times H_2  } f( g_1^{-1} h_1^{-1} h_2 g_2 ) \, dh_1\, dh_2 
\end{align*}
for any Hecke function \eqref{test function}.

 Recalling Remark \ref{rem:orbital invariant}, both integrals are independent of the choices of  $g_0, g_1,g_2,g_3\in \GL_{2h}(F)$ satisfying
\begin{equation*}
 \Phi_i( \co_{K_i})  \subset   g_i  M_{2h}(\co_F)  g_i^{-1}.
\end{equation*}
More to the point, the integral on the left is unchanged if we replace the pair $(\Phi_0,\Phi_3)$ by a pair that is  $\GL_{2h}(F)$-conjugate to it.   Corollary \ref{cor:conjugacy} implies that  $(\Phi_0,\Phi_3)$ and  $(\Phi_1,\Phi_2)$ are conjugate, and the equality of integrals follows.
\end{proof}


\section{Intersections in Lubin-Tate space}


We continue to let $F$ be a nonarchimedean local field, and now assume that $K_1$ and $K_2$ are separable quadratic field extensions of $F$.

In the Lubin-Tate deformation space of a formal $\co_F$-module, one can construct cycles of formal $\co_{K_1}$-modules and $\co_{K_2}$-modules.  We  prove a formula  expressing the intersection multiplicities of such  cycles in terms of the invariant polynomials of \S \ref{ss:polynomial}; when $K_1= K_2$, this recovers the main result of \cite{Li}.
We then state a conjectural  arithmetic fundamental lemma, in the spirit of \cite{Zhang}, relating the intersection multiplicity to the central derivative of an orbital integral.


\subsection{Initial data}
\label{ss:cycle data}


Let $\breve{F}$ be the completion of the maximal unramified extension of $F$, and let $\kk$ be the residue field of $\co_{\breve{F}}$. 
Choose an extension of the reduction  $\co_F \to \kk$ to a ring homomorphism
\begin{equation}\label{structure map}
\co_K   \to \kk.
\end{equation}
In particular, $\kk$ is both an $\co_{K_1}$-algebra and an $\co_{K_2}$-algebra.

Fix a pair $(\mathscr{G},\mathscr{M})$ in which
\begin{itemize}
\item
$\mathscr{G}$ is a formal  $\co_F$-module  over $\kk$ of dimension $1$ and height $2h$,
\item
$\mathscr{M}$ is a free $\co_F$-module  of rank $2h$.  
 \end{itemize} 
  Fix also pairs $(\mathscr{H}_1,\mathscr{N}_1)$ and $(\mathscr{H}_2,\mathscr{N}_2)$ in which
 \begin{itemize}
 \item
$\mathscr{H}_i$ is a formal  $\co_{K_i}$-module  over $\kk$ of dimension $1$ and height $h$,
\item
 $\mathscr{N}_i$ is a free $\co_{K_i}$-module  of rank $h$.
 \end{itemize} 
 
 \begin{remark}\label{rem:strict}
 A one dimensional formal $\co_F$-module $G$ over an  $\co_F$-algebra $A$ is always assumed to be \emph{strict}, in the sense that the induced action 
 $\co_F \to \End_A(\mathrm{Lie}(G) ) \iso A$ agrees with the structure map.  
 A similar assumption is made for formal $\co_{K_i}$-modules, which is why we have fixed an $\co_K$-algebra structure \eqref{structure map} on $\kk$.
 \end{remark}

There are $\co_F$-linear isomorphisms  $\mathscr{G} \iso \mathscr{H}_i$ and $\mathscr{M}\iso \mathscr{N}_i$.
Rather than fixing such isomorphisms, we work in slightly more generality and fix 
 pairs $( \phi_1 , \psi_1)$ and $( \phi_2 , \psi_2)$   of invertible elements 
\begin{align}\label{cycle input}
\phi_1 \in \Hom_{\co_F}( \mathscr{H}_1,\mathscr{G}  ) [1/\pi],  & \qquad  \psi_1 \in \Hom_{\co_F}( \mathscr{N}_1,  \mathscr{M}  ) [1/\pi], \\
 \phi_2 \in \Hom_{\co_F}( \mathscr{H}_2,\mathscr{G}  ) [1/\pi],   &\qquad    \psi_2 \in \Hom_{\co_F}(  \mathscr{N}_2,\mathscr{M} ) [1/\pi] . \nonumber
\end{align}
Denote by $\mathrm{ht}(\phi_i)$ the height of $\phi_i$ as a quasi-isogeny of formal $\co_F$-modules, and define
\begin{equation*}
\mathrm{ht}(\psi_i) = \ord_F ( \det(  \eta_i \circ\psi_i))
\end{equation*}
for any $\co_F$-linear isomorphism $\eta_i :   \mathscr{M}  \iso \mathscr{N}_i$.

\begin{remark}\label{rem:simplePhiPsi}
Although for now we work in the generality described above, eventually we will  assume that 
\[
\phi_i : \mathscr{H}_i \iso \mathscr{G} \quad \mbox{and}\quad \psi_i : \mathscr{N}_i \iso \mathscr{M}
\] 
are $\co_F$-linear isomorphisms, so that $\mathrm{ht}(\phi_i) =0= \mathrm{ht}(\psi_i)$.
The reader will lose little by restricting to this special case throughout.
\end{remark}

Define central simple $F$-algebras
\begin{equation*}
B(\mathscr{G}) = \End_{\co_F}( \mathscr{G}) [1/\pi],  \qquad 
B(\mathscr{M}) = \End_{\co_F}( \mathscr{M}) [1/\pi] 
\end{equation*}
 of dimension $4h^2$.  The first is a division algebra.
 The second is isomorphic to $M_{2h}(F)$, but we do not fix such an isomorphism.

The data \eqref{cycle input}, together with the natural actions of  $\co_{K_i}$  on $\mathscr{H}_i$ and $\mathscr{N}_i$, determine $F$-algebra embeddings
\begin{align}\label{four embeddings}
\Phi_1 : K_1 \to B(\mathscr{G}) , & \qquad \Psi_1 : K_1 \to B( \mathscr{M} )   \\ 
\Phi_2 : K_2 \to B(\mathscr{G})  , & \qquad \Psi_2 : K_2 \to  B(  \mathscr{M} )   . \nonumber
\end{align}
The constructions of \eqref{invariant}  then  provide us with monic degree $h$ polynomials
\begin{equation*}
\Inv_{(\Phi_1,\Phi_2)} \in K_3[t],\qquad \Inv_{(\Psi_1,\Psi_2)} \in K_3[t].
\end{equation*}
We denote by 
\begin{equation}\label{resultant}
R(\Phi_1,\Phi_2,\Psi_1,\Psi_2) =  \mathrm{Res} \left( \Inv_{( \Phi_1,\Phi_2)} , \Inv_{(\Psi_1,\Psi_2)}  \right) \in K_3
\end{equation}
their resultant, and, when no confusion can arise,   abbreviate this to 
\begin{equation*}
R=R(\Phi_1,\Phi_2,\Psi_1,\Psi_2).
\end{equation*}
It  follows from  the functional equations (Proposition \ref{prop:functional})
satisfied by both invariant polynomials that 
$R ^{\sigma_3} = (-1)^h \cdot R.$  In particular $R^2\in F$, and  we abbreviate $|R| = \sqrt{ |R^2|}$.

\begin{proposition}\label{prop:good resultant}
If $(\Phi_1,\Phi_2)$ is regular semisimple, then $|R| \neq 0$.
\end{proposition}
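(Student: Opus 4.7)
The plan is to show that vanishing of $R$ would produce an algebra-theoretic object that exists inside both $B(\mathscr{G})$ and $B(\mathscr{M})$ but must be simultaneously a quaternion division algebra and a split one.  Since $R$ is the resultant, $R=0$ precisely when $\Inv_{(\Phi_1,\Phi_2)}$ and $\Inv_{(\Psi_1,\Psi_2)}$ share a root in some algebraic closure.  Via the change-of-variable $T \mapsto c+dT$ (with $c,d \in K_3$ depending only on the chosen generators $x_1,x_2$) used in the proof of Lemma~\ref{lem:good w}, a common root translates into a common irreducible factor $p(T) \in F[T]$ of the minimal polynomial $Q_{\mathbf{w}_\Phi}$ of $\mathbf{w}_\Phi \in B(\mathscr{G})$ and the minimal polynomial $Q_{\mathbf{w}_\Psi}$ of $\mathbf{w}_\Psi \in B(\mathscr{M})$.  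I would argue for a contradiction from the existence of such a $p$, setting $E = F[T]/(p(T))$.

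On the $\Phi$-side, regular semisimplicity together with Proposition~\ref{prop:central algebra} guarantees that $L_\Phi = F[\mathbf{w}_\Phi]$ is \'etale of dimension $h$, so $E$ appears as one of its field factors, and $D_\Phi = F(\Phi_1,\Phi_2) \otimes_{L_\Phi} E$ is a quaternion $E$-algebra sitting inside $B(\mathscr{G})$.  Being a subalgebra of the division algebra $B(\mathscr{G})$, $D_\Phi$ is itself division.  By Proposition~\ref{prop:wz properties} applied over $E$, it admits the cyclic presentation $(K_1 \otimes_F E,\ P(\theta))_E$, where $\theta \in E$ denotes the image of $\mathbf{w}_\Phi$ and
\[
P(T) = T^2 - \mathrm{Tr}(x_1)\mathrm{Tr}(x_2)\, T + \mathrm{Tr}(x_1^2)\mathrm{Nm}(x_2) + \mathrm{Tr}(x_2^2)\mathrm{Nm}(x_1).
\]
In particular, $P(\theta)$ is \emph{not} a norm from $(K_1 \otimes_F E)^\times$ to $E^\times$.

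On the $\Psi$-side, the $F$-algebra map $E \to F[\mathbf{w}_\Psi] \subset B(\mathscr{M})$ sending $\theta \mapsto \mathbf{w}_\Psi$ makes $B(\mathscr{M})$ into an $E$-algebra.  The centralizer of the image of $E$ inside $B(\mathscr{M})$ is a central simple $E$-algebra whose Brauer class is that of $B(\mathscr{M}) \otimes_F E$, which is trivial because $B(\mathscr{M}) \cong M_{2h}(F)$ is split.  This centralizer contains $\Psi_i(K_i) \otimes_F E$ and $\mathbf{z}_\Psi$, subject to the relations of Proposition~\ref{prop:wz properties}, with $\mathbf{z}_\Psi^2 \mapsto P(\theta)$.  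This realizes the cyclic algebra $(K_1 \otimes_F E,\ P(\theta))_E$ inside a Brauer-trivial algebra, so $P(\theta)$ \emph{is} a norm from $(K_1 \otimes_F E)^\times$, contradicting the $\Phi$-side conclusion.

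The main obstacle is the last paragraph: because $(\Psi_1,\Psi_2)$ is not assumed regular semisimple, $\mathbf{z}_\Psi$ may be a zero divisor in $B(\mathscr{M})$, in which case the cyclic-algebra presentation does not apply verbatim.  These degenerate subcases should be isolated using the fact that $P(\theta) \in E^\times$ on the $\Phi$-side (since $\mathbf{z}_\Phi$ is invertible by Proposition~\ref{prop:central algebra}); the nonvanishing of $P(\theta)$ is incompatible with $\mathbf{z}_\Psi$ projecting to a nilpotent image in the $E$-quotient, so after disposing of the pathological cases the Brauer-theoretic comparison survives and finishes the proof.
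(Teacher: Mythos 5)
Your overall strategy (play the quaternion division algebra on the $\Phi$-side against the split algebra $B(\mathscr{M})$) is the same contradiction the paper exploits, but two steps in your $\Psi$-side argument do not work as written. First, the map you need does not exist: you ask for an $F$-algebra map $E=F[T]/(p)\to F[\mathbf{w}_\Psi]$ with $\theta\mapsto\mathbf{w}_\Psi$, but such a map exists only if $p(\mathbf{w}_\Psi)=0$, i.e.\ only if $p$ is the \emph{entire} minimal polynomial of $\mathbf{w}_\Psi$; since $(\Psi_1,\Psi_2)$ is not assumed regular semisimple, $p$ is merely one irreducible factor, and the natural map goes the other way (a surjection $F[\mathbf{w}_\Psi]\twoheadrightarrow E$). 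So ``the centralizer of the image of $E$ inside $B(\mathscr{M})$'' is not defined, and this is a more basic problem than the one you flag ($\mathbf{z}_\Psi$ being a zero divisor). The repair is to restrict to the nonzero subspace $W=\ker p(\mathbf{w}_\Psi)\subset \mathscr{M}[1/\pi]$: it is an $E$-vector space, it is preserved by $\Psi_1(K_1)$, $\Psi_2(K_2)$ and $\mathbf{z}_\Psi$ (all commute with $\mathbf{w}_\Psi$), these act $E$-linearly, and $\mathbf{z}_\Psi|_W$ is invertible because its square acts as the scalar $P(\theta)\in E^\times$ supplied by the $\Phi$-side; so $W$ becomes a nonzero module over the quaternion algebra $(K_1\otimes_F E,\,P(\theta))_E$.

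Second, even granting an embedding of $E$, the inference ``this realizes the cyclic algebra inside a Brauer-trivial algebra, so $P(\theta)$ is a norm'' is a non sequitur: a quaternion division algebra over $E$ embeds into $M_n(E)$ for every $n$ divisible by $4$, so Brauer-triviality of the ambient algebra alone forces nothing. What makes the contradiction go through is a dimension count, and for that you must use that $B(\mathscr{G})$ is a division algebra: then $L_\Phi=F[\mathbf{w}_\Phi]$ is a \emph{field} of degree exactly $h$, so $p=Q_{\mathbf{w}_\Phi}$ has degree $h$ and $[E:F]=h$; consequently $\dim_E W\le \dim_F(\mathscr{M}[1/\pi])/h=2$, whereas any nonzero module over a quaternion division algebra over $E$ has $E$-dimension at least $4$. (Equivalently, in your centralizer formulation one needs the centralizer to be exactly $4$-dimensional over $E$, i.e.\ $M_2(E)$, which again requires $[E:F]=h$.) You never invoke $[E:F]=h$, so as written the key step fails. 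Note this dimension count is precisely the paper's contradiction; the paper reaches it differently, by observing that irreducibility of $\Inv_{(\Phi_1,\Phi_2)}$ (again from $B(\mathscr{G})$ being division) upgrades a common root to \emph{equality} of the invariant polynomials, then applying Proposition~\ref{prop:quaternion recovery} to put a quaternion division algebra over a degree-$h$ field inside $M_{2h}(F)$. Finally, your phrase ``share a root in some algebraic closure'' implicitly assumes $K_3$ is a field; when $K_1\iso K_2$ one has $K_3\iso F\times F$ and must argue componentwise (using $R^{\sigma_3}=(-1)^hR$ to see both components of $R$ vanish), as the paper does.
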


\begin{proof}
First suppose that $K_1\not\iso K_2$, so that $K_3$ is a field.  If $|R|=0$ then $R=0$, and so the polynomials 
$\Inv_{( \Phi_1,\Phi_2)}$ and  $\Inv_{(\Psi_1,\Psi_2)}$ share a common root in an algebraic closure of $K_3$.

Consider the $F$-subalgebra
\[
F(\Phi_1,\Phi_2) \subset B(\mathscr{G})
\]
generated by  $\Phi_1(K_1) \cup \Phi_2(K_2)$.   As $B(\mathscr{G})$ is a division algebra,   Proposition \ref{prop:central algebra} implies that $F(\Phi_1,\Phi_2)$ is a quaternion division algebra over its center 
$L=F[\mathbf{w}]$, which is a degree $h$ field extension of $F$. In particular, the minimal polynomial of $\mathbf{w}$ over $F$ is irreducible.   As in the proof of Lemma \ref{lem:good w}, this minimal polynomial is related to the invariant polynomial $Q_\mathbf{s}=\Inv_{(\Phi_1,\Phi_2)}$ by a  change of variables, and so $\Inv_{(\Phi_1,\Phi_2)}$ is itself irreducible.

As $\Inv_{( \Phi_1,\Phi_2)}$ and  $\Inv_{(\Psi_1,\Psi_2)}$ have the same degree, share a common root, and the first is irreducible, they must be equal.  By Proposition \ref{prop:quaternion recovery}, the $F$-subalgebra
\[
F(\Psi_1,\Psi_2) \subset B(\mathscr{M})
\]
 is also quaternion division algebra over $L$, and so any representation of it has $F$-dimension a multiple of $4h$.  As $B(\mathscr{M}) \iso M_{2h}(F)$, we have arrived at a contradiction.

 Now suppose that $K_1 \iso K_2$, so that $K_3 \iso F\times F$.  This case is not really different. If we write
 $R=(R_1,R_2) \in F\times F$,  the relation  $R^{\sigma_3}=(-1)^h R$ noted above implies $R_1=(-1)^h R_2$.
 Thus if  $|R|= |R_1| \cdot |R_2|$ vanishes both $R_1=0$ and $R_2=0$.  
 
As  $K_3[T] \iso F[T] \times F[T]$, we may view
 $\Inv_{(\Phi_1,\Phi_2)}$ and $\Inv_{(\Psi_1,\Psi_2)}$  as pairs of polynomials with coefficients in $F$. 
  The argument above shows  that each component of $\Inv_{(\Phi_1,\Phi_2)}$  is irreducible, and the vanishing of $R_1$ and $R_2$ imply that those components agree with the components of  $\Inv_{(\Psi_1,\Psi_2)}$.  This again implies that $F(\Psi_1,\Psi_2)$ is a quaternion division algebra over a degree $h$ field extension of $F$, contradicting $F(\Psi_1,\Psi_2) \subset M_{2h}(F)$.
 \end{proof}


\subsection{Height calculations}
\label{ss:height}


Define   a formal $\co_F$-module 
\begin{equation*}
\mathbf{X} =  \Hom_{\co_F} ( \mathscr{M} ,   \mathscr{G} )
\end{equation*}
over $\kk$, noncanonically isomorphic to $\mathscr{G}^{ 2h}$.  
That is to say, $\mathbf{X}$ is the formal scheme over $\kk$ whose functor of points assigns to an Artinian $\kk$-algebra the $\co_F$-module
\[
\mathbf{X}(A) =  \Hom_{\co_F} ( \mathscr{M} ,   \mathscr{G}(A) ).
\]
Similarly,  for $i\in \{ 1,2\}$  define formal $\co_F$-modules  
\begin{equation*}
\mathbf{Y}_i  =  \Hom_{\co_{K_i}} (  \mathscr{N}_i ,  \mathscr{H}_i  ),  \qquad 
\overline{ \mathbf{Y}} _i  =  \Hom_{ \co_{K_i}}  (   \mathscr{N}_i ,  \overline{\mathscr{H}}_i  ),
\end{equation*}
where $\overline{\mathscr{H}}_i =  \mathscr{H}_i $ endowed with its conjugate $\co_{K_i}$-action.
  Each  is noncanonically isomorphic to $\mathscr{G}^h$.
There are natural morphisms 
\[
\xymatrix{
 { \mathbf{Y}_i }  \ar[dr]_{s_i}  &  &  { \overline{\mathbf{Y}}_i } \ar[dl]^{\overline{s}_i}   \\
 & \Hom_{\co_F} (  \mathscr{N}_i  ,  \mathscr{H}_i  ) ,
}
\]
and the composition  
\begin{equation}\label{full splitting comp}
\mathbf{Y}_i \times \overline{\mathbf{Y}}_i 
\map{s_i\times \overline{s}_i } 
\Hom_{\co_F} (   \mathscr{N}_i  ,  \mathscr{H}_i )
\map{  x \mapsto   \phi_i \circ  x \circ \psi_i^{-1} } \Hom_{\co_F} (   \mathscr{M}  ,\mathscr{G} )
= \mathbf{X}
 \end{equation}
 defines a quasi-isogeny 
 \begin{equation}\label{full splitting}
 \Delta_i   \in \Hom_{\co_F} (   \mathbf{Y}_i \times  \overline{\mathbf{Y}}_i ,  \mathbf{X}    )  [1/\pi] .
 \end{equation}

\begin{proposition}\label{prop:heights 1}
The quasi-isogeny  \eqref{full splitting} has  height
\[
\mathrm{ht}(\Delta_i) =  h^2 \cdot \ord_F(d_i)  + 2h \cdot \mathrm{ht}(\phi_i) - 2h \cdot \mathrm{ht}(\psi_i) ,
\]
 where  $d_i \in \co_F$ is a generator of  the discriminant of $K_i/F$.
 \end{proposition}

 \begin{proof}
By choosing an $\co_F$-basis $1, \eta \in \co_{K_i}$,   we identify the natural map 
   \begin{equation}\label{toy splitting 1}
   \Hom_{\co_{K_i}}( \co_{K_i} , \mathscr{H}_i) \times   \Hom_{ \co_{K_i}}( \co_{K_i} ,  \overline{\mathscr{H}}_i)
   \to \Hom_{\co_F} ( \co_{ K_i} , \mathscr{H}_i) 
 \end{equation}
 with the morphism of formal $\co_F$-modules
 \begin{equation}\label{toy splitting 2}
 \mathscr{H}_i \times \mathscr{H}_i \map{ (x,y) \mapsto (x+y, \eta x + \eta^{\sigma_i} y)  }   \mathscr{H}_i \times \mathscr{H}_i .
 \end{equation}
 The   different $\mathfrak{D}_i$  of $K_i/F$ is generated by $\eta - \eta^{\sigma_i}$, and so the kernel of  \eqref{toy splitting 2} is the image of 
 \[
 \mathscr{H}_i[\mathfrak{D}_i] \map{ x\mapsto (x,-x) } \mathscr{H}_i \times \mathscr{H}_i .
 \]
It follows that the isogenies \eqref{toy splitting 1} and \eqref{toy splitting 2}  have $\co_F$-height $h \cdot \ord_F(d_i)$.
As $\mathscr{N}_i$ is free of rank $h$ over $\co_{K_i}$, we deduce that that first arrow in \eqref{full splitting comp} is an isogeny of $\co_F$-height
$h^2 \cdot \ord_F(d_i)$.  The claim follows easily from this.
 \end{proof}

 The quasi-isogenies \eqref{full splitting} for $i\in \{1,2\}$ determine an element 
 \[
\Delta_2^{-1} \circ  \Delta_1  \in   \Hom_{\co_F} (  \mathbf{Y}_1 \times  \overline{\mathbf{Y}}_1  ,  \mathbf{Y}_2 \times  \overline{\mathbf{Y}}_2 ) [ 1/\pi],
 \]
which is  encoded by  four components.  
The two that interest us are
 \begin{equation} \label{matrix coefficients 1}
 \alpha_{12} \in  \Hom_{\co_F}( \mathbf{Y}_1 , \overline{\mathbf{Y}}_2) [1/\pi] ,\qquad  \beta_{12}\in  \Hom_{\co_F}( \overline{\mathbf{Y}}_1 , \mathbf{Y}_2) [1/\pi] . 
 \end{equation}
Similarly, 
  \[
\Delta_1^{-1} \circ  \Delta_2  \in  \Hom_{\co_F} (  \mathbf{Y}_2 \times  \overline{\mathbf{Y}}_2  ,  \mathbf{Y}_1 \times  \overline{\mathbf{Y}}_1 ) [ 1/\pi]
 \]
 is encoded by four components, and the two that  interest us are
  \begin{equation} \label{matrix coefficients 2}
 \alpha_{21} \in  \Hom_{\co_F}( \mathbf{Y}_2 , \overline{\mathbf{Y}}_1) [1/\pi] ,\qquad  \beta_{21}\in  \Hom_{\co_F}( \overline{\mathbf{Y}}_2 , \mathbf{Y}_1) [1/\pi] . 
 \end{equation}
 
 \begin{lemma}\label{lem:munu ht}
We have the equalities 
\[
\mathrm{ht}(\alpha_{12})  = \mathrm{ht}(\beta_{12}) \quad \mbox{and}\quad \mathrm{ht}(\alpha_{21})  = \mathrm{ht}(\beta_{21}).
\]
\end{lemma}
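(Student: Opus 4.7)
\emph{Proof plan.}  Both equalities follow from a single symmetry that swaps, for each $i \in \{1,2\}$, the formal $\co_F$-module $\mathbf{Y}_i$ with $\overline{\mathbf{Y}}_i$ while preserving $\co_F$-heights.  The starting point is that $\mathscr{H}_i$ and $\overline{\mathscr{H}}_i$ share the same underlying formal $\co_F$-module (they differ only in the $\co_{K_i}$-action), so $\mathbf{Y}_i$ and $\overline{\mathbf{Y}}_i$ are $\co_F$-isogenous.  Concretely, applying Noether--Skolem (Theorem~\ref{thm:ns}) to the two $F$-algebra embeddings $\Phi_i, \Phi_i\circ \sigma_i : K_i \to B(\mathscr{H}_i)$ produces a unit $j_i \in B(\mathscr{H}_i)$ with $j_i \circ \Phi_i(\lambda) = \Phi_i(\lambda^{\sigma_i}) \circ j_i$.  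Post-composition with $j_i$ carries $\co_{K_i}$-linear maps $\mathscr{N}_i \to \mathscr{H}_i$ to $\sigma_i$-semilinear ones, producing a quasi-isogeny of formal $\co_F$-modules
\[
\omega_i : \mathbf{Y}_i \to \overline{\mathbf{Y}}_i.
\]

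Next, exploiting the manifest symmetry of the defining formula $\Delta_i(y,\bar y) = \phi_i \circ (y+\bar y) \circ \psi_i^{-1}$ under the exchange $y \leftrightarrow \bar y$, I would verify an identity of the form
\[
\Delta_i \circ (\omega_i^{-1} \oplus \omega_i) \circ \mathrm{swap}_i = \mathbf{j}_i \circ \Delta_i
\]
inside $\Hom_{\co_F}(\mathbf{Y}_i \times \overline{\mathbf{Y}}_i , \mathbf{X})[1/\pi]$, where $\mathrm{swap}_i$ interchanges the two factors and $\mathbf{j}_i$ denotes the quasi-isogeny of $\mathbf{X}$ induced by conjugating by $j_i$ via $\phi_i$.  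Plugging this into $\Delta_2^{-1}\circ\Delta_1$ and tracing the effect on the four components, the operation "swap factors and conjugate by $\omega_i$" interchanges the diagonal and off-diagonal blocks.  Chasing through, one obtains relations
\[
\omega_2 \circ \alpha_{12} = \mathbf{j}_2 \circ \beta_{12} \circ \omega_1^{-1} \circ \mathbf{j}_1^{-1} \quad (\text{or a minor variant}),
\]
and similarly for $\alpha_{21},\beta_{21}$.  Taking heights, the contributions from $\omega_1, \omega_2, \mathbf{j}_1, \mathbf{j}_2$ cancel in pairs because they occur symmetrically on both sides of the $\alpha$-vs-$\beta$ swap, leaving $\mathrm{ht}(\alpha_{12}) = \mathrm{ht}(\beta_{12})$.

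The main obstacle is the bookkeeping needed to verify the stated identity and the cancellation of height contributions.  The delicate point is that the $j_i$ are only quasi-isogenies, not isogenies, so $\omega_i$ and $\mathbf{j}_i$ may have nonzero $\co_F$-heights individually; one must check that they enter the final relation in a way that cancels cleanly.  A potentially cleaner route that avoids Noether--Skolem altogether is to detect the heights of $\alpha_{12}$ and $\beta_{12}$ via $\pi$-adic valuations of determinants of the appropriate blocks of the matrix of $\Delta_2^{-1}\Delta_1$ acting on the rational Dieudonn\'e module of $\mathbf{X}$, and to invoke directly the involution on $K_i \otimes_F K_i$-eigenspaces to exchange the two blocks.
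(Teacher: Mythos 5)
Your overall strategy is the same as the paper's: manufacture a ``swap'' symmetry exchanging $\mathbf{Y}_i$ with $\overline{\mathbf{Y}}_i$, transport it through $\Delta_1$ and $\Delta_2$, and compare off-diagonal blocks. But there is a genuine gap in how you produce the symmetry. You apply Noether--Skolem separately for $i=1$ and $i=2$, obtaining $j_1\in B(\mathscr{H}_1)^\times$ and $j_2\in B(\mathscr{H}_2)^\times$ with no compatibility between them. Each of your identities $\mathbf{j}_i\circ\Delta_i=\Delta_i\circ S_i$ (with $S_i$ the twisted swap on $\mathbf{Y}_i\times\overline{\mathbf{Y}}_i$) is fine in isolation, but when you feed them into $M=\Delta_2^{-1}\circ\Delta_1$ the two diagrams have \emph{different} middle maps: $\mathbf{j}_1$ and $\mathbf{j}_2$ are induced by $\phi_1 j_1\phi_1^{-1}$ and $\phi_2 j_2\phi_2^{-1}$, a priori unrelated elements of $B(\mathscr{G})$. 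What you actually get is $M S_1=\Delta_2^{-1}\mathbf{j}_1\Delta_1$ and $S_2 M=\Delta_2^{-1}\mathbf{j}_2\Delta_1$, hence $M S_1 = \big[\Delta_2^{-1}(\mathbf{j}_1\mathbf{j}_2^{-1})\Delta_2\big]\cdot S_2 M$. The error factor $\Delta_2^{-1}(\mathbf{j}_1\mathbf{j}_2^{-1})\Delta_2$ comes from an element of $B(\mathscr{G})$ with no prescribed commutation relations with $\Phi_2$, so it need not respect the decomposition $\mathbf{Y}_2\times\overline{\mathbf{Y}}_2$, and the clean block relation of the shape $\omega_2\circ\alpha_{12}=\cdots\circ\beta_{12}\circ\cdots$ is not established. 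Even if it were, the auxiliary heights entering the two sides are governed by $\mathrm{ht}(j_1)$ and $\mathrm{ht}(j_2)$, which need not be equal (they can only be shifted by multiples of $2h$ using powers of $\pi$), so the claimed ``cancellation in pairs'' is exactly the unproved point.

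The missing idea, which is the heart of the paper's proof, is the existence of a \emph{single} $\gamma\in B(\mathscr{G})^\times$ intertwining both embeddings simultaneously: $\gamma\Phi_1(x)\gamma^{-1}=\Phi_1(x^{\sigma_1})$ for all $x\in K_1$ and $\gamma\Phi_2(y)\gamma^{-1}=\Phi_2(y^{\sigma_2})$ for all $y\in K_2$. This cannot be extracted from two separate applications of Theorem \ref{thm:ns}; the paper proves it by noting that for each $i$ the odd part $B_-^{\Phi_i}$ of the grading \eqref{grading} is the orthogonal complement of $B_+^{\Phi_i}$ under the reduced trace pairing, so a dimension count forces $B_-^{\Phi_1}\cap B_-^{\Phi_2}\neq 0$ (otherwise $B_+^{\Phi_1}\cap B_+^{\Phi_2}=0$, contradicting $1\in B_+^{\Phi_1}\cap B_+^{\Phi_2}$), and any nonzero element of this intersection is a unit because $B(\mathscr{G})$ is a division algebra. (When $(\Phi_1,\Phi_2)$ is regular semisimple one may take $\gamma=\mathbf{z}$ from \eqref{z def}, but the lemma is used without that hypothesis.) Conjugating this one $\gamma$ through $\phi_1$ and $\phi_2$ produces quasi-isogenies $\Gamma_i:\mathbf{Y}_i\to\overline{\mathbf{Y}}_i$ and $\overline{\Gamma}_i:\overline{\mathbf{Y}}_i\to\mathbf{Y}_i$ all of the \emph{same} height, and because the same $\gamma$ appears as the middle map in both swap diagrams one gets $\beta_{12}\circ\Gamma_1=\overline{\Gamma}_2\circ\alpha_{12}$, whence $\mathrm{ht}(\alpha_{12})=\mathrm{ht}(\beta_{12})$, and similarly with the indices reversed. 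Your fallback suggestion (reading off the heights as valuations of determinants of blocks on the rational Dieudonn\'e module) is only a reformulation: the equality of those two valuations is precisely what has to be proved, and it again requires the simultaneous intertwiner.
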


\begin{proof}
We first claim that there is a $\gamma \in B(\mathscr{G})^\times$ such that
 \[
\Phi_1( x^{\sigma_1} ) = \gamma \cdot  \Phi_1(x) \cdot  \gamma^{-1},\qquad
 \Phi_2( y^{\sigma_2} ) = \gamma  \cdot \Phi_2(y)  \cdot \gamma^{-1}.
\]
for all $x\in K_1$ and $y\in K_2$.  (We remark that if the pair $(\Phi_1,\Phi_2)$ is regular semisimple, the element 
 $\mathbf{z}\in B(\mathscr{G})$ defined by  \eqref{z def}  is a unit by Proposition \ref{prop:central algebra}, and   Proposition \ref{prop:wz properties} allows us to take $\gamma=\mathbf{z}$).
Abbreviate $B=B(\mathscr{G})$.
The embeddings $\Phi_1$ and  $\Phi_2$ of \eqref{four embeddings} determine two $\Z/2\Z$-gradings, exactly as in \eqref{grading}, 
\begin{equation*}
B=B_+^{\Phi_1} \oplus B_-^{\Phi_1} ,\qquad B=B_+^{\Phi_2} \oplus B_-^{\Phi_2}.
\end{equation*}
These  satisfy
\begin{equation*}
B_-^{\Phi_1} = ( B_+^{\Phi_1})^\perp ,\qquad B_-^{\Phi_2} = (B_+^{\Phi_2})^\perp,
\end{equation*}
where $\perp$ is orthogonal complement with respect to the nondegenerate bilinear form $(b_1 ,  b_2) \mapsto  \mathrm{Trd}(b_1b_2)$
determined by the reduced trace $\mathrm{Trd}:B \to F$.
If $B_-^{\Phi_1} \cap B_-^{\Phi_2}=0$ then $B_-^{\Phi_1} + B_-^{\Phi_2}=B$.
Applying $\perp$ to both sides of this last equality yields $B_+^{\Phi_1}\cap B_+^{\Phi_2}=0$, which contradicts  $1\in B_+^{\Phi_1}\cap B_+^{\Phi_2}$.
Any nonzero $\gamma \in B_-^{\Phi_1} \cap B_-^{\Phi_2}$ is contained in $B^\times$ (recall that $B$ is a division algebra) and satisfies the desired properties.

Using the quasi-isogenies of \eqref{cycle input}, we obtain quasi-isogenies
\begin{align*}
\gamma_1 = \phi_1^{-1} \circ \gamma \circ \phi_1  & \in \End_{\co_F}(  \mathscr{H}_1  )  [1/\pi]  \\
\gamma_2 = \phi_2^{-1} \circ \gamma \circ \phi_2  
& \in \End_{\co_F}( \mathscr{H}_2  )  [1/\pi] ,
\end{align*}
which do not commute with the actions of $\co_{K_1}$ and $\co_{K_2}$.
Instead, they define quasi-isogenies
\begin{align*}
\gamma_1  \in \Hom_{\co_{K_1}}( \mathscr{H}_1 , \overline{\mathscr{H}}_1 )  [1/\pi] ,  & \qquad
\overline{\gamma}_1   \in \Hom_{\co_{K_1}}(   \overline{\mathscr{H}}_1 , \mathscr{H}_1  )  [1/\pi] ,  \\
\gamma_2  \in \Hom_{\co_{K_2}}( \mathscr{H}_2 ,  \overline{\mathscr{H}}_2 )  [1/\pi] ,  &\qquad
\overline{\gamma}_2  \in \Hom_{\co_{K_2}}(   \overline{\mathscr{H}}_2  , \mathscr{H}_2  )  [1/\pi] ,
\end{align*}
which in turn  determine  quasi-isogenies
\begin{align*}
\Gamma_1  \in \Hom_{\co_F}( \mathbf{Y}_1 , \overline{\mathbf{Y}}_1)  [1/\pi] ,  & \qquad
\overline{\Gamma}_1   \in \Hom_{\co_F}(  \overline{\mathbf{Y}}_1 , \mathbf{Y}_1  )  [1/\pi] ,  \\
\Gamma_2 \in \Hom_{\co_F}( \mathbf{Y}_2 , \overline{\mathbf{Y}}_2 )  [1/\pi] ,  &\qquad
\overline{\Gamma}_2  \in \Hom_{\co_F}(   \overline{\mathbf{Y}}_2  , \mathbf{Y}_2  )  [1/\pi] ,
\end{align*}
all of the same height, and  making the diagram 
\begin{equation*}
\xymatrix{
{   \mathbf{Y}_1 \times \overline{\mathbf{Y}}_1 } \ar[d]^{\Delta_1}  \ar[rrrr]^{ (a,b) \mapsto ( \overline{\Gamma}_1(b),\Gamma_1(a))}&  & & & {   \mathbf{Y}_1 \times \overline{\mathbf{Y}}_1 } \ar[d]_{\Delta_1}  \\
{ \mathbf{X} }  \ar[rrrr]^\gamma &  & &  &  {   \mathbf{X}  }   \\
{   \mathbf{Y}_2 \times \overline{\mathbf{Y}}_2 } \ar[u]_{\Delta_2}   \ar[rrrr]_{ (a,b) \mapsto (\overline{\Gamma}_2(b),\Gamma_2(a))} & & &  & {   \mathbf{Y}_2 \times \overline{\mathbf{Y}}_2 }   \ar[u]^{\Delta_2}  
}
\end{equation*}
commute.
The commutativity implies that $\beta_{12} \circ \Gamma_1 = \overline{\Gamma}_2 \circ \alpha_{12}$, so  $\mathrm{ht}(\alpha_{12})=\mathrm{ht}(\beta_{12})$.   The equality $\mathrm{ht}(\alpha_{21})=\mathrm{ht}(\beta_{21})$ is proved similarly.
\end{proof}

 \begin{proposition}\label{prop:heights 2}
The heights of \eqref{matrix coefficients 1}  and \eqref{matrix coefficients 2}  are related to the resultant \eqref{resultant} by 
\[
  \mathrm{ht}(\alpha_{12})  +   \mathrm{ht}(\alpha_{21}) 
=  \ord_F ( R^2) =  \mathrm{ht}(\beta_{12})  +   \mathrm{ht}(\beta_{21}) .
\]
\end{proposition}

\begin{proof}
If we define idempotent elements
\begin{equation*}
e_1,f_1,e_2,f_2 \in \End_{\co_F}(\mathbf{X})[1/\pi] 
\end{equation*}
by the commutativity of the diagrams
\begin{equation*}
\xymatrix{
{   \mathbf{Y}_i \times \overline{\mathbf{Y}}_i} \ar[rr]^{  (a,b) \mapsto (  a,0) }  \ar[d]_{ \Delta_i} 
&   & {  \mathbf{Y}_i \times \overline{\mathbf{Y}}_i }  \ar[d]_{\Delta_i} 
&  {   \mathbf{Y}_i \times \overline{\mathbf{Y}}_i} \ar[rr]^{  (a,b) \mapsto (  0,b) }  \ar[d]_{ \Delta_i}  
&   & {  \mathbf{Y}_i \times \overline{\mathbf{Y}}_i }  \ar[d]_{\Delta_i}  \\
{  \mathbf{X} } \ar[rr]^{  e_i   }  
 &   &  \mathbf{X}  
&   {  \mathbf{X} } \ar[rr]^{  f_i   }  
&   &  \mathbf{X}    ,
 }
\end{equation*} 
then the composition
\[
 \mathbf{Y}_1 \times \overline{\mathbf{Y}}_1 \map{\Delta_1 } \mathbf{X} \map{ f_2e_1+e_2 f_1}  \mathbf{X} \map{ \Delta_2^{-1}}  \mathbf{Y}_2 \times \overline{\mathbf{Y}}_2
\]
is given by $(a,b) \mapsto ( \beta_{12}(b) , \alpha_{12}(a) )$.  It follows that 
\[
 \mathrm{ht}(\alpha_{12})  + \mathrm{ht}(\beta_{12})    =
\mathrm{ht}  ( f_2e_1+e_2f_1) +  \mathrm{ht}( \Delta_1) - \mathrm{ht}(  \Delta_2) .
 \]
The same equality holds with  the indices $1$ and $2$ switched everywhere.
Adding these  together and using Lemma \ref{lem:munu ht}  shows that 
\begin{equation}  \label{toCSA}
 2 \mathrm{ht}(\alpha_{12})  +  2 \mathrm{ht}(\alpha_{21})  
 =   \mathrm{ht} (f_2e_1+e_2f_1)   +   \mathrm{ht} (f_1e_2+e_1f_2)  . 
\end{equation}

Some elementary algebra shows that  $e_i$ and $f_i$ are the images of 
$(1,0)$ and $(0,1)$, respectively, under
\[
K_i \times K_i \iso K_i \otimes_F K_i  \map{ \Phi_i \otimes\Psi_i } B(\mathscr{G}) \otimes_F B(\mathscr{M})^{\mathrm{op}}
\iso  \End_{\co_F}(\mathbf{X})[1/\pi]
\]
where the first isomorphism is the inverse of 
\[
K_i \otimes_F K_i \map{ a \otimes b \mapsto (ab , a b^{\sigma_i} ) } K_i \times K_i,
\]
and $\mathrm{op}$ indicates the opposite algebra.

Extend \eqref{four embeddings} to $K_3$-algebra embeddings  
\begin{align*}
\Phi_i & : K \to C(\mathscr{G}) = B(\mathscr{G})  \otimes_F K_3 \\
  \Psi_i & : K \to C(\mathscr{G}) = B(\mathscr{G})  \otimes_F K_3
\end{align*}
 as in  \eqref{induced embeddings}, and  denote by 
$ \mathrm{Nrd} : C(\mathscr{G}) \otimes_{K_3} C (\mathscr{M})^\mathrm{op} \to K_3$
 the  reduced norm as a central simple $K_3$-algebra.  
If we fix any $K_3$-algebra generator $y\in K$, the images of these idempotents in
 $C(\mathscr{G}) \otimes_{K_3} C(\mathscr{M})^\mathrm{op}$ are given by the explicit formulas
\begin{align*}
e_i & =  [  \Phi_i( y-y^{\tau_3})  \otimes 1]^{-1} \cdot [   \Phi_i(y) \otimes 1 - 1 \otimes \Psi_i(y^{\tau_3})  ] \\
f_i  & =  [  \Phi_i( y-y^{\tau_3})  \otimes 1]^{-1} \cdot [   \Phi_i(y) \otimes 1 - 1 \otimes \Psi_i(y)  ] .
\end{align*}

In general, if $a \in C(\mathscr{G})$ and $b \in C(\mathscr{M})^\mathrm{op}$ have reduced  characteristic polynomials 
  $P_a, P_b \in K_3[t]$, then 
\begin{equation*}
\mathrm{Res}(P_a,P_b)  = \mathrm{Nrd} ( a \otimes 1-1\otimes b)   .
\end{equation*}
Indeed, after  extending scalars we may assume that  both $C (\mathscr{G})$ and  $C(\mathscr{M})^\mathrm{op}$ are matrix algebras.
If $a$ and $b$ are diagonalizable then the equality is obvious, and the general case follows by a
  Zariski density argument.

  The construction  \eqref{better s} attaches to $(\Phi_1,\Phi_2)$ and $(\Psi_1,\Psi_2)$ elements
\[
\mathbf{s}_\Phi \in C(\mathscr{G}) \quad\mbox{and}\quad \mathbf{s}_\Psi \in C(\mathscr{M})^\mathrm{op} ,
\]
whose   reduced characteristic polynomials $P_{\mathbf{s}_\Phi }, P_{\mathbf{s}_\Psi } \in K_3[T]$ 
are the  squares of the invariant polynomials of $(\Phi_1,\Phi_2)$ and $(\Psi_1,\Psi_2)$, respectively.
Thus
\begin{align*}
R^4  & = \mathrm{Res}( \Inv_{(\Phi_1,\Phi_2)}^2  ,  \Inv_{(\Psi_1,\Psi_2)}^2 )   \\
&  =  \mathrm{Res}(P_{\mathbf{s}_\Phi},P_{\mathbf{s}_\Psi } )    \\
& =  \mathrm{Nrd} ( \mathbf{s}_\Phi \otimes 1 -  1 \otimes \mathbf{s}_\Psi )  .
\end{align*}
It follows from Proposition \ref{prop:st properties} that 
\[
-( \mathbf{s}_\Phi \otimes 1-  1 \otimes \mathbf{s}_\Psi)
=
(1-\mathbf{s}_\Phi) \otimes 1 -  1 \otimes (1-\mathbf{s}_\Psi)
=
 \mathbf{s}^{\sigma_3}_\Phi \otimes 1 -  1 \otimes \mathbf{s}^{\sigma_3}_\Psi,
\]
and using this and Lemma \ref{lem:s conj} we find
\begin{equation}\label{first resultant}
R^4 
=
\mathrm{Nrd} \left[
 \frac{ (  \Phi_1(y) - \Phi_2(y) )^2  }{(y-y^{\tau_3})^2} \otimes 1 - 1 \otimes  \frac{ (  \Psi_1(y) - \Psi_2(y) )^2  }{(y-y^{\tau_3})^2} \right] .
\end{equation}

Define  $S,T \in C(\mathscr{G}) \otimes_{K_3} C(\mathscr{M})^\mathrm{op}$ by 
\begin{align*}
S & = [ \Phi_1(y) -\Phi_2(y) ] \otimes 1 + 1 \otimes [ \Psi_1(y) -\Psi_2(y) ] \\
& = [ \Phi_1(y-y^{\tau_3}) \otimes 1] \cdot e_1  - [ \Phi_2(y-y^{\tau_3}) \otimes 1] \cdot e_2 \\
T & =  [ \Phi_1(y) -\Phi_2(y) ] \otimes 1 - 1 \otimes [ \Psi_1(y) -\Psi_2(y) ] \\
& =  [ \Phi_1(y-y^{\tau_3}) \otimes 1] \cdot f_1  - [ \Phi_2(y-y^{\tau_3}) \otimes 1] \cdot f_2 .
\end{align*}
On the one hand, we have
\[
ST = [ \Phi_1(y) -\Phi_2(y) ]^2 \otimes 1 - 1 \otimes [ \Psi_1(y) -\Psi_2(y) ]^2,
\]
and comparison with \eqref{first resultant} shows that 
\begin{equation}\label{resultant 3}
R^4=  \frac{  \mathrm{Nrd}(ST)} { \mathrm{Nrd} ( \Phi_1( y-y^{\tau_3}) \otimes 1 ) \cdot  \mathrm{Nrd} ( \Phi_2( y-y^{\tau_3})  \otimes 1 ) }  .
\end{equation}
On the other hand, 
\begin{align*}
S T &= ST e_2 + ST f_2 \\
& = -[ \Phi_2(y-y^{\tau_3}) \otimes 1]   \cdot e_2  \cdot f_1  \cdot [ \Phi_1(y-y^{\tau_3}) \otimes 1 ]  \cdot e_2\\
& \quad  -[ \Phi_2(y-y^{\tau_3}) \otimes 1]  \cdot  f_2  \cdot  e_1  \cdot  [ \Phi_1(y-y^{\tau_3}) \otimes 1 ] \cdot  f_2 \\
& =   -[ \Phi_2(y-y^{\tau_3}) \otimes 1]  \cdot  ( e_2 f_1+f_2e_1)  \cdot  [ \Phi_1(y-y^{\tau_3}) \otimes 1]   \cdot ( e_1 f_2+ f_1 e_2) .
\end{align*}
Combining this with  \eqref{resultant 3} shows that  
\[
R^4=  \mathrm{Nrd} (f_2 e_1 + e_2 f_1) \cdot \mathrm{Nrd} (f_1 e_2 + e_1 f_2),
\]
and combining this with \eqref{toCSA} proves the proposition.
\end{proof}


\subsection{Cycles on a formal module}
\label{ss:linear cycles}


We now use the calculations of \S \ref{ss:height} to compute the intersection multiplicity of two  cycles  on the formal scheme
\begin{equation*}
\mathbf{X} \iso \Spf (  \kk [[  x_1,\ldots, x_{2h}  ]] ).
\end{equation*}

For $i\in\{1 , 2\}$, we use  the quasi-isogeny $\Delta_i$ of \eqref{full splitting} to define
\begin{equation}\label{quasi-cycle}
f_i  \in \Hom_{\co_F} (   \mathbf{Y}_i  , \mathbf{X}    )  [1/\pi] 
\end{equation}
as the composition
\begin{equation*}
\mathbf{Y}_i \map{  y\mapsto (y,0) }   \mathbf{Y}_i \times \overline{\mathbf{Y}}_i \map{ \Delta_i } \mathbf{X}.
\end{equation*} 
Choosing $k_i \in \Z$  large enough that $\pi^{k_i}$ clears the denominator in \eqref{quasi-cycle}, we obtain  finite morphisms
 \begin{equation}\label{formal cycles}
 \xymatrix{
 {  \mathbf{Y}_1 } \ar[dr]_{ \pi^{k_1}  f_1 } &  &  { \mathbf{Y}_2  } \ar[dl]^{  \pi^{k_2}  f_2  }   \\
 & {  \mathbf{X}  }.
 }
 \end{equation}
 
 \begin{remark}
If $\phi_i$ and $\psi_i$ are chosen as in Remark \ref{rem:simplePhiPsi} then one can take $k_i=0$, and the resulting maps $f_i :\mathbf{Y}_i \to \mathbf{X}$ are closed immersions.
 \end{remark}
 
 We wish to compute the intersection multiplicity  of $\mathbf{Y}_1$ and $\mathbf{Y}_2$, viewed as codimension $h$ cycles  on  $\mathbf{X}$.
 This is, by definition,  the dimension of the $\kk$-vector space of global sections of the $\co_\mathbf{X}$-module tensor product of
\begin{equation*}
 \mathcal{F}_1 = (  \pi^{k_1}  f_1 )_* \co_{\mathbf{Y}_1}   \quad  \mbox{and}\quad
  \mathcal{F}_2 = (  \pi^{k_2}  f_2)_* \co_{\mathbf{Y}_2}.
\end{equation*}
Of course the sheaf $\mathcal{F}_i$  depends on the choices of $k_i$, $\phi_i$, and $\psi_i$,  but we suppress this from the notation.
  The following theorem gives an explicit formula for this dimension  in terms of the resultant  \eqref{resultant}.

\begin{theorem}\label{thm:formal intersection}
Recall that  $q=|\pi| ^{-1}$ is the cardinality of the residue field of $\co_F$, and that $d_1,d_2\in \co_F$ generate the discriminants of $K_1/F$ and $K_2/F$.  The intersection multiplicity of the cycles \eqref{formal cycles} is 
\[
\dim_{\kk}  H^0\big( \mathbf{X},   \mathcal{F}_1    \otimes   \mathcal{F}_2 \big)   
 =     \frac{   q^{ 2 h^2 (k_1+k_2)  }    \cdot    q^{   h \cdot  [ \mathrm{ht}(\phi_1) +  \mathrm{ht}(\phi_2 )-\mathrm{ht}(\psi_1) -  \mathrm{ht}(\psi_2 )  ]  }} {   |R |   \cdot   | d_1d_2|^{ h^2  / 2 }     }   .
\]
In particular,  the left hand side is finite if and only if $|R| \neq 0$.
\end{theorem}

\begin{proof}
The proof will follow easily from the height calculations of \S \ref{ss:height} and the following  lemma, which shows that  $ f_i$ admits a particularly nice factorization
\begin{equation*}
\mathbf{Y}_i \map{A_i  } \mathbf{Z}_i \hookrightarrow \mathbf{X}
\end{equation*}
as a quasi-isogeny of formal $\co_F$-modules followed by a  closed immersion.

\begin{lemma}
There is a (non-unique) decomposition
$\mathbf{X} = \mathbf{Z}_i \times \overline{\mathbf{Z}}_i$ of formal $\co_F$-modules such that the quasi-isogeny $\Delta_i $ has the form
 \begin{equation*}
   \begin{pmatrix}  
   A_i & B_i \\ 0 & D_i 
   \end{pmatrix}  :
   \mathbf{Y}_i\times \overline{\mathbf{Y}}_i \to      \mathbf{Z}_i \times \overline{\mathbf{Z}}_i   
  \end{equation*}
for some 
 \begin{align*}
A_i & \in \Hom_{\co_F}( \mathbf{Y}_i , \mathbf{Z}_i  ) [1/\pi] \\
B_i & \in \Hom_{\co_F}(   \overline{\mathbf{Y}}_i  ,   \mathbf{Z}_i   ) [1/\pi] \\
D_i & \in \Hom_{\co_F}(  \overline{\mathbf{Y}}_i   , \overline{\mathbf{Z}}_i   ) [1/\pi] .
\end{align*}
\end{lemma}

\begin{proof}
 Let $\overline{\mathscr{N}}_i = \mathscr{N}_i$, but endowed with its conjugate $\co_{K_i}$ action.
 As $\End_{\co_F}(\mathscr{G})$ is the unique maximal order in $B(\mathscr{G})$, the map $\Phi_i$ of \eqref{four embeddings} restricts to 
$
\Phi_i : \co_{K_i} \to \End_{\co_F}(\mathscr{G}).
$
Let $\mathscr{G}_i = \mathscr{G}$ endowed with this action of $\co_{K_i}$.
Tracing through the definitions, we have canonical identifications of quasi-isogenies
 \begin{equation*}
 \xymatrix{
  {\mathbf{Y}_i \times \overline{\mathbf{Y}}_i } \ar@{=}[rr]   \ar[d]_{ \Delta_i} 
  & &
   {  \Hom_{\co_{K_i}} (     \mathscr{N}_i \times \overline{\mathscr{N}}_i  ,   \mathscr{H}_i   ) } 
   \ar[d]^{  x\mapsto  \phi_i \circ x \circ \gamma_i  }  \\ 
 { \mathbf{X} } \ar@{=}[rr]  
 & &
    {  \Hom_{\co_{K_i} } (   \co_{K_i} \otimes_{\co_F} \mathscr{M} , \mathscr{G}_i  )   , }    
 }
 \end{equation*}
where $\gamma_i$ is the ${K_i}$-linear composition 
\begin{equation*}
{K_i} \otimes_{F} \mathscr{M}[1/\pi] \map{ \mathrm{id} \otimes \psi_i^{-1} } {K_i}\otimes_{F} \mathscr{N}_i [1/\pi]
\map{  x\otimes n \mapsto ( xn , x^{\sigma_i}n) } \mathscr{N}_i[1/\pi] \times \overline{\mathscr{N}}_i [1/\pi].
\end{equation*}
Using the Iwasawa decomposition in $\GL_{2h}(K_i)$, we may find a decomposition of $\co_{K_i}$-modules
\[
\co_{K_i} \otimes_{\co_F} \mathscr{M}  \iso \mathscr{P} \times \mathscr{Q}
\]
such that $\gamma_i( \mathscr{Q}[1/\pi]) \subset \overline{\mathscr{N}}_i[1/\pi]$.
This induces a decomposition of the lower right corner of the diagram, and the induced decomposition of the lower left corner has the desired properties.
\end{proof}

The isogeny $ \pi^{k_i}A_i  : \mathbf{Y}_i \to \mathbf{Z}_i$ is a finite flat  morphism of  degree
\begin{equation*}
e_i \define q^{  2h^2 k_i   +  \mathrm{ht}(A_i)  }.
\end{equation*}
Thus  $(  \pi^{k_i}  f_i)_* \co_{\mathbf{Y}_i} \iso \co_{\mathbf{Z}_i}^{e_i}$ as $\co_{\mathbf{X}}$-modules, and 
\begin{equation*}
 \mathcal{F}_1    \otimes  \mathcal{F}_2
   \iso (  \co_{\mathbf{Z}_1} \otimes \co_{\mathbf{Z}_2}  )^{ e_1e_2 }.
\end{equation*}
The tensor product on the right  is the structure sheaf of 
\begin{equation*}
\mathbf{Z}_1 \times_{ \mathbf{X}} \mathbf{Z}_2 
= \mathrm{ker}(  \mathbf{Z}_1 \to \mathbf{X} \to \overline{\mathbf{Z}}_2),
\end{equation*}
which, by the definition of height,  is the formal spectrum of  a   $\kk$-algebra of dimension $q^{ \mathrm{ht}( \mathbf{Z}_1\to \mathbf{X} \to \overline{\mathbf{Z}}_2 )}$.  Thus
\begin{eqnarray*}
\dim_{\kk}  H^0\big( \mathbf{X},   \mathcal{F}_1   \otimes   \mathcal{F}_2 \big)   
& = &
e_1 e_2 \cdot \dim_{\kk}  H^0( \mathbf{X},  \co_{\mathbf{Z}_1} \otimes \co_{\mathbf{Z}_2}    )  \\
& = & q^{  2h^2 (k_1+k_2 )  }\cdot q^{     \mathrm{ht}(A_1)  +  \mathrm{ht}(A_2)  }
  \cdot q^{ \mathrm{ht}( \mathbf{Z}_1\to \mathbf{X} \to \overline{\mathbf{Z}}_2 )}    .
\end{eqnarray*}

The composition
\begin{equation*}
 \mathbf{Y}_1 \map{A_1}\mathbf{Z}_1\to  \mathbf{X} \to \overline{\mathbf{Z}}_2  \map{D_2^{-1}} \overline{\mathbf{Y}}_2
\end{equation*}
is precisely the map $\alpha_{12}$ of \eqref{matrix coefficients 1}, and so 
\begin{align*}
\mathrm{ht}( \mathbf{Z}_1\to  \mathbf{X} \to \overline{\mathbf{Z}}_2) 
& =   \mathrm{ht}(\alpha_{12}) -   \mathrm{ht}(A_1) + \mathrm{ht}(D_2)   \\
&= \mathrm{ht}(\alpha_{12}) -   \mathrm{ht}(A_1) - \mathrm{ht}(A_2) + \mathrm{ht}(\Delta_2).
\end{align*}
This leaves us with
\begin{equation}\label{half degree 1}
\dim_{\kk}  H^0\big( \mathbf{X},   \mathcal{F}_1   \otimes   \mathcal{F}_2 \big)  
= q^{  2h^2 (k_1+k_2)    }
q^{   \mathrm{ht}(\alpha_{12})  + \mathrm{ht}(\Delta_2)  }.
\end{equation}
As  the same reasoning holds with the indices $1$ and $2$  reversed throughout, 
\begin{equation}\label{half degree 2}
\dim_{\kk}  H^0\big( \mathbf{X},   \mathcal{F}_1   \otimes   \mathcal{F}_2 \big)  
= q^{  2h^2 (k_1+k_2)    }
q^{   \mathrm{ht}(\alpha_{21})  + \mathrm{ht}(\Delta_1)  }.
\end{equation}

Multiplying \eqref{half degree 1} and \eqref{half degree 2} together and using Proposition \ref{prop:heights 2} 
yields 
\begin{align*}
\dim_{\kk}  H^0\big( \mathbf{X},   \mathcal{F}_1   \otimes   \mathcal{F}_2 \big)  
&  = q^{  2h^2 (k_1+k_2)    }
q^{  \frac{ \mathrm{ht}(\alpha_{12}) + \mathrm{ht}( \alpha_{21}) + \mathrm{ht}(\Delta_1) + \mathrm{ht}(\Delta_2) }{2} } \\
& = 
q^{  2h^2 (k_1+k_2)    }
q^{  \frac{ \ord_F(R^2)  + \mathrm{ht}(\Delta_1) +  \mathrm{ht}(\Delta_2) }{2} } \\
& = 
 | R |^{-1}   \cdot q^{  2h^2 (k_1+k_2)    }
q^{  \frac{   \mathrm{ht}(\Delta_1) + \mathrm{ht}(\Delta_2) }{2} }.
\end{align*}
Theorem \ref{thm:formal intersection} now follows from the formulas for $ \mathrm{ht}(\Delta_1)$ and  $\mathrm{ht}(\Delta_2)$ found in Proposition \ref{prop:heights 1}. 
 \end{proof}


\subsection{Cycles on the Lubin-Tate tower}
\label{ss:LT cycles}


Assume now that the elements \eqref{cycle input} are chosen as in Remark \ref{rem:simplePhiPsi}.  In other words, for $i\in \{1,2\}$ we fix $\co_F$-linear isomorphisms
\[
\phi_i : \mathscr{H}_i \iso \mathscr{G} \quad \mbox{and}\quad \psi_i : \mathscr{N}_i \iso \mathscr{M}.
\]

Let $\mathrm{Nilp}( \co_{\breve{F}} )$ be the category of $\co_{\breve{F}} $-schemes on which the uniformizer $\pi\in \co_F$ is locally nilpotent.  For any $S\in \mathrm{Nilp}(\co_{\breve{F}} )$ we abbreviate
  \begin{equation*}
\bar{S}  = S \times_{ \Spec(\co_{\breve{F}} ) } \Spec(\kk).
\end{equation*}

 Associated to the pair $( \mathscr{G} , \mathscr{M})$ and an integer $m\ge 0$ one has the Lubin-Tate deformation space 
\begin{equation*}
X(\pi^m)   \to \Spf( \co_{\breve{F}}  )
\end{equation*}
 classifying triples  $(G,\varrho, t_m)$ over $S \in \mathrm{Nilp}(\co_{\breve{F}} )$ consisting of
   \begin{itemize}
 \item
a formal $\co_F$-module  $G$  over $S$,
 \item
an $\co_F$-linear quasi-isogeny   $\varrho : \mathscr{G}_{\bar{S}} \to G_{  \bar{S}}$ of height $0$,
  \item
an $\co_F$-linear  Drinfeld level structure   $t_m :  \pi^{-m} \mathscr{M}/\mathscr{M}  \to G [\pi^m]$.
  \end{itemize}

For each $i\in \{1,2\}$ the above isomorphisms $\phi_i$ and $\psi_i$  determine a closed formal subscheme 
\begin{equation}\label{CM with level}
f_i(m) : Y_i(\pi^m) \hookrightarrow X(\pi^m),
\end{equation}
defined as the locus of points $(G,\varrho, t_m)$ for which 
there exists a (necessarily unique) action  $\co_{K_i} \to \End_{\co_F}(G)$ making both 
\[
    \mathscr{H}_{i,\overline{S}} \map{\phi_i} \mathscr{G}_{\overline{S}}
 \map{\varrho}     G_{\overline{S}}  
\]
and
\[
\pi^{-m} \mathscr{N}_i / \mathscr{N}_i  \map{\psi_i} \pi^{-m} \mathscr{M} / \mathscr{M} 
  \map{t_m} G[\pi^m]
\]
  $\co_{K_i}$-linear.

We think of $Y_1(\pi^m)$ and $Y_2(\pi^m)$ as cycles on $X(\pi^m)$.
 Their intersection multiplicity is, by definition, the  length of the $\co_{\breve{F}}$-module of global sections of the tensor product of coherent $\co_{X(\pi^m)}$-modules
  \begin{equation*}
\mathcal{F}_1( \pi^m)  = f_1(m)_*\co_{ Y_1(\pi^m) } \quad\mbox{and}\quad
\mathcal{F}_2( \pi^m)  =   f_2(m)_*\co_{ Y_2(\pi^m) }  .  
 \end{equation*}

\begin{proposition}\label{prop:high level}
If $|R|\neq 0$, then 
\[
\mathrm{len}_{\co_{\breve{F}} }\, H^0 \big(  X(\pi^m) ,  \mathcal{F}_1( \pi^m)  \otimes  \mathcal{F}_2( \pi^m) \big)  
 =     |R |^{-1}   \cdot    | d_1d_2|^{ - h^2  /2  }  
\]
for all $m\gg 0$.   
\end{proposition}

\begin{proof}
Because we have now chosen the data \eqref{cycle input} as in Remark \ref{rem:simplePhiPsi}, we may take $k_1=k_2=0$ throughout \S \ref{ss:linear cycles}.
Theorem \ref{thm:formal intersection} implies that the $\kk$-vector space $H^0( \mathbf{X},  \mathcal{F}_1 \otimes \mathcal{F}_2 )$ has finite dimension, and so \cite[Theorem 4.1]{Li} implies that 
there is an isomorphism of $\co_{\breve{F}} $-modules
\begin{equation*}
H^0 \big(  X (\pi^m) ,  \mathcal{F}_1( \pi^m)  \otimes \mathcal{F}_2( \pi^m) \big) \iso H^0( \mathbf{X},  \mathcal{F}_1 \otimes \mathcal{F}_2 )
\end{equation*}
for all $m\gg 0$.  Now use  the equality  of Theorem \ref{thm:formal intersection}.
\end{proof}

For any $g\in B (\mathscr{M})^\times$, we can replace the embedding $\Psi_2 : K_2 \to B(\mathscr{M})$ with its conjugate $g \Psi_2 g^{-1}$ in \eqref{resultant}  to define
\begin{equation}\label{moving resultant}
R (g) =R(\Phi_1,\Phi_2,\Psi_1, g \Psi_2  g^{-1} ) \in K_3.
\end{equation}
As in \S \ref{ss:cycle data}, $R(g)^2 \in F$ and we abbreviate 
\[
|R(g)| = \sqrt{ |R(g)^2| }.
\]
If  $(\Phi_1,\Phi_2)$ is regular semisimple,  then $|R(g)|\neq 0$  by Proposition \ref{prop:good resultant}.

\begin{theorem}\label{thm:final intersection}
Suppose  $m\ge 0$, and set 
\begin{equation*}
U(\pi^m)= \mathrm{ker} \big( \Aut_{\co_F} (\mathscr{M}) \to  \Aut_{\co_F} (\mathscr{M}/\pi^m \mathscr{M}) \big)
\subset B(\mathscr{M})^\times .
\end{equation*}
If  the pair $(\Phi_1,\Phi_2)$ is regular semisimple,  then 
\begin{align*}
\mathrm{len}_{\co_{\breve{F}}} \, H^0 \big( X (\pi^m) ,  \mathcal{F}_1(  \pi^m)  \otimes \mathcal{F}_2( \pi^m) \big)  
=
   \frac{ c(m) \cdot   | d_1d_2| ^{ -h^2/2}    }{ \mathrm{vol}(  U(\pi^m))}
 \int_{ U(\pi^m) }  \frac{ dg} {  | R(g) |  }  .
\end{align*}
In particular, the left hand side is finite.
Here we have defined  $c(m)=1$ if $m>0$, and
\begin{equation*}
c(0) = \frac{ \# \Aut_{\co_F}(\mathscr{M}/\pi \mathscr{M}) }{ \# \Aut_{\co_{K_1}}(\mathscr{N}_1/\pi \mathscr{N}_1)  \cdot \# \Aut_{\co_{K_2}}(\mathscr{N}_2/\pi \mathscr{N}_2 )  }.
\end{equation*}
\end{theorem}

\begin{proof}
This follows from Proposition \ref{prop:high level}, exactly as in  the proof of  \cite[Proposition 6.6]{Li}.
\end{proof}


\subsection{The central derivative conjecture}
\label{ss:AFL}


We change the setup slightly from \S \ref{ss:cycle data}.   Fix $\co_F$-linear isomorphisms
\[
\phi_1 : \mathscr{H}_1 \iso \mathscr{G}  ,\qquad 
\phi_2: \mathscr{H}_2 \iso \mathscr{G}
 \]
such that the induced $F$-algebra embeddings 
\[
\Phi_1 : K_1 \to B(\mathscr{G}) ,\qquad \Phi_2 : K_2 \to B(\mathscr{G}) 
\]
form a regular semisimple pair $(\Phi_1,\Phi_2)$.  

Suppose we are also given $F$-algebra embeddings
\[
\Phi_0 : K_0 \to M_{2h}(F) ,\qquad \Phi_3 : K_3 \to M_{2h}(F)
\]
such  that the pair   $(\Phi_0,\Phi_3)$ is  regular semisimple and matches   $(\Phi_1,\Phi_2)$ in the sense  of Definition \ref{def:matching}.
As $B(\mathscr{G})$ is a division algebra, Proposition \ref{prop:orbital vanish} implies that 
\[
O_{(\Phi_0,\Phi_3)} (f; 0 ,\eta) =0 
\]
 for all $f$ as in \eqref{test function}.   In particular, this holds when $f$ is  the characteristic function of $\GL_{2h}(\co_F)$, which we denote by 
 $
 \mathbf{1} : \GL_{2h}(F) \to \C .
 $
 
 We now consider a modified version of the constructions of \S \ref{ss:LT cycles}, in which no Drinfeld level structure is added, but the quasi-isogeny $\varrho$ in the moduli problem  is  allowed to be of arbitrary height.

 Let  $X^\bullet$  be the formal scheme over $\co_{\breve{F}} $ classifying pairs $(G,\varrho)$ consisting of a formal $\co_F$-module  $G$  over  $S \in \mathrm{Nilp}(\co_{\breve{F}} )$  and  an $\co_F$-linear quasi-isogeny   $\varrho : \mathscr{G}_{\bar{S}} \to G_{  \bar{S}}$.  For every $\ell \in \Z$ the open and closed formal subscheme
 \[
 X^{(\ell)} \subset X^\bullet
 \]
 defined by $\mathrm{ht}(\varrho)=\ell$ is isomorphic to the formal spectrum of a power series ring in $2h-1$ variables over $\co_{\breve{F}}$.

For  $i\in \{1,2\}$, let $\breve{K}_i$ be the  completion of the maximal unramified extension of  $K_i$.  Consider the formal $\co_{\breve{K}_i}$-scheme $Y_i^\bullet$ classifying pairs $(H,\varrho)$ consisting of a formal $\co_{K_i}$-module  $H$  over  $S \in \mathrm{Nilp}(\co_{\breve{K}_i} )$  and  an $\co_{K_i}$-linear quasi-isogeny   $\varrho : \mathscr{H}_{i, \bar{S}} \to H_{  \bar{S}}$. 
Denote by 
\begin{equation}\label{CM cycle with components}
f_i : Y_i^\bullet \to X^\bullet
\end{equation}
the morphism of $\co_{\breve{F}}$-schemes
 sending $(H,\varrho)$ to the formal $\co_F$-module  $G=H$ endowed with the quasi-isogeny
\[
 \mathscr{G}_{\bar{S}} \map{\phi_1^{-1}}  \mathscr{H}_{i,\bar{S}} \map{\varrho} H_{  \bar{S}}  = G_{\bar{S}} .
\]
The restriction of $Y_i^\bullet$ to the connected component $X^{(\ell)}$ is denoted
\[
Y_i^{(\ell)} = Y_i^\bullet \times_{X^\bullet} X^{(\ell)}.
\]
Note that some $Y_i^{(\ell)}$ may be empty.

\begin{proposition}
The morphism \eqref{CM cycle with components} is a  closed immersion.
Moreover, the pair   $Y_i^{(0)} \subset  X^{(0)}$  is canonically identified with the pair
$Y_i(\pi^m) \subset X(\pi^m)$ from the previous subsection determined by $m=0$.
\end{proposition}

\begin{proof}
Similar to \eqref{CM with level}, we may define a closed formal subscheme 
\[
Y^\bullet_{i , \mathrm{naive}} \subset X^\bullet
\]
  as the locus of   points $(G,\varrho)$ in $X^\bullet$ for which 
there exists a (necessarily unique) action  $\co_{K_i} \to \End_{\co_F}(G)$ making  
\[
    \mathscr{H}_{i,\overline{S}} \map{\phi_i} \mathscr{G}_{\overline{S}}
 \map{\varrho}     G_{\overline{S}}  
\]
  $\co_{K_i}$-linear.   Essentially by definition, the morphism \eqref{CM cycle with components} factors through this closed formal subscheme.

  The ring $\co_{K_i}$ acts on the Lie algebra of the universal object over $Y^\bullet_{i , \mathrm{naive}}$.
  As this Lie algebra is a line bundle on $Y^\bullet_{i , \mathrm{naive}}$, this action must be   through some $\co_F$-algebra morphism
  \[
  \co_{K_i} \to \co_{ Y^\bullet_{i , \mathrm{naive}} } .
  \]
This morphism endows $Y^\bullet_{i , \mathrm{naive}}$ with the structure of a formal scheme not just over $\co_{\breve{F}}$, but over $\co_{K_i} \otimes_{\co_F} \co_{\breve{F}}$.
One can check (this is essentially  the strictness condition implicit in the definition of $Y_i^\bullet$, as in Remark \ref{rem:strict}) that the diagram 
\[
\xymatrix{
{ Y_i^\bullet  }  \ar[r]^{f_i} \ar[d]  & {   Y^\bullet_{i , \mathrm{naive}}  } \ar[d]   \\
{  \mathrm{Spf}(\co_{\breve{K}_i} )  } \ar[r]  & {  \mathrm{Spf}( \co_{K_i} \otimes_{\co_F} \co_{\breve{F}}   )    } 
}
\]
is cartesian, where the bottom horizontal arrow is induced by the map $\co_{K_i} \otimes_{\co_F} \co_{\breve{F}}   \to  \co_{\breve{K}_i} $ sending $\alpha \otimes x\mapsto \alpha x$.
As this bottom arrow is a closed immersion, so is the top one,  and hence so is \eqref{CM cycle with components}.

If  $K_i/F$ is ramified, the bottom horizontal arrow in the diagram is an isomorphism, and hence so is the top horizontal arrow.
On the other hand, if $K_i/F$ is unramified then 
\[
\mathrm{Spf}(\co_{\breve{K}_i} )  \to   \mathrm{Spf}( \co_{K_i} \otimes_{\co_F} \co_{\breve{F}}   ) \iso 
\mathrm{Spf}(\co_{\breve{F}} ) \sqcup \mathrm{Spf}(\co_{\breve{F}} )
\]
is an isomorphism onto  one of the two components, and the top horizontal arrow is an open and closed immersion.

When $m=0$ the equality  $X(\pi^m)=X^{(0)}$ holds simply by definition, and similarly for
\[
Y_i(\pi^m) = Y_{i,\mathrm{naive}}^\bullet \otimes_{ X^\bullet } X^{(0)}. 
\]
The previous paragraph shows that
\[
Y_i^{(0)} \subset Y_{i,\mathrm{naive}}^\bullet \otimes_{ X^\bullet } X^{(0)}
\]
as a union of connected components.   As the underlying reduced scheme of $X^{(0)}$ is a point, \emph{any} closed formal subscheme of it is connected.  In particular the right hand side of the above inclusion is connected.
The formal scheme  $Y_i^{(0)}$ is nonempty, as it contains the $\kk$-valued point $(\mathscr{H}_i,\mathrm{id})$, and so the  inclusion is an equality.
\end{proof}

We would like to relate the derivative  of  $O_{(\Phi_0,\Phi_3)} (f; s ,\eta) $  at $s=0$   to the intersection multiplicity of the cycles $Y_1^\bullet$ and $Y_2^\bullet$ on $X^\bullet$, but this is not defined because $Y_1^\bullet \times_{X^\bullet} Y_2^\bullet$ is an infinite disjoint union of Artinian schemes.  
We must carefully take connected components into account.

Let $L \subset B(\mathscr{G})$ be the the centralizer of $F$-subalgebra generated by $\Phi_1(K_1) \cup \Phi_2(K_2)$.
Recall from Proposition  \ref{prop:central algebra} that it is an  \'etale $F$-algebra of dimension $h$.
The group $B(\mathscr{G})^\times$ acts on $X^\bullet$ by changing the quasi-isogeny $\varrho$ in the moduli problem, and the action of the subgroup $L^\times$ preserves each of the closed subschemes $Y_i^\bullet \subset X^\bullet$.

\begin{conjecture}[Arithmetic  biquadratic  fundamental lemma]\label{conj:ABFL} 
If $K/K_3$ is unramified then 
\begin{align*}
&   \frac{ \pm 1}{ \log(q)  }\frac{d}{ds} O_{(\Phi_0,\Phi_3)} ( \mathbf{1} ; s ,\eta)  \big|_{s=0}     \\
& =
\sum_{  X^{(\ell)} \in L^\times \backslash \pi_0( X^\bullet)  } 
\mathrm{len}_{\co_{\breve{F}} }\, H^0 \big( X^{(\ell)}   ,  f_{ 1*} \co_{ Y^{(\ell)}_1 } \otimes_{\co_{X^{(\ell)}}} f_{ 2*} \co_{ Y^{(\ell)}_1 }  \big)  ,
\end{align*}
where the sum is over a set of representatives $X^{(\ell)}$ for the $L^\times$ orbits of connected components of $X^\bullet$.
We remark that there are only finitely many such orbits, as  there are  $2h$ orbits under the action of the subgroup $F^\times$. 
 \end{conjecture}

\begin{remark}\label{rem:AFLequivalence}
In the special case where $K_1\iso K_2$, Conjecture \ref{conj:ABFL} is equivalent to the linear arithmetic 
fundamental lemma of \cite[Conjecture 1]{Li} (which should be corrected to incorporate all connected components of $X^\bullet$, as we have done above).   This equivalence uses Proposition \ref{prop:split orbital comparison} below, as the orbital integrals $O_{(\Phi_0,\Phi_3)}( f; s,\eta)$ defined here do not quite agree with the orbital integrals of \cite{Li}.
\end{remark}


\section{Calculations when $h=1$}
\label{one case}


Assume that $h=1$.  We will prove  Conjecture  \ref{conj:BFL}  when $f=\mathbf{1}$ is the characteristic function of $\GL_2(\co_F)$,  and also prove Conjecture \ref{conj:ABFL}.
Throughout, we assume that $F$ is a local field, and that $K_1$ and $K_2$ are quadratic \'etale extensions such that 
$K/K_3$ is unramified.


\subsection{Preliminaries}
\label{ss:h=1 prelim}


In  \S \ref{ss:h=1 prelim} we assume that  $K_1$ and $K_2$ are fields, with $K_1/F$  unramified and $K_2/F$  ramified.
In particular, $K=K_1\otimes_F K_2$ is a biquadratic field extension of $F$.
Fix $\co_F$-algebra generators $x_1 \in \co_{K_1}$ and $x_2 \in \co_{K_2}$ with
\[
\ord_{K_1}(x_1)=0, \qquad \ord_{K_2}(x_2)=1.
\]

Let $B$ be a central simple $F$-algebra of dimension $4$.  
Thus $B$ is either the algebra $M_2(F)$, or  the unique quaternion division algebra over $F$.
Fix $F$-algebra embeddings
\begin{align*}
\Phi_1 : K_1 \to  B, \qquad  \Phi_2 : K_2 \to  B .
\end{align*}
As in \eqref{w def} and \eqref{z def}, define elements of $B$ by 
\begin{align*}
\mathbf{w}  & =  \Phi_1(x_1)\Phi_2(x_2)+\Phi_2(x_2^{\sigma_2})\Phi_1(x_1^{\sigma_1})   \\
\mathbf{z}  & =  \Phi_1(x_1)\Phi_2(x_2)-\Phi_2(x_2 )\Phi_1(x_1)  .
\end{align*}
Similarly, let $\mathbf{s} \in C=B\otimes_F K_3$  be  as in \eqref{better s},

\begin{proposition}\label{prop:h=1 invariant}
We have  $\mathbf{s}  \in K_3^\times$,   and  $\Inv_{(\Phi_1,\Phi_2)}(T) = T - \mathbf{s}$.  
\end{proposition}

\begin{proof}
  Recall from \S \ref{ss:polynomial} that $\Inv_{(\Phi_1,\Phi_2)}(T)  \in K_3 [T]$  is monic of degree $h=1$, and satisfies $\Inv_{(\Phi_1,\Phi_2)}( \mathbf{s}) =0$.   Hence $\mathbf{s} \in K_3$ and 
\[
\Inv_{(\Phi_1,\Phi_2)}(T) = T - \mathbf{s}.
\]
The relation $\mathbf{s}+\mathbf{s}^{\sigma_3}=1$ of   Proposition \ref{prop:st properties} shows that $\mathbf{s} \neq 0$.  As our assumptions on $K_1$ and $K_2$ imply that $K_3$ is a field, we have $\mathbf{s} \in K_3^\times$.
\end{proof}

\begin{corollary}
The pair  $(\Phi_1,\Phi_2)$ is  regular semisimple.
\end{corollary}

\begin{proof}
Use Proposition \ref{prop:h=1 invariant} and the criterion of Proposition \ref{prop:rss}.  
\end{proof}

\begin{lemma}\label{lem:quadratic simplified}
We have $\mathbf{w} , \mathbf{z}^2 \in F$, and 
\begin{equation}\label{z quadratic}
\mathbf{z}^2 = \mathbf{w}^2 + \pi \cdot(  a    \mathbf{w} + b )
\end{equation}
for some  $a,b\in \co_F$ satisfying  $\ord(a) \ge \ord(b)=0$.   Moreover,  $\mathbf{z} \in B^\times$.
\end{lemma}

\begin{proof}
Consider the element 
\begin{equation}\label{preord estimate}
\frac{\mathrm{Tr}(x_1^2)}{\mathrm{Nm}(x_1)} + \frac{\mathrm{Tr}(x_2^2)}{\mathrm{Nm}(x_2)} 
=   \frac{ ( x_1-x_1^{\sigma_1})^2 }{x_1 x_1^{\sigma_1}}   +  \frac{ ( x_2+x_2^{\sigma_2})^2 }{x_2 x_2^{\sigma_2}} . 
\end{equation}
The first term on the right hand side lies in  $\co^\times_F$, while 
\[
\ord_{K_2}(x_2+x_2^{\sigma_2}) = 2 \ord_F (  x_2+x_2^{\sigma_2}  )  > 1 = \ord_{K_2}(x_2)
\]
implies that the   second term on the right lies in $\pi \co_F$.  Thus \eqref{preord estimate}  lies in $\co_F^\times$.
Combining this with the relation 
\[
\mathrm{Tr}(x_1^2)\mathrm{Nm}(x_2)+\mathrm{Tr}(x_2^2)\mathrm{Nm}(x_1) 
= \mathrm{Nm}(x_2)\mathrm{Nm}(x_1)\left(\frac{\mathrm{Tr}(x_1^2)}{\mathrm{Nm}(x_1)} + \frac{\mathrm{Tr}(x_2^2)}{\mathrm{Nm}(x_2)} \right),
\]
we find that 
\[
\ord_F\big(  \mathrm{Tr}(x_1^2)\mathrm{Nm}(x_2)+\mathrm{Tr}(x_2^2)\mathrm{Nm}(x_1)  \big)  =1.
\]
Combining  this with Proposition \ref{prop:wz properties}  shows that \eqref{z quadratic} holds with
\[
a = -  \frac{ \mathrm{Tr}(x_1)\mathrm{Tr}(x_2) }{\pi} , \qquad b= \frac{\mathrm{Tr}(x_1^2)\mathrm{Nm}(x_2)+\mathrm{Tr}(x_2^2)\mathrm{Nm}(x_1)}{\pi}.
\]

Recall from Proposition \ref{prop:wz properties} that $\mathbf{w}$ commutes with both $\Phi(K_1)$ and $\Phi(K_2)$.
Each of these subalgebras  is equal to its own centralizer in $B$, and hence $\mathbf{w} \in \Phi(K_1) \cap \Phi(K_2) =F$.  
The inclusion $\mathbf{z}^2 \in F$ follows from this and \eqref{z quadratic}. 
 For the final claim, Proposition \ref{prop:h=1 invariant} tells us that  $\mathbf{s}\in K_3^\times$, hence $\mathbf{t} \in C^\times$ by Proposition \ref{prop:st properties}, hence $\mathbf{z} \in B^\times$ by Proposition \ref{prop:alt st}.
\end{proof}

\begin{lemma}\label{lem:quaternion valuation}
If $B$ is a matrix algebra then  $\ord_F(\mathbf{z}^2)$ is even. 
 If $B$ is a division algebra then  $\ord_F(\mathbf{z}^2)$ is odd.
\end{lemma}

\begin{proof}
The essential point is the relation 
$
\mathbf{z}   \Phi_1(x) = \Phi_1(x^{\sigma_1}) \mathbf{z}
$
of Proposition \ref{prop:wz properties}.
If $\ord_F(\mathbf{z}^2)$ is even then, as $K_1/F$ is unramified, there is an $x\in K_1$ such that 
$xx^{\sigma_1} = \mathbf{z}^2$.  This implies 
\[
( \Phi_1(x) - \mathbf{z}) ( \Phi_1(x^{\sigma_1}) + \mathbf{z}) =0 ,
\]
and so $B\iso M_2(F)$.  
Conversely, if $B\iso M_2(F)$ then pick any nonzero $v \in F^2$.  
The embedding $\Phi_1:K_1 \to M_2(F)$ makes $F^2$ into a $K_1$-vector space of dimension $1$, and so
$\mathbf{z}   v = \Phi_1(x)  v$ for some $x\in K_1$.  This implies 
\[
\mathbf{z}^2  v = \mathbf{z} \Phi_1(x)   v = \Phi_1(x^{\sigma_1}) \mathbf{z}   v = \Phi_1(x x^{\sigma_1})  v,
\]
and so $\mathbf{z}^2 = x x^{\sigma_1}$.  Thus $\ord_F(\mathbf{z}^2)$ is even.
\end{proof}


\subsection{Calculation of an orbital integral}
\label{ss:h=1 orbital calc}


We keep the notation and assumptions of the previous subsection.  
In particular, we continue to assume that $K_1/F$ is unramified, while $K_2/F$ is ramified.

If we view  $x_1\in \co_{K_1}$ and $x_2\in \co_{K_2}$ as elements of $K$,  then
\[
x_3=x_1x_2^{\sigma_2}+x_1^{\sigma_1}x_2 \in K_3
\]
generates $K_3$ as an $F$-algebra.  
In fact, it is easy to see that 
\[
\ord_{K_3}(x_3) =1
\]
and hence  $\co_{K_3} = \co_F[x_3]$, as $K_3/F$ is ramified.
Recalling that $K_0 = F\times F$,  define 
\[
\Phi_0  :   K_0 \to M_2(F) ,\qquad 
\Phi_3 :  K_3 \to M_2(F)
\]
by 
\[
\Phi_0(a,b)=\begin{pmatrix}b&0\\0&a\end{pmatrix}, 
\qquad
\Phi_3(x_3)=\begin{pmatrix}\mathbf{w}&1\\\ \mathbf{m} &\mathrm{Tr}(x_3)-\mathbf w \end{pmatrix},
\]
where $\mathbf{m} = \mathbf{w}   \cdot (\mathrm{Tr}(x_3)-\mathbf{w})-\mathrm{Nm}(x_3)\in F.$

\begin{lemma}
The pair  $(\Phi_0,\Phi_3)$ matches $(\Phi_1,\Phi_2)$.
\end{lemma}

\begin{proof}
Set  $x_0=(0,1)\in K_0$, and let 
\[
\mathbf{w}'   =  \Phi_0(x_0)\Phi_3(x_3)+\Phi_3(x_3^{\sigma_3})\Phi_0(x_0^{\sigma_0})  \in M_2(F)
\]
be the element associated to the pair $(\Phi_0,\Phi_3)$ by \eqref{w def}.
Using 
\[
\Phi_0(x_0)\Phi_3(x_3)
=\begin{pmatrix}\mathbf{w}&1\\0&0\end{pmatrix},
\qquad
\Phi_3(x_3^{\sigma_3})\Phi_0(x_0^{\sigma_0})
=\begin{pmatrix} 0&-1\\0&\mathbf{w} \end{pmatrix}
\]
we see that $\mathbf{w}'=\mathbf{w}$. 

Recalling Proposition \ref{prop:alt st}, consider the elements 
\[
\mathbf{s}   = \frac{-(x_1x_2^{\sigma_2}+x_2x_1^{\sigma_1})+  \mathbf{w}  }{(x_1-x_1^{\sigma_1})(x_2-x_2^{\sigma_2})} ,\qquad 
\mathbf{s}'   = \frac{-(x_0x_3^{\sigma_3}+x_3x_0^{\sigma_0})+  \mathbf{w}'  }{(x_0-x_0^{\sigma_0})(x_3-x_3^{\sigma_3})}
\]	
of $K_3 \subset M_2(K_3)$ associated to the pairs $(\Phi_1,\Phi_2)$ and $(\Phi_0,\Phi_3)$.

Somewhat confusingly, $\mathbf{s}$ and $\mathbf{s}'$ are viewed as elements of the rightmost copies of $K_3$ in the diagrams
\eqref{biquadratic} and \eqref{biquadratic2}, which we identify.
In particular,  one must be mindful of the conventions explained after \eqref{biquadratic2}.
If we identify $K_0$ and $K_3$ as subalgebras of $K_3 \times K_3$ via 
$x_0 \mapsto (0,1)$ and $x_3 \mapsto (x_3,x_3)$, then
\[
x_0x_3^{\sigma_3} + x_3 x_0^{\sigma_0} = (x_3  , x_3^{\sigma_3} ) \in K_3 \times K_3
\] 
is identified with the element $x_3=x_1x_2^{\sigma_2}+x_1^{\sigma_1}x_2$ in the \emph{other} copy of $K_3$ in the diagram \eqref{biquadratic2}.
In other words, 
\[
x_0x_3^{\sigma_3} + x_3 x_0^{\sigma_0}  = x_1x_2^{\sigma_2}+x_1^{\sigma_1}x_2. 
\]
Similarly,
\[
(x_0-x_0^{\sigma_0})(x_3-x_3^{\sigma_3})
= (  x_3^{\sigma_3}-x_3 ,  x_3 - x_3^{\sigma_3}   ) \in K_3 \times K_3 
\]
is identified with $x_3^{\sigma_3}-x_3 = (x_1-x_1^{\sigma_1})(x_2-x_2^{\sigma_2})$ in the  rightmost copy of $K_3$ in the diagram \eqref{biquadratic2}.  In other words,
\[
(x_0-x_0^{\sigma_0})(x_3-x_3^{\sigma_3}) = (x_1-x_1^{\sigma_1})(x_2-x_2^{\sigma_2}).
\]

Having now shown that  $\mathbf{s}=\mathbf{s}'$, the pairs $(\Phi_1,\Phi_2)$ and $(\Phi_0,\Phi_3)$ match.
\end{proof}

\begin{proposition}
If $\mathbf{1}$ is the characteristic function of $\GL_2(\co_F)$, then
\[
O_{(\Phi_0,\Phi_3)} ( \mathbf{1} ; s ,\eta )  = 
\begin{cases}
 1  & \mbox{if }  \ord_F(\mathbf{w}) =0 \\
 1 - q^{-s} & \mbox{if }  \ord_F(\mathbf{w}) >0 \\
 0 & \mbox{if }\ord_F(\mathbf{w}) <0.
\end{cases}
\]
Note that when $\ord_F(\mathbf{w}) \ge 0$,  both $\Phi_0(\co_{K_0})$ and $\Phi_3(\co_{K_3})$ are contained in $M_2(\co_F)$.  This allows us to take  $g_0=g_3=1$ in \eqref{integrality shift 03}, and so 
 remove the ambiguity in the orbital integral noted in Remark \ref{rem:orbital invariant}.
\end{proposition}

\begin{proof}

In the notation of \eqref{03 centralizers}, we have  $H_0= \Phi_0(K_0^\times)$ and $H_3=\Phi_3(K_3^\times)$.
As  $K_3/F$ is ramified, and $x_3\in K_3$ is a uniformizing parameter,  we have 
\[
F^\times\backslash K_3^\times 
=  \mathcal{O}_F^\times\backslash  (\mathcal{O}_{K_3}^\times\sqcup x_3 \mathcal{O}_{K_3}^\times ).
\]
Choosing a $g_3 \in \GL_2(F)$ such that  
\[
\Phi_3(\co_{K_3}) \subset g_3 M_2(\co_F) g_3^{-1} ,
\]
 the orbital integral of Definition \ref{def:orbital 03}  simplifies to 
\begin{align}
O_{(\Phi_0,\Phi_3)}(\mathbf 1;s, \eta) 
&=
 \int_{F^\times\backslash (F^\times\times F^\times \times K_3^\times) } 
 \mathbf{1}(\Phi_0(a,b)^{-1}\Phi_3(x) g_3 )\eta(x)   |a/b|^s \, da\, db\, dx     \nonumber  \\
&=
  \int_{F^\times\times F^\times\times\mathcal{O}_{K_3}^\times}
\mathbf{1} (\Phi_0(a,b)^{-1}\Phi_3(x) g_3 )   | a / b |^s\, da\, db\,  dx   \nonumber  \\ 
& \quad -\int_{F^\times\times F^\times\times  \mathcal{O}_{K_3}^\times}
\mathbf{1}(\Phi_0(a,b)^{-1}\Phi_3( x_3 x) g_3 ) | a/b |^s \, da\, db\, dx \nonumber  \\
&=
  \int_{F^\times\times F^\times }
\mathbf{1} (\Phi_0(a,b)^{-1} g_3 )   | a / b |^s\, da\, db    \nonumber  \\ 
& \quad -\int_{F^\times\times F^\times }
\mathbf{1}(\Phi_0(a,b)^{-1}\Phi_3( x_3 ) g_3 ) | a/b |^s \, da\, db  .
 \label{h=1 orbital decomp}
\end{align}

If $\ord_F(\mathbf{w}) \ge 0$ then we may take $g_3=1$, and compute
\[
\int_{F^\times\times F^\times }
\mathbf{1} (\Phi_0(a,b)^{-1}  )   | a/b | ^s\, da\, db  =1 .
\]
If  $\ord_F(\mathbf{w}) =0$ then 
\[
\Phi_3(x_3) \in   \begin{pmatrix}    1 & 0 \\ u & \pi \end{pmatrix} \GL_2(\co_F)
\]
for some $u \in \co_F^\times$, and one easily checks that 
\[
\int_{F^\times\times F^\times }
\mathbf{1}(\Phi_0(a,b)^{-1}\Phi_3( x_3)) | a/b |^s \, da\, db 
= 0 .
\]
If $\ord_F(\mathbf{w}) >0$ then we instead have 
\[
\Phi_3(x_3) \in   \begin{pmatrix}    1 & 0 \\ 0 & \pi \end{pmatrix} \GL_2(\co_F),
\]
and one easily checks that 
\[
\int_{F^\times\times F^\times }
\mathbf{1}(\Phi_0(a,b)^{-1}\Phi_3( x_3)) | a/b |^s \, da\, db
= 
q^{-s}.
\]
Combining these calculations with \eqref{h=1 orbital decomp} proves the claim for $\ord_F(\mathbf{w}) \ge 0$.

Now suppose  $\ord_F(\mathbf{w}) <0$. In this case we may choose
\[
g_3 = \begin{pmatrix}
1 & \mathbf{w} \\ 0 & \mathbf{m}
\end{pmatrix} \in M_2(F).
\]
Direct calculation shows that 
\[
 g_3 \not\in \Phi_0(a,b) \GL_2(\co_F),\qquad  \Phi_3( x_3 ) g_3 \not\in \Phi_0(a,b) \GL_2(\co_F)
\]
for all $a,b\in F^\times$, and so \eqref{h=1 orbital decomp} vanishes.
\end{proof}


\subsection{Central values} 


In this subsection we let $K_1$ and $K_2$ be any quadratic \'etale $F$-algebras, and fix $F$-algebra embeddings
\begin{align*}
\Phi_0 &: K_0 \to M_2(F), &  \Phi_1&: K_1 \to M_2(F) \\
\Phi_3 &: K_3 \to M_2(F), &   \Phi_2&: K_2 \to M_2(F) .
\end{align*}
 
 \begin{theorem}
If  $(\Phi_0,\Phi_3)$ and $(\Phi_1,\Phi_2)$ are  regular semisimple and matching,  then
 \begin{equation*}
 \pm O_{(\Phi_0,\Phi_3)} ( \mathbf{1} ; 0,\eta) =   O_{(\Phi_1,\Phi_2)} ( \mathbf{1}) .
\end{equation*}
 \end{theorem}

\begin{proof}
If either one of $K_1$ or $K_2$ is the split algebra $F\times F$, the claim is known by Proposition \ref{prop:split FL}.
If $K_1$ and $K_2$ are both unramified field extensions, then $K_1\iso K_2$ and the result is known by work of Guo (Remark \ref{rem:FLequivalence}).
Our assumption that $K/K_3$ is unramified excludes the possibility that  $K_1$ and $K_2$ are both ramified field extensions.

This leaves us with the case in which  $K_1$ and $K_2$ are both fields, and exactly one of them is ramified over $F$.
Under this assumption we will prove a more precise statement: 
 if there is a maximal order in $M_2(F)$ that contains both $\Phi_1(\co_{K_1})$ and $\Phi(\co_{K_2} )$, then both sides of the desired equality are equal to $1$.
If no such maximal order exists, then both sides are $0$.

Without loss of generality,  assume that $K_1$ is unramified and $K_2$ is ramified.
Let $x_1\in K_1$, $x_2\in K_2$,  $\mathbf{w} \in F$ and $\mathbf{z} \in M_2(F)$ be as in \S \ref{ss:h=1 prelim}.

Assume there is a maximal order in $M_2(F)$ that contains both $\Phi_1(\co_{K_1})$ and $\Phi(\co_{K_2} )$.
In particular  $\ord_F(\mathbf{w}) \ge 0$, 
and the calculations of \S \ref{ss:h=1 orbital calc} that
\[
\pm O_{(\Phi_0,\Phi_3)} ( \mathbf{1} ; 0 ,\eta )  = 1.
\]
After conjugating $(\Phi_1,\Phi_2)$ by an element of $\GL_2(\co_F)$, we may assume that this maximal order is $M_2(\co_F)$, and the orbital integral of Definition \ref{def:orbital 12}  simplifies to  
\begin{align*}
O_{(\Phi_1,\Phi_2)}( \mathbf{1} ) 
 & = \int_{  F^\times \backslash (K_1^\times \times K_2^\times)  } \mathbf{1} (  \Phi_1(h_1^{-1})  \Phi_2(h_2)   ) \, dh_1\, dh_2  \\
  & = \int_{  (\co_F^\times \backslash \co_{K_1}^\times)  \times K_2^\times   } \mathbf{1} (  \Phi_1(h_1^{-1})  \Phi_2(h_2)   ) \, dh_1\, dh_2  \\
   & = \int_{   K_2^\times   } \mathbf{1} (    \Phi_2(h_2)   ) \,  dh_2  \\
   & = 1.
\end{align*}

Now suppose there is no maximal order in $M_2(F)$ containing both $\Phi_1(\co_{K_1})$ and $\Phi_2(\co_{K_2} )$.
Replacing the pair $(\Phi_1,\Phi_2)$ by a conjugate, we may assume that $\Phi_1(\co_{K_1}) \subset M_2(\co_F)$, and then 
\begin{align}
O_{(\Phi_1,\Phi_2)}( \mathbf{1} ) 
& = \int_{  F^\times \backslash ( K_1^\times \times K_2^\times)   }  \mathbf{1} 
\big(  \Phi_1(h_1^{-1})  \Phi_2(h_2)  g_2 \big) \, dh_1\, dh_2    \nonumber \\
& = \int_{  ( \co_F^\times \backslash \co_{ K_1}^\times ) \times K_2^\times   }  \mathbf{1} 
\big(  \Phi_1(h_1^{-1})  \Phi_2(h_2)  g_2 \big) \, dh_1\, dh_2   \nonumber  \\
& = \int_{  K_2^\times   }  \mathbf{1} 
\big(   \Phi_2(h_2)  g_2 \big) \, dh_2   \label{no lattice orbital}
\end{align}
for any $g_2\in \GL_2(F)$ such that 
\[
\Phi_2( \co_{K_2})  \subset   g_2  M_2(\co_F)  g_2^{-1}.
\]
Using the fact that $K_2/F$ is ramified, we may scale $g_2$ by an element of $\Phi_2(K_2^\times)$ to assume that 
$\ord_F(\det(g_2)) =0$. 

If \eqref{no lattice orbital} is nonzero, there there is some  $h_2\in K_2^\times$ such that 
\[
\Phi_2(h_2)  g_2 \co_F^2 = \co_F^2.
\]
On the other hand, $\Phi_2(h_2) \in g_2 M_2(\co_F) g_2^{-1}$, satisfies
\[
\Phi_2(h_2)  g_2 \co_F^2 \subset   g_2 \co_F^2,
\]
and equality holds as
\[
\ord_F (  \det(  \Phi_2(h_2)) ) =  \ord_F ( \det(  \Phi_2(h_2) g_2  )  )=0
\]
Thus $ \co_F^2 =  g_2 \co_F^2$, which implies  $g_2 \in \GL_2(\co_F)$, which  implies 
$\Phi_2(\co_{K_2}) \subset M_2(\co_F)$.
This  contradicts  our hypothesis on the pair $(\Phi_1,\Phi_2)$, and we conclude that  \eqref{no lattice orbital} is equal to $0$.

Still assuming there is no maximal order in $M_2(F)$ containing both $\Phi_1(\co_{K_1})$ and $\Phi_2(\co_{K_2} )$,
we claim that 
\begin{equation}\label{w pole}
\ord_F(\mathbf{w}) <0.
\end{equation}
Indeed, if $\ord_F(\mathbf{w}) \ge 0$ then Lemma \ref{lem:quadratic simplified} and the relation 
\[  \mathbf{z} \cdot \Phi_1(x_1)    =\Phi_1(x_1^{\sigma_1})   \cdot \mathbf{z}\]
of Proposition \ref{prop:wz properties} imply that 
\[
\mathrm{Span}_{\co_F} \{ 1,  \mathbf{z} , \Phi_1(x_1)  ,\mathbf{z}  \Phi_1(x_1)    \} \subset M_2(F)
\]
is an $\co_F$-subalgebra.
It follows that there is a maximal order $R \subset M_2(F)$ that contains $\mathbf{w}$,  $\mathbf{z}$, and all of $\Phi_1(\co_{K_1})$.
Our assumption that  $K_1/F$ is unramified implies   $x_1 - x_1^{\sigma_1} \in \co_F^\times$, and so the relations
 \begin{align*}
\mathbf{w} - \mathbf{z} 
&= \Phi_2(x_2^{\sigma_2}) \Phi_1(x_1^{\sigma_1}) + \Phi_2(x_2)\Phi_1(x_1)  \\
&=  \mathrm{Tr}(x_2) \Phi_1(x_1^{\sigma_1})    + \Phi_2(x_2)   \Phi_1( x_1 - x_1^{\sigma_1})    
\end{align*}
imply  $\Phi_2(x_2) \in R$.  Thus $R$ contains both $\Phi_1(\co_{K_1})$ and $\Phi_2(\co_{K_2})$, contrary to our hypotheses.
Hence  \eqref{w pole} holds, and the vanishing of $O_{(\Phi_0,\Phi_3)} ( \mathbf{1} ; 0 ,\eta )$ follows from 
the calculations of \S \ref{ss:h=1 orbital calc}. 
\end{proof}


\subsection{Central derivatives}


Now return to the setting of \S \ref{ss:AFL}, with $h=1$.  Thus $\mathscr{G}$ is formal $\co_F$-module over $\kk$ of height $2$,  and we are given $F$-algebra embeddings
\[
\Phi_1 : K_1 \to B(\mathscr{G}) ,\qquad \Phi_2 : K_2 \to B(\mathscr{G}) 
\]
forming a regular semisimple pair $(\Phi_1,\Phi_2)$, with corresponding closed immersions   
$f_1: Y^\bullet_1 \to X^\bullet$ and  $f_2: Y^\bullet_2 \to X^\bullet$ of formal schemes over $\co_{\breve{F}}$. 

The \'etale $F$-algebra $L$ appearing in Conjecture \ref{conj:ABFL}, being of dimension $h=1$, is just $F$ itself.
This allows us to take $X^{(0)}$ and $X^{(1)}$ as representatives for the $L^\times$ orbits of $ \pi_0(X^\bullet)$,  and Conjecture \ref{conj:ABFL} is a consequence of the following result.

\begin{theorem}
Given $F$-algebra embeddings
\[
\Phi_0 : K_0 \to M_2(F)  ,\qquad \Phi_3 : K_3 \to M_2(F) 
\]
with $(\Phi_0,\Phi_3)$ matching $(\Phi_1,\Phi_2)$, we have
\[
   \frac{ \pm 1}{ \log(q)  }\frac{d}{ds} O_{(\Phi_0,\Phi_3)} ( \mathbf{1} ; s ,\eta)  \big|_{s=0}    
=
\mathrm{len}_{\co_{\breve{F}}} \, H^0 \big( X^{(0)}   ,  f_{ 1*} \co_{ Y^{(0)}_1 }  \otimes_{\co_{X^{(0)}}} f_{ 2*} \co_{ Y^{(0)}_2 }  \big)  
\]
and
\[
H^0 \big( X^{(1)}   ,  f_{ 1*} \co_{ Y^{(1)}_1 } \otimes_{\co_{X^{(1)}}} f_{ 2*} \co_{ Y^{(1)}_2 }  \big) =0.
\]
\end{theorem}

\begin{proof}
Note that $B(\mathscr{G})$ is a quaternion division algebra over $F$, and so  $K_1$ and $K_2$ are fields.
If $K_i/F$ is unramified, then any quasi-isogeny of formal $\co_{K_i}$-modules has even height when viewed as a quasi-isogeny of underlying formal $\co_F$-modules.  It follows that the image of
\[
f_i : Y_i^\bullet \to X^\bullet
\]
only meets those $X^{(\ell)}$ with $\ell$ even.
We deduce that if either  of $K_1$ or $K_2$ is unramified then  
\[
 f_{ 1*} \co_{ Y^{(1)}_1 }  \otimes_{\co_{X^{(1)}}}f_{ 2*} \co_{ Y^{(1)}_2 } =0.
 \]

If $K_1$ and $K_2$ are both unramified over $F$, then $K_1\iso K_2$ and the claim follows from the calculations of 
 \S 7 of \cite{Li}. 
   Our assumption that $K/K_3$ is unramified excludes the possibility that $K_1$ and $K_2$ are both ramified, and so it only remains to consider the case in which one of $K_1$ and $K_2$ is ramified and the other is unramified.  We will prove that in this case both sides of the first  equality in the theorem are equal to $1$.

Without loss of generality we may assume that $K_1$ is unramified and $K_2$ is ramified.
Let $x_1\in \co_{K_1}$ and $x_2\in \co_{K_2}$ be as in \S \ref{ss:h=1 prelim}, and let
\begin{align*}
\mathbf{w}  & =  \Phi_1(x_1)\Phi_2(x_2)+\Phi_2(x_2^{\sigma_2})\Phi_1(x_1^{\sigma_1})   \\
\mathbf{z}  & =  \Phi_1(x_1)\Phi_2(x_2)-\Phi_2(x_2 )\Phi_1(x_1)  
\end{align*}
be the corresponding elements of $B(\mathscr{G})$.  The unique maximal order of $B(\mathscr{G})$ must contain both $\Phi_1(\co_{K_1})$ and $\Phi_2(\co_{K_2})$, and so also contains $\mathbf{w}$ and $\mathbf{z}$.
By Lemma \ref{lem:quadratic simplified}, we therefore have $\mathbf{w} \in \co_F$ and $\mathbf{z}^2 \in \co_F$,
and 
\[
\mathbf{z}^2 = \mathbf{w}^2 + \pi \cdot(  a    \mathbf{w} + b )
\]
for some  $a,b\in \co_F$ satisfying  $\ord(a) \ge \ord(b)=0$. 
  If $\ord_F(\mathbf{w}) =0$, this would imply $\ord_F(\mathbf{z}^2) =0$, contradicting  Lemma \ref{lem:quaternion valuation}.  Therefore
\begin{equation}\label{w ord}
\ord_F(\mathbf{w}) >0,
\end{equation}
and the results of \S \ref{ss:h=1 orbital calc} imply 
\begin{equation}\label{h=1 derivative}
   \frac{ \pm 1}{ \log(q)  }\frac{d}{ds} O_{(\Phi_0,\Phi_3)} ( \mathbf{1} ; s ,\eta)  \big|_{s=0}  =1.
\end{equation}

Now fix $F$-algebra embeddings 
\[
\Psi_1 : K_1 \to M_2(F) ,\qquad \Psi_2: K_2 \to M_2(F)
\]
satisfying $\Psi_i(\co_{K_i}) \subset M_2(\co_F)$. 
As in \eqref{moving resultant},  for any $g\in \GL_2(\co_F)$ let 
\[
R(g) = \mathrm{Res} \big( \Inv_{(\Phi_1,\Phi_2)} , \Inv_{(\Psi_1, g \Psi_2 g^{-1} )}  \big)   \in K_3 .
\]

We claim that 
\begin{equation}\label{easy resultant}
\ord_{K_3}( R(g) ) = -\ord_F(d_1 d_2),
\end{equation}
where $d_i \in \co_F$  generates  the discriminant of $K_i/F$.  (Of course $d_1$ is a unit.)
Define  elements of $M_2(\co_F)$ by
\begin{align*}
\mathbf{w}'  & =  \Psi_1(x_1) \Psi_2(x_2)+\Psi_2(x_2^{\sigma_2})\Psi_1(x_1^{\sigma_1})   \\
\mathbf{z}'  & =  \Psi_1(x_1) \Psi_2(x_2)-\Psi_2(x_2 )\Psi_1(x_1) .
\end{align*} 
If  $\mathbf{s} \in C$ and  $\mathbf{s}' \in M_2(K_3)$ denote the elements 
  constructed from  $(\Phi_1,\Phi_2)$ and $(\Psi_1,\Psi_2)$  as in \eqref{better s}, then
Propositions \ref{prop:h=1 invariant} and   \ref{prop:alt st} imply 
\begin{equation}\label{explicit res}
R(1)=\mathrm{Res}( T- \mathbf{s}  , T- \mathbf{s}'  ) = \mathbf{s} -\mathbf{s}' 
= \frac{\mathbf{w}-\mathbf{w}' }{ (x_1-x_1^{\sigma_1})( x_2 - x_2^{\sigma_2} ) }.
\end{equation}
As above, Lemma \ref{lem:quadratic simplified} implies that  $\mathbf{w}' \in \co_F$ and $(\mathbf{z}')^2 \in \co_F$
 satisfy
\[
(\mathbf{z}')^2 = (\mathbf{w}')^2 + \pi \cdot(  a    \mathbf{w}' + b )
\]
for some  $a,b\in \co_F$ with  $\ord(a) \ge \ord(b)=0$.  
If $\ord_F(\mathbf{w}') >0$ then $\ord_F( (\mathbf{z}')^2)=1$, contradicting Lemma \ref{lem:quaternion valuation}.
Hence
$
\ord_F(\mathbf{w}') = 0.
$
Combining  this with \eqref{w ord} and \eqref{explicit res}  shows that
\[
 \ord_{K_3}(R(1))  = - \ord_{K_3} (  (x_1-x_1^{\sigma_1})( x_2 - x_2^{\sigma_2} ) ).
 \]
Elementary calculation shows that  the right hand side is  $-\ord_F(d_1d_2)$, completing the proof of \eqref{easy resultant} when $g=1$.  The proof for general $g\in \GL_2(\co_F)$ proceeds by replacing $\Psi_2$ with $g \Psi_2 g^{-1}$ throughout the argument.

Using \eqref{easy resultant},  the $m=0$ case of Theorem \ref{thm:final intersection} reduces to 
\[
\mathrm{len}_{\co_{\breve{F}}} \, H^0 \big( X^{(0)}   ,  f_{ 1*} \co_{ Y^{(0)}_1 } \otimes_{\co_X} f_{ 2*} \co_{ Y^{(0)}_1 }  \big)
 =
 | d_1d_2| ^{- \frac{1}{2}}  
 \int_{ \GL_2(\co_F)  }  \frac{ dg} {  | R(g) |  }    =  1 ,
\]
and comparison with \eqref{h=1 derivative} completes the proof.
\end{proof}

\appendix


\section{Comparisons with earlier work}


When $K_1=K_2$ our results and conjectures reduce to those of \cite{Li}, but some aspects  of this are not completely obvious.  
In this appendix we provide some results to help guide the reader in the comparison between this paper and \cite{Li}.  

One consequence of the comparison is Proposition \ref{prop:truerss},  which shows that our notion of regular semisimplicity from Definition \ref{def:rss} is equivalent to the more familiar notion from geometric invariant theory.


\subsection{An alternate construction of $\mathbf{s}$}


Return to the setting of \S \ref{ss:polynomial}.  Thus $F$ is an arbitrary field, $B$ is a central simple $F$-algebra of dimension $4h^2$, and we are given $F$-algebra embeddings 
\begin{equation*}
\Phi_1 : K_1 \to B,\qquad  \Phi_2: K_2 \to B 
\end{equation*}
in which each $K_i$ is a quadratic \'etale $F$-algebra.

We provide  a different construction of the element 
\[
\mathbf{s} \in C=B\otimes_F K_3
\]
defined by \eqref{better s}.
This will allow us to compare our invariant polynomials with the invariant polynomials defined (in the special case $K_1= K_2$) in \cite{Li}, and to compare our definition of regular semisimple pair with the more common one from  geometric invariant theory.

 The $K_3$-algebra embeddings $\Phi_1, \Phi_2 : K \to C$  of   \eqref{induced embeddings} are conjugate by Corollary \ref{cor:ns}, and hence there is a  $c\in C^\times$ such that
\begin{equation}\label{c conj}
	\Phi_2(x) = c^{-1} \cdot  \Phi_1(x)  \cdot c.
\end{equation}
There is a  $\Z/2\Z$-grading $C = C_+\oplus C_-$  in which 
\begin{align}
C_+  &= \{ a\in C :  \forall y\in K,\,  \Phi_1 (y) \cdot a  = a  \cdot \Phi_1 (y) \} \label{grading} \\
C_-  &= \{ a\in C :  \forall y\in K,\,  \Phi_1 (y) \cdot a = a \cdot  \Phi_1(y^{\tau_3}) \}. \nonumber
\end{align}
Denote by $c_\pm$ the projection of $c$ to $C_\pm$.

\begin{proposition}\label{prop:second invariant}
The element  $c_+ - c_-\in C$ is invertible, and 
	\begin{equation*} 
		 \mathbf{s} =  (c_+ + c_-)^{-1}  \cdot c_+  \cdot (c_+ - c_-)^{-1} \cdot c_+ .
	\end{equation*}
\end{proposition}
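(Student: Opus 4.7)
The plan is to introduce the $K_3$-linear map $\sigma : C \to C$ defined to be $+1$ on $C_+$ and $-1$ on $C_-$, and to show that $\sigma$ is an $F$-algebra automorphism of $C$. First I would verify that the $\Z/2\Z$-decomposition $C = C_+ \oplus C_-$ is multiplicatively compatible, i.e.\ $C_\varepsilon \cdot C_{\varepsilon'} \subseteq C_{\varepsilon\varepsilon'}$. This is a direct check from \eqref{grading}; for example, if $a,b\in C_-$ then $\Phi_1(y)(ab) = a \Phi_1(y^{\tau_3}) b = ab \Phi_1(y)$ since $\tau_3^2=\mathrm{id}$, so $ab \in C_+$, and the remaining three cases are similar. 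Consequently $\sigma$ is an algebra automorphism, and because $\sigma(c) = c_+ - c_-$, the invertibility of $c \in C^\times$ automatically forces $c_+ - c_- \in C^\times$, proving the first assertion.

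For the formula, I would derive a single linear relation between $c_\pm$ and the elements used to construct $\mathbf{s}$. Set $\alpha = \Phi_1(y) - \Phi_2(y)$, $\beta = \Phi_1(y) - \Phi_2(y^{\tau_3})$, and $\delta = y - y^{\tau_3}$; by \eqref{intertwine 1} we also have $\beta = \Phi_2(y) - \Phi_1(y^{\tau_3})$. Expanding the defining identity $\Phi_1(y) c = c \Phi_2(y)$ via $c = c_+ + c_-$ and using the commutation rules $\Phi_1(y) c_+ = c_+ \Phi_1(y)$ and $\Phi_1(y) c_- = c_- \Phi_1(y^{\tau_3})$ (which are exactly what it means for $c_\pm$ to lie in $C_\pm$) yields the key identity
\[
c_+ \alpha = c_- \beta.
\]
A short calculation also gives $\Phi_1(\delta) = \alpha + \beta$ and $\Phi_2(\delta) = \beta - \alpha$, and both $\Phi_i(\delta) \in C^\times$ because $\Phi_i(\delta)^2 = \delta^2 \in K_3^\times$.

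To finish, I would multiply the key relation on the left first by $c^{-1}$, then by $\sigma(c)^{-1}$, and combine the results. Using $c^{-1} c_+ + c^{-1} c_- = 1$, the relation $c_+ \alpha = c_- \beta$ collapses to the single equation $(c^{-1} c_+)(\alpha + \beta) = \beta$, giving $c^{-1} c_+ = \beta \Phi_1(\delta)^{-1}$. Using instead $\sigma(c)^{-1} c_+ - \sigma(c)^{-1} c_- = 1$, the same relation collapses to $(\sigma(c)^{-1} c_+)(\beta - \alpha) = \beta$, giving $\sigma(c)^{-1} c_+ = \beta \Phi_2(\delta)^{-1}$. Multiplying,
\[
c^{-1} c_+ \, \sigma(c)^{-1} c_+ = \beta \, \Phi_1(\delta)^{-1} \, \beta \, \Phi_2(\delta)^{-1},
\]
and the intertwining $\Phi_1(x) \beta = \beta \Phi_2(x)$ from \eqref{intertwine 2} applied at $x = \delta$ rewrites $\Phi_1(\delta)^{-1} \beta = \beta \Phi_2(\delta)^{-1}$, collapsing the right side to $\beta^2 \Phi_2(\delta)^{-2} = \beta^2 / \delta^2$. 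By \eqref{better s} and \eqref{intertwine 1}, this is exactly $\mathbf{s}$. The only real obstacle is the bookkeeping around the grading: verifying multiplicativity of $C_\varepsilon \cdot C_{\varepsilon'}$ and tracking signs when passing through $\sigma$; once these are in place, the rest is direct linear algebra using the intertwining identities in \eqref{intertwine 1} and \eqref{intertwine 2}.
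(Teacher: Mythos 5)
Your proof is correct and, for the displayed formula, follows essentially the same route as the paper: you derive the same pivotal relation $c_+(\Phi_1(y)-\Phi_2(y))=c_-(\Phi_2(y)-\Phi_1(y^{\tau_3}))$ from $\Phi_1(y)c=c\Phi_2(y)$, obtain the same two identities $(c_++c_-)^{-1}c_+=\beta\,\Phi_1(y-y^{\tau_3})^{-1}$ and $(c_+-c_-)^{-1}c_+=\beta\,\Phi_2(y-y^{\tau_3})^{-1}$, and conclude by multiplying and invoking \eqref{intertwine 2} and \eqref{better s}. The only (harmless) divergence is the invertibility of $c_+-c_-$: you deduce it from the multiplicativity of the grading, i.e.\ that $c\mapsto c_+-c_-$ is an algebra automorphism, whereas the paper conjugates $c$ by $\Phi_1(y)$ for a trace-zero unit $y\in K^\times$; both arguments are valid.
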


\begin{proof}
If $y\in K^\times$ is any element with $y+ y^{\tau_3} =0$,  then 
\begin{equation*}
c_+ - c_-=\Phi_1(y)^{-1} \cdot (c_+ + c_-)  \cdot \Phi_1(y) \in C^\times,
\end{equation*}
proving the first claim.

For the second claim,  fix a $K_3$-algebra generator  $y\in K$. 
The element $c$ was chosen so that  $\Phi_1(y)\cdot (c_++c_-)=(c_++c_-)\cdot \Phi_2(y)$, and therefore
\begin{equation}\label{equation: midstep}
c_+(\Phi_1(y)-\Phi_2(y))=c_-(\Phi_2(y)-\Phi_1(y^{\tau_3})).
\end{equation}
Adding $c_+(\Phi_2(y)-\Phi_1(y^{\tau_3}))$ to both sides of \eqref{equation: midstep}, we find
\[
c_+ \Phi_1(y - y^{\sigma_3}) =(c_++c_-)(\Phi_2(y)-\Phi_1(y^{\tau_3})).
\]
Subtracting  $c_+(\Phi_1(y)-\Phi_2(y^{\tau_3}))$ from  both sides of \eqref{equation: midstep}, we find
\[
	c_+ \Phi_2(y - y^{\tau_3})  =(c_+-c_-)(\Phi_1(y)-\Phi_2(y^{\tau_3})).
\]
Rewrite these two equalities as
\begin{align*}
(c_++c_-)^{-1}  \cdot c_+ &=(\Phi_2(y)-\Phi_1(y^{\tau_3}))   \cdot  \Phi_1(y  - y^{\tau_3})^{-1}     \\
(c_+-c_-)^{-1}   \cdot  c_+  & =(\Phi_1(y)-\Phi_2(y^{\tau_3}))  \cdot   \Phi_2(y -  y^{\tau_3})^{-1}
\end{align*}
to see that $(c_+ + c_-)^{-1}  \cdot c_+  \cdot (c_+ - c_-)^{-1} \cdot c_+$ is equal to
\[
(\Phi_2(y)-\Phi_1(y^{\tau_3}))   \cdot  \Phi_1(y  - y^{\tau_3})^{-1} 
\cdot (\Phi_1(y)-\Phi_2(y^{\tau_3}))  \cdot   \Phi_2(y -  y^{\tau_3})^{-1}, 
\]
and use  \eqref{intertwine 2} to see that this last expression is equal to  \eqref{better s}.
\end{proof}


\subsection{Invariant polynomials revisited}
\label{ss:alt invariants}


 We now use Proposition \ref{prop:second invariant} to compare our invariant polynomial with the notion of  invariant polynomial from \cite[Definition 1.1]{Li}.
   Let $E$ be a quadratic \'etale $F$-algebra,  fix an $F$-algebra embedding 
   \[
   \Phi : E \to B,
   \]
   and let   $B = B_+ \oplus B_-$ be the corresponding $\Z/2\Z$-grading, as in \eqref{grading}.
 
 \begin{definition}\label{def:old invariant}
 The \emph{invariant polynomial} $M_g$ of $g \in B^\times$, with respect to $\Phi : E \to B$, is the unique monic square root of the reduced characteristic polynomial of 
 \[
\mathbf{s}_g = (g_+ + g_-)^{-1} g_+ (g_+ - g_-)^{-1} g_+ \in B,
\]
where $g_\pm \in B_\pm$ is the projection of $g$.
\end{definition}

 To explain the connection with Definition \ref{def:invariant polynomial},  set $K_1=E$ and $K_2=E$, and define 
\[
 \Phi_1=\Phi :K_1 \to B   \quad\mbox{and}\quad   \Phi_2= g^{-1} \Phi g  :K_2 \to B. 
 \]
 Let $\sigma \in \Aut(E/F)$ be the nontrivial automorphism, and 
 identify 
 \[
 K = K_1 \otimes_F K_2 \iso E\times E
 \]
  via $a\otimes b\mapsto (ab, a b^\sigma)$.
 The subalgebra $K_3 \subset K$ is identified with  $F \times F \subset E\times E$, and so 
 $
 C = B\times B.
 $
The embeddings 
\[
\Phi_1,\Phi_2 : E\times E \to B\times B
\]
 of \eqref{induced embeddings} take the explicit form 
\begin{equation*} 
\Phi_1(a,b) = ( \Phi_1(a) ,\Phi_1(b) ),  \qquad
 \Phi_2(a,b) = (\Phi_2(a) , \Phi_2(b^\sigma) ).
\end{equation*}
Compare with Remark \ref{screwy 1}.

We may choose the element $c$  of \eqref{c conj} in the form 
$
c = (g , g' ) \in B^\times \times B^\times
$
 for some $g' \in B^\times$, and  (by Proposition \ref{prop:second invariant}) the element $\mathbf{s} \in C$ of \eqref{better s} is 
 $
 \mathbf{s} = ( \mathbf{s}_g , \mathbf{s}_{g'} ) .
 $
 What this shows is that the invariant polynomial 
 \[
 \Inv_{( \Phi_1,\Phi_2) } \in K_3[T] = F[T] \times F[T]
 \]
  of Definition \ref{def:invariant polynomial} is  related to that  of Definition \ref{def:old invariant} by  
  \begin{equation}\label{split invariant}
  \Inv_{( \Phi_1,\Phi_2) }  =  ( M_g, M_{g'}).
  \end{equation}
  Note that $M_{g'}(T) = (-1)^h M_g (1 -T)$ by the functional equation of Proposition \ref{prop:functional}.

We now use the above discussion and a result of Guo \cite{Guo} to compare Definition \ref{def:invariant polynomial} with the usual notion of regular semisimple from geometric invariant theory.  For the rest of this subsection we assume that $F$ is algebraically closed.
For $i\in \{1,2\}$ denote by 
\[
X_i = \{ \Phi_i : K_i \to B \}
\]
the set of all $F$-algebra embeddings of $K_i$ into $B$.  There is a natural action of the group $G=B^\times$ on $X_i$ by conjugation, and hence a diagonal action of $G$ on $X_1\times X_2$.

\begin{proposition}\label{prop:truerss}
A  point $(\Phi_1,\Phi_2) \in X_1\times X_2$ is regular semisimple in the sense of Definition \ref{def:invariant polynomial} if and only if its $G$-orbit is Zariski closed  of maximal dimension.
\end{proposition}

\begin{proof}
As we are assuming that $F$ is algebraically closed, we may 
fix  isomorphisms $B\iso M_{2h}(F)$ and  $K_i \iso F\times F$. There is a standard embedding $\Phi  : F \times F \to M_{2h}(F)$ defined by 
\[
\Phi (a,b) = \begin{pmatrix}   a I_h  &  \\  & b I_h  \end{pmatrix},
\]
which determines base point $\Phi  \in X_i$ with stabilizer
\[
H = \left\{
\begin{pmatrix}
A & \\ & B 
\end{pmatrix}  : A,B \in \GL_h(F)
\right\}  \subset \GL_{2h}(F)  \iso G.
\]
Using Corollary \ref{cor:ns} we identify $G/H \iso X_i$ as algebraic varieties (one may take this as the definition of the algebraic structure on $X_i$).

Consider the diagram
\begin{equation}\label{orbit swindle}
\xymatrix{
{   G\backslash (G\times G)   }  \ar[d]_{\iso} &  {  G \times G  } \ar[r] \ar[l]  &   {  G/H \times G/H  }    \ar[d]^{ \iso } \\
{  G  }  &   &  { X_1\times X_2  } 
}
\end{equation}
in which the vertical isomorphism on the left sends $(g_1,g_2) \mapsto g_1^{-1} g_2$.
The group  $H\times H$ acts on $G$ on the right by $  g\cdot (h_1,h_2)  = h_1^{-1} g h_2$.
The group $G$ acts diagonally on $G\times G$ by left multiplication, while $H\times H$ acts by right multiplication.  It is easy to see that the above diagram induces bijections
\[
  \left\{ \begin{array}{cc}  H \times H  \\ \mbox{orbits in } G  \end{array} \right\}  \iso
    \left\{ \begin{array}{cc}  G \times H\times H \\  \mbox{orbits in } G\times G  \end{array} \right\} 
    \iso    \left\{ \begin{array}{cc} G\mbox{ orbits in} \\ X_1\times X_2   \end{array}  \right\}    
\]
sending $g\mapsto (1,g) \mapsto  (\Phi , g \Phi g^{-1}) $.
Under both bijections, closed orbits correspond to closed orbits (use the fact that both horizontal arrows in \eqref{orbit swindle} are smooth, so images of open sets are open).  
  The stabilizers (in $H\times H$, $G\times H\times H$,  and $G$, respectively) of $g\in G$, $(g,1) \in G\times G$ and  $(\Phi , g \Phi g^{-1}) \in X_1\times X_2$ have the same dimension, as  all are isomorphic to $H\cap g Hg^{-1}$. 
  Thus, under these bijections, closed orbits of maximal dimension correspond to closed orbits of maximal dimension.

The closed $H\times H$ orbits in $G$ of maximal dimension were classified by Guo \cite{Guo}, but we follow the  discussion of this classification found in \cite[\S 2]{LM}. 
Guo's result, in the form of  \cite[Lemma 2.2]{LM},  implies that the $H\times H$ orbit of $g\in G$ is closed of maximal  dimension if and only if its invariant polynomial  (in the sense of Definition \ref{def:old invariant}) with respect to $\Phi: F \times F \to M_{2h}(F)$ has $h$ distinct roots, all different from $0$ and $1$.
 Let us denote this invariant polynomial by $M_g(T)$,
and note that it agrees with the polynomial $\mathrm{Inv}'(g,T)$ in \cite[Remark 2.4]{LM}.

Now start with a pair $(\Phi_1,\Phi_2)\in X_1\times X_2$ whose $G$ orbit corresponds under the above bijections to the $H\times H$ orbit of $g\in G$.  By Proposition \ref{prop:rss}, the pair $(\Phi_1,\Phi_2)$ is regular semisimple  if and only if each of the polynomials $M_g(T)$ and $ M_g(1-T)$  in 
\[
\mathrm{Inv}_{(\Phi_1,\Phi_2)} (T)   
\stackrel{ \eqref{split invariant}}{=}  \big( M_g(T) , (-1)^h M_g(1-T) \big) \in F[T] \times F[T]
\]
has $h$ distinct nonzero roots.  This is equivalent to $M_g(T)$ having $h$ distinct roots, all different from $0$ and $1$.  By Guo's result, this last condition   is equivalent to the $H\times H$ orbit of $g\in G$ being closed of maximal dimension, which is equivalent to the $G$ orbit of $(\Phi_1,\Phi_2) \in X_1\times X_2$ having the same property.
\end{proof}


\subsection{The Guo-Jacquet orbital integral}


Here we explain the connection between the orbital integrals of Definition \ref{def:orbital 03} and those defined in \cite{Guo, Li}.  
This is necessary to justify Remarks \ref{rem:FLequivalence} and \ref{rem:AFLequivalence}.
Throughout, $F$ is a local field,  $E$ is either $F\times F$ or an unramified separable quadratic field extension, and 
  \[ \eta_{E/F} : F^\times \to \{ \pm 1\}\] is the associated quadratic character. 
  As in the proof of Proposition \ref{prop:truerss}, 
the centralizer in $\GL_{2h}(F)$ of   the standard embedding  $\Phi : F\times F \to  M_{2h}(F)$   is the subgroup
\[
H = \left\{ \begin{pmatrix} A & \\ & B \end{pmatrix} : A,B \in \GL_h(F) \right\}.
\]

Set  $K_1=E$ and $K_2=E$.  With this choice we have 
\[
K_3= F\times F =K_0, 
\]
and  the character $\eta:K_3^\times \to \{ \pm 1\}$ determined by the quadratic extension $K/K_3$ is 
\[
\eta(a,b) = \eta_{E/F}(ab).
\]
Fix a $g\in \GL_{2h}(F)$, and define 
\[
\Phi_0:K_0 \to M_{2h}(F) \quad \mbox{and}\quad \Phi_3= g \Phi g^{-1} : K_3 \to M_{2h}(F). 
\]
The centralizers in $\GL_{2h}(F)$ of their images are 
\[
H_0=H \quad  \mbox{and} \quad  H_3= g H g^{-1}.
\]

We assume throughout that the pair $(\Phi_0,\Phi_3)$ is regular semisimple in the sense of Definition \ref{def:rss}.  
By \eqref{split invariant} and Proposition \ref{prop:rss}, this is equivalent to the polynomial $M_g(T) \in F[T]$ of Definition \ref{def:old invariant} having $h$ distinct roots, all different from $0$ and $1$.

For  every  compactly supported $f$ as in \eqref{test function}, 
define the \emph{Guo-Jacquet orbital integral}  
\begin{equation}\label{GJorbital}
O_g (f; s, \eta_{E/F} ) = \int_{ I_g    \backslash  ( H \times  H  )} f(  h^{-1} g  h'   )
\cdot | h h' |^s \cdot \eta_{E/F}( h') \, dh \, dh' .
\end{equation}
Here $\eta_{E/F}$ is viewed as a character of $H$ by $\eta_{E/F}(h) = \eta_{E/F}(\det(h))$, the character $|\cdot| : H \to \C^\times$ is defined by
\begin{equation*}
\left| \begin{pmatrix} A & \\ & B \end{pmatrix} \right| 
= \left|  \frac{\det(A)}{ \det(B)} \right| ,
\end{equation*}
  and 
\[
I_g = \{  ( h , h')  \in H \times H  :    h g= g h'  \} .
\]
(Our convention for $|h|$ differs from the one used in  \cite[(1.6)]{Li} by an inverse, so our orbital integral  differs from that one by the substitution $s\mapsto -s$.)

\begin{remark}\label{rem:Ig abelian}
The group $I_g$ is abelian.  In fact, $(h,h') \mapsto h$ defines an  isomorphism $I_g \iso  H_0 \cap H_3$, and 
 the right hand side is the unit group of an \'etale $F$-algebra $L \subset M_{2h}(F)$ of dimension $h$.
 See \eqref{toric intersection}. 
\end{remark}

\begin{remark}\label{GJcharacters}
The integral \eqref{GJorbital} is well-defined because the characters 
\begin{align*}
(h,h') \mapsto \eta_{E/F}(h'), & &   (h,h') \mapsto |h|, & &  (h,h') \mapsto |h'|
\end{align*}
are all trivial on the subgroup $I_g \subset H\times H$.   See Lemma \ref{lem:nocharacters} and the previous remark.
\end{remark}

The Guo-Jacquet orbital integral  \eqref{GJorbital} and the orbital integral of Definition \ref{def:orbital 03} do \emph{not} agree.
Nevertheless, the following holds.

\begin{proposition}\label{prop:split orbital comparison}
We have the equality
\[
O_{(\Phi_0,\Phi_3)} ( f ; 0, \eta )
= 
 O_g (f ; 0, \eta_{E/F} ) .
\]
If  $(\Phi_0,\Phi_3)$ matches a pair $(\Phi_1,\Phi_2)$ of embeddings of $E$ into a division algebra 
 (so that the two sides of the above equality vanish by  Proposition  \ref{prop:orbital vanish}),  then 
\[
\frac{d}{ds}  O_{(\Phi_0,\Phi_3)} ( f ; s , \eta ) \big|_{s=0} 
= 
  \frac{d}{ds}   O_g (f ; 2 s, \eta_{E/F} ) \big|_{s=0} .
\]
\end{proposition}

\begin{proof}
We may choose $g_0 = 1$ and $g_3= g$ in Definition \ref{def:orbital 03}, so that
\begin{equation*}
O_{(\Phi_0,\Phi_3)} (f; s, \eta ) = \int_{ ( H_0\cap H_3) \backslash (H_0\times H_3) } f(  h_0^{-1} h_3 g )
\cdot | h_0 |^s \cdot \eta( h_3) \, dh_0\, dh_3.
\end{equation*}
Making the substitution  $h=h_0$ and $h'=g^{-1} h_3 g$, we find 
\begin{align*}
 O_{(\Phi_0,\Phi_3)} (f; s, \eta )    = \int_{ I_g    \backslash  ( H \times  H  )} f(  h^{-1} g  h'   )
\cdot | h |^s \cdot \eta_{E/F}( h') \, dh \, dh' , 
\end{align*}
exactly as in Remark \ref{rem:classic orbital}.  
This last expression is not equal to the Guo-Jacquet orbital integral
\[
O_g (f; s, \eta_{E/F} ) = \int_{ I_g    \backslash  ( H \times  H  )} f(  h^{-1} g  h'   )
\cdot | h h' |^s \cdot \eta_{E/F}( h') \, dh \, dh' ,
\]
but they  visibly agree at $s=0$, proving the first claim.

From now on we assume that $(\Phi_0,\Phi_3)$ matches a pair $(\Phi_1,\Phi_2)$ of embeddings of $E$ into a division algebra.
Consider the partition $H\times H = \bigsqcup_{m\in \Z} \Omega(m)$ defined by 
\[
\Omega(m) = \{ (h,h') \in H\times H : | h'| = |h|\cdot |\varpi^m| \},
\]
where $\varpi\in F^\times$ is a uniformizing parameter.  
It follows from Remark \ref{GJcharacters} that each $\Omega(m) \subset H\times H$ is stable under both left and right multiplication by the subgroup $I_g$, and clearly
\[
O_g (f; s, \eta_{E/F} ) =
\sum_m 
|\varpi|^{ms}  \int_{ I_g    \backslash  \Omega(m) } f(  h^{-1} g  h'   )
\cdot | h  |^{2s} \cdot \eta_{E/F}( h') \, dh \, dh' .
\]
If we can prove that 
\begin{equation}\label{orbital chunk vanish}
 \int_{ I_g    \backslash  \Omega(m) } f(  h^{-1} g  h'   )
 \cdot \eta_{E/F}( h') \, dh \, dh' =0 
\end{equation}
for all $m\in \Z$, then we are done by
\begin{align*}
&  \frac{d}{ds} O_g (f; s, \eta_{E/F} ) \big|_{s=0} \\
&  =
\sum_m 
\frac{d}{ds} \left[ \int_{ I_g    \backslash  \Omega(m) } f(  h^{-1} g  h'   )
\cdot | h  |^{2s} \cdot \eta_{E/F}( h') \, dh \, dh'  \right]_{s=0} \\
& =  \frac{d}{ds} O_{( \Phi_0,\Phi_3)} (f; 2s, \eta_{E/F} ) \big|_{s=0} .
\end{align*}

We will prove  \eqref{orbital chunk vanish} by imitating the proof of $O_{(\Phi_0,\Phi_3)}(f;0,\eta)=0$ from Proposition  \ref{prop:orbital vanish}.
Let  $\mathbf{z}=\mathbf{z}_{03} \in M_{2h}(F)$ be as in \eqref{z03}, and set
\[
u = \begin{pmatrix} & I_h \\ I_h \end{pmatrix} \in \GL_{2h}(\co_F).
\]
As in the discussion leading to Proposition \ref{prop:functional equation}, if we set $u_0=u$ and $u_3=gu g^{-1}$ then 
$\mathbf{z}u_i\in H_i$.  This implies both  $\mathbf{z}u \in H$ and $g^{-1} \mathbf{z}  g u\in H$, and allows us to define
\[
 \gamma = (\mathbf{z}u , g^{-1}\mathbf{z}u g) \in I_g.
\]
 Because $I_g$ is abelian (Remark \ref{rem:Ig abelian}),   left multiplication by $\gamma$ commutes with the left multiplication action of $I_g$ on $\Omega(m)$.
Making the substitution $(h,h') \mapsto  \gamma  \cdot (h , h' )$   shows that
 \begin{align*}
&  \int_{ I_g    \backslash  \Omega(m) } f(  h^{-1} g  h'   )
 \cdot \eta_{E/F}( h') \, dh \, dh'   \\
&  =
 \eta_{E/F}( \mathbf{z} u )
 \int_{ I_g    \backslash  \Omega(m) } f(  h^{-1}  g h'   )  
 \cdot \eta_{E/F}( h') \, dh \, dh' .
\end{align*}
The equality \eqref{orbital chunk vanish}  follows from this and the relation $\eta_{E/F}( \mathbf{z} u) =-1$ from the proof of Proposition \ref{prop:orbital vanish}.  
  \end{proof}

\end{document}